\definecolor{blue}{rgb}{0,0.0,0.9}
\def\R{{\mathbb R}}
\def\P{{\mathbb P}}
\def\N{{\mathbb N}}
\def\T{T}
\def\TT{{\mathcal T}}
\def\d{{\mathrm d}}
\def\e{{\rm e}}
\def\i{{\rm i}}
\def\tr|{|\!|\!|}
\DeclareMathOperator\Real {Re}
\DeclareMathOperator\diver {div}
\DeclareMathOperator\kernel {ker}
\DeclareMathOperator\supp {supp}
\theoremstyle{plain}
\newtheorem{theorem}{Theorem}[section]
\newtheorem{lemma}{Lemma}[section]
\newtheorem{proposition}{Proposition}[section]
\theoremstyle{definition}
\newtheorem{remark}{Remark}[section]
\numberwithin{equation}{section}
\numberwithin{table}{section}
\numberwithin{figure}{section}
\newcommand{\diff}{\frac{\d}{\d t}}
\def \d {\mathrm{d}}
\newcommand{\nb}{\nabla}
\newcommand{\Om}{\varOmega}
\newcommand{\pa}{\partial}
\def \t {(t)}
\newcommand{\vphi}{\varphi}
\def\half{\frac{1}{2}}
\newcommand\andquad{\quad\hbox{ and }\quad}
\newcommand{\be}{\bm{e}}
\newcommand{\de}{\dot{\be}}
\newcommand{\bd}{\bm{d}}
\newcommand{\bi}{\bm{i}}
\newcommand{\bj}{\bm{j}}
\newcommand{\m}{\bm{m}}
\newcommand{\wm}{\widehat{\bm{m}}}
\newcommand{\ms}{\bm{m}_{\star}}
\newcommand{\dms}{\dot{\bm{m}}_{\star}}
\newcommand{\wms}{\widehat{\bm{m}}_{\star}}
\newcommand{\bq}{\bm{q}}
\newcommand{\br}{\bm{r}}
\newcommand{\bs}{\bm{s}}
\newcommand{\bv}{\bm{v}}
\newcommand{\bw}{\bm{w}}
\newcommand{\bphi}{\bm{\vphi}}
\newcommand{\beh}{\bm{e}_h}
\newcommand{\deh}{\dot{\bm{e}}_h}
\newcommand{\weh}{\widehat{\bm{e}}_h}
\newcommand{\bfEh}{{\mathbf E}_h}
\newcommand{\bdh}{\bm{d}_h}
\newcommand{\mh}{\bm{m}_h}
\newcommand{\dmh}{\dot{\bm{m}}_h}
\newcommand{\wmh}{\widehat{\bm{m}}_h}
\newcommand{\msh}{\bm{m}_{\star,h}}
\newcommand{\dmsh}{\dot{\bm{m}}_{\star,h}}
\newcommand{\wmsh}{\widehat{\bm{m}}_{\star,h}}
\newcommand{\bph}{\bm{p}_h}
\newcommand{\bqh}{\bm{q}_h}
\newcommand{\brh}{\bm{r}_h}
\newcommand{\bphih}{\bm{\vphi}_h}
\newcommand{\bpsih}{\bm{\psi}_h}
\newcommand{\bfE}{{\mathbf E}}
\newcommand{\bophi}{\mathbf{\phi}}
\def\Id{{\mathbf I}}
\def\P{{\mathbf P}}
\def\PPh{\varPi_h}
\def\PPPh{{\mathbf \Pi}_h}
\def\RRh{R_h}
\def\RRRh{{\mathbf R}_h}
\newcommand{\mnull}{\m_h}
\renewcommand{\leq}{\leqslant}
\renewcommand{\geq}{\geqslant}
\begin{document}

	\title[Higher-order discretization of the LLG equation]
	{Higher-order linearly implicit full discretization of  the Landau--Lifshitz--Gilbert equation}

	\author[Georgios Akrivis]{Georgios Akrivis}
	\address{Department of Computer Science \& Engineering, University of Ioannina, 451$\,$10
		Ioannina, Greece, and Institute of Applied and Computational Mathematics, 
		FORTH, 700$\,$13 Heraklion, Crete, Greece} 
	\email {\href{mailto:akrivis@cse.uoi.gr}{akrivis{\it @\,}cse.uoi.gr}} 
	
	\author[Michael Feischl]{Michael Feischl}
	\address{Institute for Analysis and Scientific Computing (E 101), 
		Technical University Wien, Wiedner Hauptstrasse 8-10, 1040 Vienna, Austria}
	\email {\href{mailto:michael.feischl@kit.edu}{michael.feischl{\it @\,}kit.edu}} 
	\email {\href{mailto:michael.feischl@tuwien.ac.at}{michael.feischl{\it @\,}tuwien.ac.at}}

	\author[Bal\'azs Kov\'acs]{Bal\'azs Kov\'acs}
	\address{Mathematisches Institut, Universit\"at T\"ubingen, Auf der Morgenstelle, 
		D-72076 T\"ubingen, Germany}
	\email {\href{mailto:kovacs@na.uni-tuebingen.de}{kovacs{\it @\,}na.uni-tuebingen.de}}

	\author[Christian Lubich]{Christian Lubich}
	\address{Mathematisches Institut, Universit\"at T\"ubingen, Auf der Morgenstelle, 
		D-72076 T\"ubingen, Germany}
	\email {\href{mailto:lubich@na.uni-tuebingen.de}{lubich{\it @\,}na.uni-tuebingen.de}}

	\keywords{BDF methods, non-conforming finite element method,
		Landau--Lifshitz--Gilbert equation, energy technique, stability}
	\subjclass[2010]{Primary 65M12, 65M15; Secondary 65L06.}

	\date{\today}

	\begin{abstract}
		For the Landau--Lifshitz--Gilbert (LLG) equation of micromagnetics
		we study  linearly implicit backward difference 
		formula (BDF) time discretizations up to order $5$ combined with higher-order non-conforming finite element space discretizations, 
		which are based on the weak formulation due to Alouges but use approximate tangent spaces that are defined by  $L^2$-averaged
		instead of nodal orthogonality constraints. 
		We prove stability and optimal-order error bounds in the situation of a sufficiently regular solution.
		For the BDF methods of orders $3$ to~$5$, 
		this requires 
		that the damping parameter in the LLG equations be above a positive threshold;  this condition is  not needed 
		for the A-stable methods of orders $1$ and $2$, for which furthermore
		a discrete energy inequality irrespective of solution regularity is proved.
	\end{abstract}
	
	\maketitle

	\section{Introduction}\label{Sec:intr}

	\subsection{Scope}
	In this paper we study the convergence of higher-order time and space discretizations of the Landau--Lifshitz--Gilbert (LLG) 
	equation, which is the basic model for phenomena in micromagnetism, such as in recording media \cite{GuoD08,Pro01}.

	The main novelty of the paper lies in the construction and analysis of what is apparently the first numerical method for the 
	LLG equation that is second-order convergent in both space and time to sufficiently regular solutions and that satisfies, 
	as an important robustness property irrespective of regularity, a discrete energy inequality analogous to 
	that of the continuous problem.
	
	We study discretization in time by linearly implicit backward difference formulae (BDF) up to order $5$ and discretization 
	in space by  finite elements of arbitrary polynomial degree.
	For the BDF methods up to order $2$ we prove optimal-order error bounds in the situation of a sufficiently 
	regular solution and a discrete energy inequality irrespective of solution regularity under very weak regularity assumptions on the data. 
	For the BDF methods of orders $3$ to $5$, we prove optimal-order error bounds in the situation of a sufficiently regular solution
	under the additional condition that the damping parameter in the LLG equation be above a method-dependent positive 
	threshold. However, no discrete energy inequality irrespective of solution regularity is obtained for the BDF methods of orders $3$ to $5$.

	%
	%
	
	The discretization in space is done by a higher-order non-conforming finite element method based on the approach 
	of Alouges~\cite{Al08,AlJ06}, which uses a projection to an approximate tangent space to the normality constraint. Contrary to the pointwise 
	orthogonality constraints in the nodes, which define the approximate tangent space in those papers and yield only first-order convergence also for finite elements with higher-degree polynomials, we here enforce 
	orthogonality averaged over the finite element basis functions. 
	With these modified approximate tangent spaces we prove $H^1$-convergence of optimal 
	order in space and time under the assumption of a sufficiently regular solution.

	Key issues in the error analysis  are the properties of the orthogonal projection onto the approximate tangent space, 
	the higher-order consistency error analysis, and the proof of stable error propagation, which is based on non-standard 
	energy estimates and uses both $L^2$ and maximum norm finite element analysis.

	\subsection{The Landau--Lifshitz--Gilbert equation}
	
	The standard phenomenological model for micromagnetism is  provided by the Landau--Lifshitz (LL) equation
	\begin{equation}
	\label{LLG1}
	\partial_t \bm{m} =  - \bm{m}\times \bm{H}_{\text{eff}} - \alpha\, \bm{m}\times (\bm{m}\times \bm{H}_{\text{eff}})
	\end{equation}
	where the unknown magnetization field $\bm{m}=\bm{m}(x,t)$ takes values on the unit sphere~$\mathbb{S}^2$, $\alpha>0$ is a dimensionless 
	damping parameter, and the effective magnetic field $\bm{H}_{\text{eff}}$  depends on the unknown $\bm{m}$. 
	The  Landau--Lifshitz equation \eqref{LLG1}
	can be equivalently written in the  Landau--Lifshitz--Gilbert form 
	\begin{equation}
	\label{LLG2}
	\alpha\, \partial_t\bm{m} +\bm{m}\times \partial_t\bm{m}=(1+\alpha^2)\big [\bm{H}_{\text{eff}}-\big (\bm{m}\cdot \bm{H}_{\text{eff}}\big )\bm{m}\big ].
	\end{equation}
	Indeed, in view of the vector identity $\bm{a}\times (\bm{b}\times \bm{c})=(\bm{a}\cdot \bm{c})\bm{b}-(\bm{a}\cdot \bm{b})\bm{c},$
	for $\bm{a}, \bm{b}, \bm{c}\in \R^3,$ we have $-\bm{m}\times \big ( \bm{m}\times \bm{H}_{\text{eff}}\big )=
	\bm{H}_{\text{eff}}-\big (\bm{m}\cdot \bm{H}_{\text{eff}}\big )\bm{m},$
	and taking the vector product of \eqref{LLG1} with $\bm{m}$ and adding $\alpha$ times \eqref{LLG1} then yields  \eqref{LLG2}.
	%

	Since $\bm{m}\times \bm{a}$ is orthogonal to $\bm{m},$ for any  $\bm{a}\in \R^3,$ 
	it is obvious from \eqref{LLG1} that $\partial_t\bm{m}$ is orthogonal to $\bm{m}$:
	$\bm{m}\cdot \partial_t\bm{m}=0;$ we infer that
	the Euclidean norm satisfies $|\bm{m}(x,t)|=1$ for all $x$ and for all $t$, provided this is satisfied for the initial data.

	The term in square brackets on the right-hand side in \eqref{LLG2} can be rewritten as $\P(\bm{m}) \bm{H}_{\text{eff}}$,
	where (with $\Id$ the $3\times 3$ unit matrix)
	\[
	\P(\bm{m})= \Id - \bm{m}\bm{m}^T
	\] 
	is the orthogonal projection onto the tangent plane to the unit sphere $\mathbb{S}^2$ at $\bm{m}$.
	
	In this paper we consider the situation 
	\begin{equation} \label{Heff}
	\bm{H}_{\text{eff}}=\frac1{1+\alpha^2}\bigl( \varDelta \bm{m}+ \bm{H}\bigr), 
	\end{equation}
	where  $\bm{H}=\bm{H}(x,t)$ is a given external magnetic field. The factor $1/(1+\alpha^2)$ is chosen for convenience of presentation, 
	but is inessential for the theory; it can be replaced by any positive constant factor.
	
	With this choice of $\bm{H}_{\text{eff}}$, we arrive at the Landau--Lifshitz--Gilbert (LLG) equation 
	in the form
	\begin{equation}
	\label{llg-projection}
	\alpha\, \partial_t\bm{m} + \bm{m}\times \partial_t\bm{m}=\P(\bm{m})(  \varDelta \bm{m}+\bm{H}).
	\end{equation}
	We consider this equation as an initial-boundary value problem on a bounded domain $ \varOmega\subset \R^3$  and a time interval 
	$0\leqslant t \leqslant  \bar t$, with homogeneous Neumann boundary conditions and initial data $\bm{m}_0$ taking values on the unit 
	sphere, i.e., the Euclidean norm $|\bm{m}_0(x)|$ equals $1$ for all $x\in \varOmega$.
	
	We consider the following
	weak formulation, first proposed by  Alouges \cite{Al08,AlJ06}: Find the solution $\bm{m}: \varOmega\times[0,\bar t\,]\to \mathbb{S}^2$ 
	with $\m(\cdot,0)=\m_0$ by determining, at $\m(t)\in H^1(\Om)^3$, the time derivative $\partial_t \bm{m}$ (omitting here and in 
	the following the argument $t$) as that function  in the tangent space 
	\[
	\T(\bm{m}) :=\big\{ \bm\varphi\in L^2(\varOmega)^3 \,:\, \m\cdot\bm\varphi = 0\ \text{ a.e.} \big\} =
	\big\{ \bm\varphi\in L^2(\varOmega)^3 \,:\, \P(\m)\bm{\varphi} = \bm{\varphi} \}
	\] 
	that satisfies, for all $\bm{\varphi}\in \T(\bm{m})\cap H^1(\varOmega)^3$,
	\begin{equation}
	\label{weak}
	\begin{aligned}
	\alpha \bigl( \partial_t\bm{m},\bm\varphi\bigr) + \bigl(\bm{m}\times \partial_t\bm{m}, \bm{\varphi}\bigr) 
	+  \bigl( \nabla \bm{m}, \nabla \bm{\varphi} \bigr) = \bigl( \bm{H},\bm{\varphi} \bigr), \\
	\end{aligned}
	\end{equation}
	where the brackets $(\cdot,\cdot)$ denote the $L^2$ inner product over the domain $\varOmega$.
	The numerical methods studied in this paper are based on this weak formulation.
	%
	%
	
	\subsection{Previous work}
	There is a rich literature on numerical methods for Landau--Lifshitz(--Gilbert) equations; for the numerical literature 
	up to $2007$ see  the review by Cimr\'ak~\cite{Cim08}.
	
	Alouges \& Jaisson \cite{Al08,AlJ06} propose linear finite element discretizations in space and linearly implicit backward Euler in time for 
	the LLG equation in the weak formulation \eqref{weak} and prove convergence \emph{without} rates  towards nonsmooth weak solutions, 
	using a discrete energy inequality and compactness arguments.  Convergence of this type was previously 
	shown by Bartels \& Prohl \cite{BP} for fully implicit methods that are based on a different formulation of the Landau--Lifshitz equation \eqref{LLG1}. 
	In \cite{AKST14}, convergence without rates towards weak solutions is shown for a method that is (formally) of   ``almost" 
	order~$2$ in time, based on the midpoint rule, for the LLG equation with an effective magnetic field of a more general type than \eqref{Heff}.
	
	In a complementary line of research, convergence \emph{with} rates has been studied under sufficiently strong regularity assumptions, 
	which can, however, not be guaranteed over a given time interval, since solutions of the LLG equation may develop singularities.
	A first-order error bound for a linearly implicit  time discretization of the Landau--Lifshitz equation \eqref{LLG1} was proved by Cimr\'ak  \cite{Cim05}.
	Optimal-order error bounds for  linearly implicit  time discretizations based on the backward Euler and Crank--Nicolson 
	methods combined with finite element full discretizations  for  a different version of the Landau--Lifshitz equation \eqref{LLG1} 
	were obtained under sufficient regularity assumptions by Gao \cite{Gao14} and An \cite{RongAn16}, respectively. 
	In contrast to \cite{Al08,AlJ06,AKST14,BP},  these methods do not satisfy an energy inequality irrespective of the solution regularity.
	
	Numerical discretizations for the coupled system of the LLG equation \eqref{weak} with the eddy current approximation 
	of the Maxwell equations are studied by Feischl \& Tran \cite{FeiT17}, with first-order error bounds in space and time under 
	sufficient regularity assumptions. This also yields the first result of first-order convergence of the method of Alouges \& Jaisson \cite{Al08,AlJ06}.
	%
	%
	
	There are several methods for the LLG equations that are of formal order $2$ in time (though only of order $1$ in space), 
	e.g.,~\cite{PRS18,alouges14,DiFPPRS17}, but none of them comes with an error analysis. 
	Fully implicit BDF time discretizations for LLG equations have been used successfully 
	in the computational physics literature \cite{SueTSetal02}, though without giving any error analysis. 
	
	To the authors' knowledge, the second-order linearly implicit method proposed and studied here is thus the first 
	numerical method for the LLG (or LL) equation that has rigorous a priori error estimates of order $2$ in both space and time under high regularity assumptions and that 
	satisfies a discrete energy inequality irrespective of regularity. 

	We conclude this brief survey of the literature with a remark:
	The existing convergence results either give convergence of a subsequence without rates to a 
	weak solution (without imposing strong regularity assumptions), or they show convergence with rates towards sufficiently regular 
	solutions (as we do here). Both approaches yield insight into the numerical methods and have their merits, and they 
	complement each other. Clearly, neither approach is fully satisfactory, because convergence without rates 
	of some subsequence is nothing to observe in actual computations, and on the other hand 
	high regularity is at best provable for close to constant initial conditions~\cite{FeiT17b} or over short time intervals.
	We regard the situation as analogous to the development of numerical methods and their analysis in other fields such 
	as nonlinear hyperbolic conservation laws: second-order methods are highly popular 
	in that field, even though they can only be shown to converge with very low order ($1/2$ or less or only without rates) for available regularity properties; 
	see, e.g., \cite[Chapter~3]{Kr97}.
	Nevertheless, second-order methods are favored over first-order methods in many applications, especially if they enjoy some 
	qualitative properties that give them robustness in non-regular situations. 
	A similar situation occurs with the LLG equation, where the most important qualitative property appears to be the energy inequality.

	\subsection{Outline}
	
	In Section \ref{Se:LLG-discr} we describe the numerical methods studied in this paper. They use time discretization by linearly implicit 
	BDF methods of orders up to $5$ and space discretization by finite elements of arbitrary polynomial degree in a numerical 
	scheme that is based on the weak formulation \eqref{weak}, with an approximate tangent space that enforces the 
	orthogonality constraint approximately in an $L^2$-projected sense.

	In Section \ref{Se:main-res} we state our main results:\\
	$\bullet\ $  For the full discretization of \eqref{weak} by linearly implicit BDF methods of orders $1$ and $2$ and finite element 
	methods of arbitrary polynomial degree we give optimal-order error bounds in the $H^1$ norm,
	under very mild mesh conditions,  in the case of sufficiently 
	regular solutions (Theorem~\ref{theorem:err-bdf-full - BDF 1 and 2}).
	For these methods we also show a discrete energy inequality that requires only very weak regularity assumptions on the data 
	(Proposition~\ref{prop:energy}). This discrete energy inequality is of the same type as the one used in \cite{AlJ06,BP} 
	for proving convergence without rates to a weak solution.
	\\
	$\bullet\ $ For the linearly implicit BDF methods of orders $3$ to $5$ and finite element methods with polynomial degree at least~$2$, 
	we have
	optimal-order error bounds in the $H^1$ norm only if the damping parameter $\alpha$ is larger than some positive threshold, 
	which depends on the order of the BDF method (Theorem~\ref{theorem:err-bdf-full - BDF 3+}). Moreover, a stronger 
	(but still mild) CFL condition $\tau\leqslant c h$ is required. A discrete energy inequality under very weak regularity conditions 
	is not available for the BDF methods of orders $3$ to~$5$, in contrast to the A-stable BDF methods of orders $1$ and $2$.
	
	In Section \ref{Se:cont-perturb} we prove a perturbation result for the continuous problem by energy techniques, as a preparation for the proofs 
	of our error bounds for the discretization.
	
	In Section~\ref{section:discrete orthogonal projection}  
	we study properties of the $L^2$-orthogonal projection onto the discrete tangent space,
	which are needed to ensure consistency of the full order and  stability of the space discretization with the higher-order 
	discrete tangent space.  
	
	In Section~\ref{Se:full-discr} we study consistency properties of the methods and present the error equation.
	
	In Sections~\ref{Se:orders 1 and 2}  and~\ref{Se:orders 3 to 5}  we prove Theorems~\ref{theorem:err-bdf-full - BDF 1 and 2} 
	and~\ref{theorem:err-bdf-full - BDF 3+}, respectively.
	The higher-order convergence proofs are separated into consistency (Section~\ref{Se:full-discr}) and stability estimates.
	The stability proofs use the technique of energy estimates, in an unusual version where the error equation is tested with 
	a projection of the discrete time derivative of the error onto the discrete tangent space. These proofs are different for the 
	A-stable BDF methods of orders $1$ and $2$ and for the BDF methods of orders $3$ to $5$. For the control of nonlinearities, 
	the stability proofs also require pointwise error bounds, which are obtained with the help of finite element inverse 
	inequalities from the $H^1$ error bounds of previous time steps.
	
	In Section \ref{Se:numer-exp} we illustrate our results by numerical experiments.
	
	In an Appendix we collect basic results on energy techniques for BDF methods that are needed for our stability proofs.
	
	\pagebreak[3]
	
	\section{Discretization of the LLG equation}\label{Se:LLG-discr}
	
	We now describe the time and space discretization that is proposed and studied in this paper.
	
	\subsection{Time discretization by linearly implicit BDF methods}\label{SSe:LLG-discr-time}
	We shall discretize the LLG equation \eqref{weak} in time by the linearly implicit $k$-step BDF methods, 
	$1 \leqslant k \leqslant 5$, described by the polynomials $\delta$ and $\gamma,$
	\[\delta(\zeta)=\sum_{ \ell=1}^k\frac 1 \ell (1-\zeta)^ \ell=\sum_{j=0}^k\delta_j\zeta^j,\quad
	\gamma (\zeta)=\frac 1  \zeta\big [1-(1-\zeta)^k\big ]=\sum_{i=0}^{k-1} \gamma_i\zeta^i.\]

	We let $t_n=n\tau,\ n=0,\dotsc,N,$ be a uniform partition of the interval $[0,\bar t\,]$ with time step $\tau=\bar t/N.$
	For the $k$-step method we require $k$ starting values $\bm{m}^i$ for $i=0,\dotsc,k-1$. For $n\geqslant k$, we determine the approximation 
	$\bm{m}^n$ to $\bm{m}(t_n)$ as follows. We first extrapolate the known values $\bm{m}^{n-k},\dotsc,\bm{m}^{n-1}$ to a preliminary 
	normalized  approximation $\widehat{\bm{m}}^n$ at $t_n$,
	\begin{equation}
	\label{shn1}
	\widehat{\bm{m}}^n:=  {\displaystyle{\sum_{j=0}^{k-1}\gamma_j\bm{m}^{n-j-1}}} \Big/
	{\Big |\displaystyle{\sum_{j=0}^{k-1}\gamma_j\bm{m}^{n-j-1}}\Big |}.
	\end{equation}
	To avoid potentially undefined quantities, we define $\widehat{\bm{m}}^n$ to be an arbitrary fixed unit vector if the denominator in the above formula is zero.
	
	The derivative approximation $\dot{\bm{m}}^n$ and the solution approximation $ {\bm{m}}^n$ are related by the backward difference formula
	\begin{equation}
	\label{shn1n}
	\dot{\bm{m}}^n= 
	\frac 1\tau \sum_{j=0}^k\delta_j\bm{m}^{n-j}, \ \text{i.e., }\
	{\bm{m}}^n = \Bigl( - \sum_{j=1}^k\delta_j\bm{m}^{n-j} + \tau \dot{\bm{m}}^n \Bigr)/\delta_0.
	\end{equation}
	We  determine ${\bm{m}}^n$
	by requiring that for all $\bm{\varphi}\in \T(\widehat{\bm{m}}^n) \cap H^1(\varOmega)^3$,
	\begin{equation}
	\label{BDF1}
	\begin{aligned}
	& \alpha \bigl( \dot{\bm{m}}^n,\bm{\varphi}\bigr)+ \bigl(\widehat{\bm{m}}^n\times \dot{\bm{m}}^n,\bm{\varphi}\bigr) 
	+ \bigl(\nabla {\bm{m}}^n,\nabla \bm{\varphi}\bigr) = \bigl( \bm{H}(t_n),\bm{\varphi} \bigr)  
	\\
	& \dot{\bm{m}}^n \in \T(\widehat{\bm{m}}^n), \ \text{ i.e., }\  \widehat{\bm{m}}^n \cdot \dot{\bm{m}}^n  =0.
	\end{aligned}
	\end{equation}
	Here we note that on inserting the formula in \eqref{shn1n} for $ {\bm{m}}^n $ in the third term of \eqref{BDF1}, we obtain a linear constrained 
	elliptic equation for $\dot{\bm{m}}^n  \in \T(\widehat{\bm{m}}^n)\cap H^1(\varOmega)^3$ of the form
	\[
	\alpha \bigl( \dot{\bm{m}}^n,\bm{\varphi}\bigr)+ \bigl(\widehat{\bm{m}}^n\times \dot{\bm{m}}^n,\bm{\varphi}\bigr) 
	+ \frac\tau{\delta_0}\bigl(\nabla \dot{\bm{m}}^n,\nabla \bm{\varphi}\bigr) = \bigl( \bm{f}^n,\bm{\varphi} \bigr) \quad\ \forall \bm{\varphi}\in 
	\T(\widehat{\bm{m}}^n)\cap H^1(\varOmega)^3,
	\]
	where $ \bm{f}^n$ consists of known terms. The bilinear form on the left-hand side is $H^1(\varOmega)^3$-{coercive} 
	on $\T(\widehat{\bm{m}}^n)\cap H^1(\varOmega)^3$, and hence the above linear equation has a unique solution 
	$\dot{\bm{m}}^n \in\T(\widehat{\bm{m}}^n)\cap H^1(\varOmega)^3$ by the Lax--Milgram lemma.
	Once this elliptic equation is solved for $\dot{\bm{m}}^n$, we obtain the approximation $ {\bm{m}}^n\in H^1(\varOmega)^3$ to $ {\bm{m}}(t_n)$
	from the second formula in \eqref{shn1n}.

	%
	
	\subsection{Full discretization by BDF and higher-order finite elements}
	\label{SSe:LLG-discr-full} 
	For a family of regular and quasi-uniform finite element triangulations of $\Om$ with maximum meshwidth 
	$h>0$  we form the Lagrange finite element spaces $V_h \subset H^1(\Om)$ with piecewise 
	polynomials of degree 
	$r\geqslant 1$. 	
	We denote the $L^2$-orthogonal projections onto the finite element space by $\PPh\colon L^2(\Om)\to V_{h}$ and
	$\PPPh=\Id\otimes\PPh\colon L^2(\Om)^3 \to V_h^3$.
	With a function $\m\in H^1(\Om)^3$ that vanishes nowhere on $\varOmega$, we associate the 
	discrete tangent space
	\begin{equation}\label{discrete tangent space}
	\begin{aligned} 
	\T_h(\m) &=  \{\bm\varphi_h \in V_h^3:\, (\m\cdot\bm\varphi_h,v_h) = 0\ \ \forall\: v_h\in V_h\}  
	\\[1mm]
	&=\{\bm\varphi_h \in V_h^3\,:\,\PPh (\m\cdot\bm\varphi_h) = 0\}.
	\end{aligned}
	\end{equation}
	This space is different from the discrete tangent space used in \cite{Al08,AlJ06}, where the orthogonality constraint 
	$\m\cdot\bm\varphi_h=0$ is required to hold pointwise at the finite element nodes. Here, the constraint is enforced weakly 
	on the finite element space, as is done in various saddle point problems for partial differential equations, for example for 
	the divergence-free constraint in the Stokes problem \cite{BreF,GirR}. In contrast to that example, here the bilinear form 
	associated with the linear constraint, i.e.,
	$b(\m; \bm\varphi_h,v_h) = (\m\cdot\bm\varphi_h,v_h)$, depends on the state $\m$. This dependence substantially affects both 
	the implementation and the error analysis.

	Following  the general approach of \cite{Al08,AlJ06} with this modified discrete tangent space, we discretize \eqref{weak} in space by
	determining the time derivative $\partial_t\bm{m}_h(t) \in \T_h(\bm{m}_h(t))$ such that (omitting the argument $t$)
	\begin{equation}
	\label{weak-h}
	\alpha \bigl( \partial_t\bm{m}_h,\bm\varphi_h\bigr) + \bigl(\bm{m}_h\times \partial_t\bm{m}_h, \bm{\varphi}_h\bigr) 
	+  \bigl( \nabla \bm{m}_h, \nabla \bm{\varphi}_h \bigr) = \bigl( \bm{H},\bm{\varphi}_h \bigr) 
	\quad \forall \bm{\varphi}_h\in \T_h(\bm{m}_h),
	\end{equation}
	where the brackets $(\cdot,\cdot)$ denote again the $L^2$ inner product over the domain $\varOmega$.
	
	The full discretization with the linearly implicit BDF method is then readily obtained from \eqref{BDF1}: determine
	$\dot{\bm{m}}_h^n \in \T_h(\widehat{\bm{m}}^n_h)$ such that
	\begin{equation}
	\label{BDF1-h}
	\alpha \bigl( \dot{\bm{m}}_h^n,\bm{\varphi}_h\bigr)+ \bigl(\widehat{\bm{m}}_h^n\times \dot{\bm{m}}_h^n,\bm{\varphi}_h\bigr) 
	+ \bigl(\nabla {\bm{m}}_h^n,\nabla \bm{\varphi}_h\bigr) = \bigl( \bm{H}^n,\bm{\varphi}_h \bigr)  
	\quad \forall \bm{\varphi}_h\in \T_h(\widehat{\bm{m}}_h^n),
	\end{equation}
	where $\widehat{\bm{m}}_h^n$ and $ \dot{\bm{m}}_h^n$ are related to $\m_h^{n-j}$ for $j=0,\dotsc,k$ in the same 
	way as in \eqref{shn1} and \eqref{shn1n} above  
	with ${\bm{m}}_h^{n-j}$ in place of $\m^{n-j}$, viz.,
	\begin{equation}\label{mnh-hat-dot}
	\dot{\bm{m}}^n_h= 
	\frac 1\tau \sum_{j=0}^k\delta_j\bm{m}^{n-j}_h, 
	\qquad 
	\widehat{\bm{m}}^n_h=  {\displaystyle{\sum_{j=0}^{k-1}\gamma_j\bm{m}^{n-j-1}_h}} \Big/
	{\Big |\displaystyle{\sum_{j=0}^{k-1}\gamma_j\bm{m}^{n-j-1}_h}\Big |}.
	\end{equation}
	To avoid potentially undefined quantities, we define $\widehat{\bm{m}}^n_h$ to be an arbitrary fixed unit vector if the denominator 
	in the above formula is zero. (We will, however, show that this does not occur in the situation of sufficient regularity.)

	To implement the discrete tangent space $\T_h(\widehat{\bm{m}}^n_h)$, there are at least two options: using the constraints 
	$\PPh(\m\cdot\bm\varphi_h)=0$ or 
	constructing a local basis of $T_h(\m)$.
	
	(a) \emph{Constraints}:\/ Let $\phi_i$ for $i=1,\dotsc, N:=\text{dim} V_h$ denote the nodal basis of $V_h$ and denote the basis functions of $V_h^3$ by 
	$\bophi_{\bi}=\be_k \otimes \phi_i$ for $\bi=(i,k)$, where $\be_k$ for $k=1,2,3$ are the standard unit vectors of $\R^3$. We 
	denote by $M$ and $A$ the usual mass and stiffness matrices, respectively,
	with entries $m_{ij}=(\phi_i,\phi_j)_{L^2(\Om)}$ and $a_{ij}=(\nabla \phi_i,\nabla\phi_j)_{L^2(\Om)^3}$.
	We further introduce the sparse skew-symmetric matrix $S^n=(s_{\bi,\bj}^n) \in \R^{3N\times 3N}$ with entries
	$s_{\bi,\bj}^n= (\widehat{\bm{m}}^n_h \times \phi_{\bi},\phi_{\bj})_{L^2(\Om)^3}$ and the sparse
	constraint matrix $C^n = (c_{\bi,j}^n) \in \R^{3N\times N}$ by 
	$c_{\bi,j}^n=(\widehat{\bm{m}}^n_h\cdot \phi_{\bi},\phi_j)_{L^2(\Om)}$. Finally, we denote the matrix of the unconstrained time-discrete problem as
	\[K^n = \alpha \mathbf{I} \otimes M  +\frac\tau{\delta_0} \mathbf{I} \otimes  A + S^n.\]
	%
	Let $\dot m^n \in \R^{3N}$ denote the nodal vector of $\dot{\bm{m}}_h^n \in \T_h(\widehat{\bm{m}}_h^n)$. In this setting, 
	\eqref{BDF1-h} yields a system of linear equations of saddle point type
	\begin{equation*}
	\begin{alignedat}{2}
	&K^n \dot m^n + (C^n)^T \lambda^n &&{}= f^n,\\
	& C^n \dot m^n  &&{}= 0,
	\end{alignedat}
	\end{equation*}
	where $ \lambda^n\in \R^N$ is the unknown vector of Lagrange multipliers and $f^n\in\R^{3N}$ is a known right-hand side.
	
	(b) \emph{Local basis}:\/  It is possible to compute a local basis of $T_h(\m)$ by solving small local problems. 
	To see that, let $\omega\subset \Om$ denote a collection of elements of the mesh and let $\overline\omega\supset \omega$ 
	denote the same set plus the layer of elements touching $\omega$  (the patch of $\omega$). A sufficient (and necessary) 
	condition for $\bm\varphi_h\in V_h^3$ with $\supp(\bm\varphi_h)\subseteq \omega$ to belong to $T_h(\m)$ is 
	\begin{equation}\label{eq:locprob}
	(\m\cdot \bm\varphi_h,\psi_h) =0 \quad\text{for all } \psi_h\in V_h\text{ with } {\rm supp}(\psi_h)\subseteq \overline\omega.
	\end{equation}
	If we denote by $\#\omega$ the number of generalized hat functions of $V_h$ supported in $\omega$,
	the space of functions in $V_h^3$ with support in $\omega$ is $3\#\omega$-dimensional. On the other hand, 
	the space of test functions 
	in~\eqref{eq:locprob} is $\#\overline\omega$-dimensional. We may choose $\omega$ sufficiently large 
	(depending only on shape regularity) such that $3\#\omega>\#\overline\omega$ and hence~\eqref{eq:locprob} 
	has at least one solution which is then a local basis function of $T_h(\m)$. Choosing different $\omega$ to cover 
	$\Om$ yields a full basis of $T_h(\m)$. 
	
	Let us denote the so obtained basis of $\T_h(\widehat{\bm{m}}_h^n)$ by $(\psi_\ell^n)$, given  via
	$\psi_\ell^n = \sum_{\bi} \phi_{\bi} b_{ \bi \ell}^n $, and the sparse basis matrix by $B^n=(b_{ \bi \ell}^n)$. 
	Then, the nodal vector $\dot m^n= B^n x^n$ is obtained by solving
	the linear system 
	\[ (B^n)^T K^n B^n x^n = (B^n)^T f^n.\]
	An advantage of this approach is that the dimension is roughly halved compared to the formulation with constraints. 
	However, the efficiency of one approach versus the other depends heavily on the numerical linear algebra used. 
	Such comparisons are outside the scope of this paper.

	\begin{remark}
		The algorithm described above does not enforce the norm constraint $|\m|=1$ at the nodes.
		The user might add a normalization step in the definition of $\bm{m}^n$ in~\eqref{shn1n}. However, 
		here we do not consider this normalized variant of the method, whose convergence properties are not obvious to derive. 
	\end{remark}
	
	\begin{remark} \label{rem:implementation}
		Differently to~\cite{Al08}, we do not use the pointwise discrete tangent space 
		\begin{align*}
		T_h^{\rm pw} (\m) &{}=   \{\bm\varphi_h \in V_h^3:\, \m\cdot\bm\varphi = 0\ \text{ in every node}\}\\
		&{}=\{\bm\varphi_h \in V_h^3\,:\, I_h (\m\cdot\bm\varphi_h) = 0\} = \mathbf{I}_h \P(\m) V_h^3,
		\end{align*}
		where $I_h:C(\bar\varOmega)\to V_h$ denotes finite element interpolation and $\mathbf{I}_h= \Id \otimes I_h:C(\bar\varOmega)^3\to V_h^3$.
		It is already reported in~\cite[Section~4]{Al08} that an improvement of the order with higher-degree finite elements could not be 
		observed in numerical experiments when using the pointwise tangent spaces in the discretization \eqref{weak-h}. 
		Our analysis shows a lack of consistency of optimal order in the discretization with $T_h^{\rm pw} (\m)$, 
		which originates from the fact that $\mathbf{I}_h \P(\m)$ is not self-adjoint.
		The order reduction can, 
		however, be cured by adding a correction term: in the $n$th time step, determine 
		$\dot{\bm{m}}_h^n\in \T_h^{\rm pw}(\widehat{\bm{m}}_h^n)$
		such that for all 
		$\bm{\varphi}_h\in \T_h^{\rm pw}(\widehat{\bm{m}}_h^n)$,
		\begin{equation}
		\label{BDF1-h-pw}
		\begin{aligned}
		\alpha \bigl( \dot{\bm{m}}_h^n,\bm{\varphi}_h\bigr)&{}+ \bigl(\widehat{\bm{m}}_h^n\times \dot{\bm{m}}_h^n,\bm{\varphi}_h\bigr) 
		+ \bigl(\nabla {\bm{m}}_h^n,\nabla \bm{\varphi}_h\bigr) \\
		&{}- \bigl(\nabla \widehat{\bm{m}}_h^n,\nabla (\mathbf{I}-\mathbf{P}(\widehat{\bm{m}}_h^n))\bm{\varphi}_h\bigr)
		= \bigl( \mathbf{P}(\widehat{\bm{m}}_h^n)\bm{H}(t_n),\bm{\varphi}_h \bigr)  ,
		\end{aligned}
		\end{equation}
		with notation $\widehat{\bm{m}}_h^n$ and $ \dot{\bm{m}}_h^n$ as in \eqref{mnh-hat-dot}.
		With the techniques of the present paper, it can be shown that like \eqref{BDF1-h}, 
		also this discretization converges with optimal order in the $H^1$ norm under sufficient regularity conditions. 
		Since this paper is already rather long, we do not include 
		the proof of this result.
		In contrast to \eqref{BDF1-h} for the first- and second-order BDF methods, the method \eqref{BDF1-h-pw} does not admit an $h$- 
		and $\tau$-independent bound of the energy that is irrespective of the smoothness of the solution.  
	\end{remark}

	\section{Main results}\label{Se:main-res}
	
	\subsection{Error bound and energy inequality for BDF of orders 1 and 2}\label{SSe:main-res}

	For the full discretization with first- and second-order BDF methods and finite elements of arbitrary polynomial degree $r\geqslant 1$ 
	we will prove the following optimal-order error bound in Sections \ref{section:discrete orthogonal projection}  to \ref{Se:orders 1 and 2}. 
	
	\begin{theorem}[Error bound for orders $k=1, 2$]
		\label{theorem:err-bdf-full - BDF 1 and 2}
		Consider the full discretization \eqref{BDF1-h}
		of the LLG equation \eqref{llg-projection} by
		the  linearly implicit $k$-step BDF time discretization for $ k \leqslant 2$ 
		and finite elements of polynomial degree $r \geqslant 1$ from a family of regular and 
		quasi-uniform  triangulations of $\Om$.
		Suppose that the solution $\m$ of the LLG equation is sufficiently regular. Then, there exist $\bar \tau>0$ and $\bar h>0$ such that
		for numerical solutions obtained with step sizes $\tau\leqslant\bar\tau$ and meshwidths $h\leqslant\bar h$,
		which are restricted by the very mild CFL-type condition
		\[\tau^k \leqslant \bar c h^{1/2} \]
		%
		with a sufficiently small constant $\bar c$ $($independent of $h$ and $\tau$$)$, the errors
		are bounded by
		\begin{equation}
		\label{err-bdf12}
		\| \mh^n - \m(t_n) \|_{H^1(\Om)^3} \leqslant C (\tau^k+h^r) \quad \text{ for }\ t_n=n\tau\leqslant \bar t,
		\end{equation}
		where $C$ is independent of $h, \tau$ and $n$ $($but depends on $\alpha$ and exponentially on $\bar t$\,$)$, 
		provided that the errors of the starting values also satisfy such a bound.
	\end{theorem}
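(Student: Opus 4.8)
The plan is the classical consistency--plus--stability scheme, but carried out in the \emph{state-dependent} discrete tangent spaces $\T_h(\wmh^n)$ and with an energy argument in which the error equation is tested by a projection of the discrete time derivative of the error onto the current discrete tangent space.

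\emph{Reference solution and consistency.} I would first fix a finite element approximation $\widetilde{\bm m}_h^n\in V_h^3$ of $\m(t_n)$ (a Ritz- or $L^2$-type projection, with $H^1$ error $O(h^r)$ and $L^2$ error $O(h^{r+1})$), insert it into \eqref{BDF1-h}, and collect the residual as a defect $\bm d_h^n$ together with a \emph{constraint defect} $\bm g_h^n$ that measures the failure of $\PPh(\wmh^n\cdot\widetilde{\bm m}_h^n)$ and of $\PPh(\wmh^n\cdot\dot{\widetilde{\bm m}}{}_h^n)$ to vanish. The temporal truncation error of the BDF operator and of the extrapolation contributes $O(\tau^k)$ by Taylor expansion, and the spatial projection errors contribute $O(h^r)$ in $H^1$; the decisive point — and the reason the $L^2$-averaged tangent space \eqref{discrete tangent space} yields full order where the pointwise one of \cite{Al08} does not — is that the projection onto $\T_h(\cdot)$ enters \emph{self-adjointly}, so that no order is lost. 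Here I invoke the mapping properties of the $L^2$-orthogonal projection onto $\T_h(\cdot)$ established in Section~\ref{section:discrete orthogonal projection} ($L^2$- and $H^1$-boundedness, optimal approximation, Lipschitz dependence on the base point), as well as, as a template for the discrete estimate, the continuous perturbation result of Section~\ref{Se:cont-perturb}.

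\emph{Error equation and test function.} With $\beh^n:=\mh^n-\widetilde{\bm m}_h^n$, subtracting the reference identity from \eqref{BDF1-h} gives the error equation (Section~\ref{Se:full-discr}), valid for all $\bm\varphi_h\in\T_h(\wmh^n)$; since $\deh^n=\dmh^n-\dot{\widetilde{\bm m}}{}_h^n$ need not lie there, I test with $\bm\varphi_h=Q_h^n\deh^n$, where $Q_h^n$ is the $L^2$-orthogonal projection onto $\T_h(\wmh^n)$. The dissipative term then becomes $\alpha(\deh^n,Q_h^n\deh^n)=\alpha\|Q_h^n\deh^n\|^2=\alpha\|\deh^n\|^2-\alpha\|(\Id-Q_h^n)\deh^n\|^2$, and $(\Id-Q_h^n)\deh^n$ is of higher order because $\PPh(\wmh^n\cdot\deh^n)=-\PPh(\wmh^n\cdot\dot{\widetilde{\bm m}}{}_h^n)=-\bm g_h^n$. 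The nonlinear differences are expanded as $\wmh^n\times\dmh^n-\widehat{\widetilde{\bm m}}{}_h^n\times\dot{\widetilde{\bm m}}{}_h^n=\weh^n\times\dot{\widetilde{\bm m}}{}_h^n+\wmh^n\times\deh^n$ up to reference defects, with $\weh^n:=\wmh^n-\widehat{\widetilde{\bm m}}{}_h^n$; against $Q_h^n\deh^n=\deh^n-(\Id-Q_h^n)\deh^n$ the term $(\wmh^n\times\deh^n,\deh^n)$ vanishes identically, and $(\weh^n\times\dot{\widetilde{\bm m}}{}_h^n,\deh^n)$ is bounded by $\|\weh^n\|_{H^1}\,\|\dot{\widetilde{\bm m}}{}_h^n\|_{L^4}\,\|\deh^n\|_{L^2}$ via the embedding $H^1(\Om)\hookrightarrow L^4(\Om)$, with $\|\deh^n\|_{L^2}$ absorbed into $\alpha\|\deh^n\|^2$ by Young's inequality.

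\emph{Energy estimate, Gr\"onwall, and the $L^\infty$ bootstrap.} The elliptic term contributes $(\nb\beh^n,\nb\deh^n)$ plus remainders controlled as above; multiplying the tested equation by $\tau$ and summing over $n$, the $G$-stability of the BDF methods for $k\leqslant2$ (A-stable, hence $G$-stable with the trivial multiplier; see the Appendix) telescopes this into $\|\nb\beh^N\|_G^2$ up to the starting errors, while $\alpha\tau\sum_n\|\deh^n\|^2$ absorbs the $\deh^n$-factors. Estimating $\|\weh^n\|_{H^1}$ by $\sum_{j=1}^{k}\|\beh^{n-j}\|_{H^1}$ — which uses that the extrapolation denominators $|\sum_j\gamma_j\mh^{n-j-1}|$ stay bounded away from zero — one reaches a discrete Gr\"onwall inequality in $\|\nb\beh^n\|$ with inhomogeneity $O(\tau^k+h^r)$, from which the $L^2$ error and then the $H^1$ error follow. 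All this is legitimate only once $\wmh^n$, hence the $\mh^{n-j}$, are uniformly $L^\infty$-close to the exact solution, so that the normalizations are defined and the Lipschitz and coercivity constants are uniform; this is secured by induction on $n$: assuming the asserted $H^1$ bound at steps $<n$, the three-dimensional inverse estimate $\|\bm v_h\|_{L^\infty(\Om)}\lesssim h^{-1/2}\|\bm v_h\|_{H^1(\Om)}$ gives $\|\beh^{n-j}\|_{L^\infty}\lesssim h^{-1/2}(\tau^k+h^r)\lesssim\bar c+h^{1/2}$ under the CFL condition $\tau^k\leqslant\bar c\,h^{1/2}$, which is as small as needed. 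The three steps above then reproduce the $H^1$ bound at step $n$, closing the induction.

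\textbf{Main obstacle.} The real difficulty lies in the energy estimate in the presence of the \emph{state-dependent} tangent spaces: one must simultaneously (i) quantify the constraint defect $\bm g_h^n$ by which the projected exact solution violates the discrete orthogonality, (ii) show that $(\Id-Q_h^n)\deh^n$ is genuinely higher order, so that replacing $Q_h^n\deh^n$ by $\deh^n$ does not disturb the $G$-stability telescoping, and (iii) keep the entire bookkeeping compatible with the multiplier/energy theory of the multistep method while the base point $\wmh^n$ of $Q_h^n$ changes with $n$. None of (i)--(iii) arises in the standard linear parabolic theory, and their interplay — rather than the truncation-error or inverse-inequality estimates, which are routine — is where the work lies.
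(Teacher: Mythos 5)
Your proposal is correct and follows essentially the same route as the paper's proof: a Ritz-projected reference solution, consistency estimates for the BDF and the state-dependent tangent-space projection, an energy argument in which the error equation is tested with the $L^2$-projection $\P_h(\wmh^n)\deh^n$ of the discrete error derivative onto the current tangent space (your $Q_h^n\deh^n$), $G$-stability telescoping for $k\leqslant 2$, and an $L^\infty$ bootstrap via an inverse inequality under the CFL condition $\tau^k\leqslant \bar c h^{1/2}$.

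One small point of bookkeeping worth flagging: the paper does not carry an explicit constraint defect for the reference solution, because it \emph{defines} $\dmsh^n=\P_h(\wmsh^n)\,\frac{1}{\tau}\sum_j\delta_j\msh^{n-j}$ with the projection built in, so the reference velocity lies exactly in $T_h(\wmsh^n)$; the resulting discrepancies are then $\bs_h^n=(\Id-\P_h(\wmsh^n))\frac1\tau\sum_j\delta_j\msh^{n-j}$ (shown to be $O(\tau^k+h^r)$ in Lemma~\ref{lemma:sn}) and $\bqh^n=-(\Id-Q_h^n)\deh^n=-(\P_h(\wmh^n)-\P_h(\wmsh^n))\dmsh^n$, bounded by Lemma~\ref{lem:diff} in terms of $\|\weh^n\|$. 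Your phrase that $(\Id-Q_h^n)\deh^n$ is ``genuinely higher order'' is therefore slightly imprecise: it is controlled by a small consistency part \emph{plus} a piece proportional to $\|\weh^n\|$, i.e.\ to the errors at earlier steps, and it is only after the discrete Gr\"onwall argument that the $O(\tau^k+h^r)$ bound emerges. Your Pythagoras identity $\|Q_h^n\deh^n\|^2=\|\deh^n\|^2-\|(\Id-Q_h^n)\deh^n\|^2$ is a slightly sharper way of organizing the $\alpha$-term than the paper's Cauchy--Schwarz bound on $(\deh^n,\bqh^n)$, and the $L^4$-embedding estimate for the cross-product term is a workable alternative to the paper's $L^\infty$ estimate via $W^{1,\infty}$-stability of $\RRh$ and $\P_h$; neither difference changes the argument.
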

	
	The precise regularity requirements are as follows: 
	\begin{equation}
	\label{eq:assumptions on regularity}
	\begin{aligned}
	&\m \in  C^{k+1}([0,\bar t\,], L^\infty(\Om)^3) \cap C^1([0,\bar t\,], W^{r+1,\infty}(\Om)^3), \\
	&\varDelta \bm{m} +\bm{H} \in C([0,\bar t\,], W^{r+1,\infty}(\Om)^3) .
	\end{aligned} 
	\end{equation}
	
	%
	\begin{remark}[\emph{Discrepancy from normality}]\label{rem:normality}
		Since $\m(x,t_n)$ are unit vectors, an immediate consequence of the error estimate \eqref{err-bdf12} is that
		\begin{equation} \label{unit-discr}
		\| 1- |\m^n_h| \|_{L^2(\Om)} \leqslant C (\tau^k+h^r) \quad \text{ for }\ t_n=n\tau\leqslant \bar t,
		\end{equation}
		with a constant $C$ independent of $n, \tau$ and $h$. The proof of Theorem~\ref{theorem:err-bdf-full - BDF 1 and 2} 
		also shows that the denominator in the definition of the normalized extrapolated value ${\widehat\m}^n_h$ satisfies
		\[\Bigl\| 1- \bigl|\sum_{j=0}^{k-1}\gamma_j \m^{n-j-1}_h\bigr| \Bigr\|_{L^\infty(\Om)} \leqslant 
		C h^{-1/2} (\tau^k+h^r) \leqslant \tfrac12 \quad \text{ for }\ t_n=n\tau\leqslant \bar t,\]
		which in particular ensures that ${\widehat\m}^n_h$ is unambiguously defined.
	\end{remark}
	
	\vspace*{0.1cm}
	Testing with $\bphi=\partial_t\m\in T(\m)$ in \eqref{weak},  we obtain (only formally, if $\partial_t\m$ is not in $H^1(\varOmega)^3$)
	\[\alpha (\partial_t\m,\partial_t\m) + (\nb\m,\partial_t \nb \m) = (\bm{H},\partial_t\m),\]
	which, by integration in time and the Cauchy--Schwarz and Young inequalities, implies the energy inequality
	\[ \| \nb\m(t) \|_{L^2}^2 + \tfrac12 \alpha \int_0^t \| \partial_t\m(s) \|_{L^2}^2 \,\d s  \leqslant  \| \nb\m(0) \|_{L^2}^2 
	+ \frac1{2\alpha}\int_0^t \| \bm{H}(s) \|_{L^2}^2 \,\d s.\]

	Similarly, we test with $\bphi_h={\dot\m}^n_h\in \T_h({\widehat\m}^n_h)$ in \eqref{BDF1-h}. Then we can prove
	the following discrete energy inequality, which holds under very weak regularity assumptions on the data.
	
	\begin{proposition}[Energy inequality for orders $k=1, 2$]
		\label{prop:energy}
		Consider the full discretization \eqref{BDF1-h}
		of the LLG equation \eqref{llg-projection} by the linearly implicit $k$-step BDF time discretization for $ k \leqslant 2$ 
		and finite elements of polynomial degree $r \geqslant 1$. Then, the numerical solution satisfies the following discrete 
		energy inequality\emph{:}  for $n\geqslant k$ with $n\tau \leqslant \bar t$,
		\[\gamma_k^- \| \nabla \mh^n \|^2_{L^2} + \tfrac12 \alpha\tau \sum_{j=k}^n \| {\dot \m}_h^j \|^2_{L^2} \leqslant 
		\gamma_k^+ \sum_{i=0}^{k-1} \| \nabla \mh^i \|^2_{L^2} +
		\frac\tau{2\alpha} \sum_{j=k}^n \| \bm{H}(t_j) \|^2_{L^2},\]
		where $\gamma_1^\pm=1$ and $\gamma_2^\pm=(3\pm 2\sqrt2)/4$.
	\end{proposition}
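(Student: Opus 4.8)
The plan is to reproduce, at the fully discrete level, the formal computation preceding the statement. I would test \eqref{BDF1-h} with $\bm{\varphi}_h=\dmh^n$; this is an admissible test function because, by construction of the scheme, $\dmh^n\in\T_h(\wmh^n)$. Since $\wmh^n(x)\times\dmh^n(x)$ is pointwise orthogonal to $\dmh^n(x)$, the skew term $(\wmh^n\times\dmh^n,\dmh^n)$ vanishes, and what remains is the discrete energy balance
\[
\alpha\,\|\dmh^n\|_{L^2}^2+\bigl(\nb\mh^n,\nb\dmh^n\bigr)=\bigl(\bm{H}(t_n),\dmh^n\bigr),\qquad n\ge k .
\]
Bounding the right-hand side by the Cauchy--Schwarz and Young inequalities, $\bigl(\bm{H}(t_n),\dmh^n\bigr)\le\tfrac12\alpha\|\dmh^n\|_{L^2}^2+\tfrac1{2\alpha}\|\bm{H}(t_n)\|_{L^2}^2$, and absorbing half of the dissipation term on the left, gives the per-step inequality
\[
\tfrac12\alpha\,\|\dmh^n\|_{L^2}^2+\bigl(\nb\mh^n,\nb\dmh^n\bigr)\le\tfrac1{2\alpha}\,\|\bm{H}(t_n)\|_{L^2}^2 .
\]

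The term $\bigl(\nb\mh^n,\nb\dmh^n\bigr)$ is handled by the energy technique for BDF methods. Because $\dmh^n=\tfrac1\tau\sum_{j=0}^k\delta_j\mh^{n-j}$ is precisely the $k$-step backward difference quotient of $(\mh^j)_j$, applying the $G$-stability identity of the Appendix to the gradient sequence $(\nb\mh^j)_j$ produces a symmetric positive definite $k\times k$ matrix $G=(g_{i\ell})$ and, for $n\ge k-1$, the nonnegative quantities
\[
\Phi_n:=\sum_{i,\ell=1}^{k}g_{i\ell}\,\bigl(\nb\mh^{n+1-i},\nb\mh^{n+1-\ell}\bigr)\ge 0
\]
such that $\tau\bigl(\nb\mh^n,\nb\dmh^n\bigr)\ge\Phi_n-\Phi_{n-1}$ for $n\ge k$. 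For $k=1$ this is the elementary relation $a(a-b)\ge\tfrac12 a^2-\tfrac12 b^2$, and for $k=2$ it is the classical BDF-$2$ energy identity. With $\gamma_k^-$ and $\gamma_k^+$ as in the statement (the extreme eigenvalues of the appropriately normalised $G$), one has the spectral bounds $\gamma_k^-\,\|\nb\mh^n\|_{L^2}^2\le\Phi_n$ and $\Phi_{k-1}\le\gamma_k^+\sum_{i=0}^{k-1}\|\nb\mh^i\|_{L^2}^2$.

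Multiplying the per-step inequality by $\tau$ and inserting the telescoping lower bound, I obtain $\tfrac12\alpha\tau\,\|\dmh^n\|_{L^2}^2+\Phi_n-\Phi_{n-1}\le\tfrac\tau{2\alpha}\,\|\bm{H}(t_n)\|_{L^2}^2$. Summing over $n=k,\dots,N$ with $N\tau\le\bar t$ collapses the telescoping sum to $\Phi_N-\Phi_{k-1}$, and estimating $\Phi_N$ from below and $\Phi_{k-1}$ from above by the two spectral bounds yields exactly the claimed inequality. No regularity of the exact solution enters anywhere: the argument only uses that \eqref{BDF1-h} is well posed at each step, which was established above via the Lax--Milgram lemma, and that $\bm{H}(t_j)\in L^2(\Om)^3$.

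I do not expect a serious obstacle. The one point that needs attention is the admissibility of testing with $\dmh^n$, which rests on the constraint $\dmh^n\in\T_h(\wmh^n)$ that is built into the scheme; this, together with the pointwise identity $(\bm{a}\times\bm{b})\cdot\bm{b}=0$, makes the skew term vanish whatever $\wmh^n$ happens to be — in particular also in the degenerate case where the denominator in \eqref{mnh-hat-dot} is zero and $\wmh^n$ is set to an arbitrary unit vector, so no smoothness is implicitly required. What genuinely restricts the statement to $k\le2$ is the existence of the positive definite matrix $G$, i.e.\ Dahlquist's equivalence of $A$-stability and $G$-stability: for the BDF methods of orders $3$ to $5$ no such $G$ exists, the telescoping quantity $\Phi_n$ loses its sign, and the argument breaks down, in agreement with the discussion in Section~\ref{Se:main-res}.
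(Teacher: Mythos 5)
Your argument is correct and coincides with the paper's own proof: both test \eqref{BDF1-h} with $\dmh^n\in\T_h(\wmh^n)$ (so the skew term drops), apply Cauchy--Schwarz and Young to the source term, and handle $(\nb\mh^n,\nb\dmh^n)$ via Dahlquist's $G$-stability (Lemma~\ref{lemma:Dahlquist} with $\mu(\zeta)=1$), telescoping and bounding the endpoints with the extreme eigenvalues $\gamma_k^\pm$ of $G$. Your closing observation that the restriction to $k\le2$ is precisely the A-stability needed for a positive definite $G$ matches the paper's discussion.
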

	
	This energy inequality is an important robustness indicator of the numerical method.
	In \cite{AlJ06,BP}, such energy inequalitys are used to prove convergence without rates (for a subsequence $\tau_n\to 0$ and $h_n\to 0$) 
	to a weak solution of the LLG equation for the numerical schemes considered there (which have $\gamma^\pm=1$, but this 
	is inessential in the proofs).
	
	As the proof of Proposition~\ref{prop:energy} is short, we give it here.
	
	\begin{proof} The proof relies on the A-stability of the first- and second-order BDF methods via Dahlquist's G-stability theory 
		as expressed in Lemma~\ref{lemma:Dahlquist} of the Appendix, used with $\delta(\zeta)=\sum_{\ell=1}^k (1-\zeta)^\ell / \ell$ 
		and $\mu(\zeta)=1$. The positive definite symmetric  matrices $G=(g_{ij})_{i,j=1}^k$
		are known to be $G=1$ for $k=1$ and (see \cite[p.\,309]{HW})
		\[G=\frac14\begin{pmatrix} 1 & -2 \\
		-2 & 5
		\end{pmatrix} 
		\quad\text{ for }\ k=2,\]
		which has the eigenvalues $\gamma^\pm=(3\pm 2\sqrt2)/4$.
		
		We test with $\bm{\varphi}_h= {\dot \m}_h^n \in T_h(\widehat\m_h^n)$ in \eqref{BDF1-h} and note 
		$\bigl(\widehat{\bm{m}}_h^n\times \dot{\bm{m}}_h^n,{\dot \m}_h^n\bigr)=0$, so that
		\[\alpha \| {\dot \m}_h^n \|^2_{L^2} + ( \nabla \mh^n, \nabla {\dot \m}_h^n) = (\bm{H}^n, {\dot \m}_h^n).\]
		The right-hand side is bounded by
		\[(\bm{H}^n, {\dot \m}_h^n) \leqslant \frac\alpha 2 \| {\dot \m}_h^n \|^2_{L^2} + \frac1{2\alpha} \| \bm{H}^n \|^2_{L^2}.\] 
		Recalling the definition of ${\dot \m}_h^n$, we have by Lemma~\ref{lemma:Dahlquist}
		\[( \nabla \mh^n, \nabla {\dot \m}_h^n) \geqslant \frac1\tau \sum_{i,j=1}^k g_{ij}  ( \nabla \mh^{n-i+1}, \nabla {\m}_h^{n-j+1}) -
		\frac1\tau \sum_{i,j=1}^k g_{ij}  ( \nabla \mh^{n-i}, \nabla {\m}_h^{n-j}).\]
		We fix $\bar n$ with $k\leqslant \bar n \leqslant \bar t/\tau$ and sum from $n=k$ to $\bar n$ to obtain
		\begin{align*}
		&\sum_{i,j=1}^k g_{ij}  ( \nabla \mh^{\bar n-i+1}, \nabla {\m}_h^{\bar n-j+1}) + 
		\tfrac12 \alpha\tau \sum_{n=k}^{\bar n} \| {\dot \m}_h^n \|^2_{L^2} 
		\\
		&\qquad \leqslant
		\sum_{i,j=1}^k g_{ij}  ( \nabla \mh^{k-i}, \nabla {\m}_h^{k-j}) +
		\frac\tau{2\alpha} \sum_{n=k}^{\bar n} \| \bm{H}^n \|^2_{L^2}.
		\end{align*}
		Noting that 
		\begin{align*}
		\gamma^- \| \nabla \m_h^{\bar n} \|^2_{L^2}
		&\leqslant \sum_{i,j=1}^k g_{ij}  ( \nabla \mh^{\bar n-i+1}, \nabla {\m}_h^{\bar n-j+1}),
		\\
		%
		%
		\sum_{i,j=1}^k g_{ij}  ( \nabla \mh^{k-i}, \nabla {\m}_h^{k-j}) &\leqslant
		\gamma^+ \sum_{i=0}^{k-1} \| \nabla \mh^i \|^2_{L^2},
		\end{align*}
		we obtain the stated result.
	\end{proof}

	\subsection{Error bound for BDF of orders $3$ to $5$}
	
	For the BDF methods of orders $3$ to $5$ we prove the following result in Section~\ref{Se:orders 3 to 5}. Here we require a stronger, 
	but still moderate stepsize restriction in terms  of the meshwidth. More importantly, we must impose a positive lower bound on the 
	damping parameter $\alpha$ of \eqref{LLG1}.
	
	\begin{theorem} [Error bound for orders $k=3, 4, 5$]
		\label{theorem:err-bdf-full - BDF 3+}
		Consider the full discretization \eqref{BDF1-h}
		of the LLG equation \eqref{llg-projection} by
		the   linearly implicit $k$-step BDF time discretization for $3 \leqslant k \leqslant 5$ 
		and finite elements of polynomial degree $r \geqslant 2$ from a family of regular and 
		quasi-uniform triangulations of $\Om$.
		Suppose that the solution $\m$ of the LLG equation has the regularity \eqref{eq:assumptions on regularity}, 
		and that the damping parameter $\alpha$ satisfies
		\begin{equation}
		\label{alpha-k} 
		\begin{gathered}
		\alpha > \alpha_k \quad \text{with } \\
		\alpha_k= 0.0913, \ 0.4041, \ 4.4348 , \quad \text{for} \ k=3, 4 , 5 , \ \text{respectively} .
		\end{gathered}
		\end{equation}
		Then, for an arbitrary constant $\bar C>0$, there exist $\bar \tau>0$ and $\bar h>0$ such that
		for numerical solutions obtained with step sizes $\tau\leqslant\bar\tau$ and meshwidths $h\leqslant\bar h$ that are restricted by
		\begin{equation}\label{CFL}
		\tau \leqslant \bar C h,
		\end{equation}
		the errors are bounded by
		\[
		\| \m^n_h - \m(t_n) \|_{H^1(\Om)^3} \leqslant C (\tau^k+h^r)\quad \text{ for }\ t_n=n\tau\leqslant \bar t,
		\]
		where $C$ is independent of $h, \tau$ and $n$ $($but depends on $\alpha$ and exponentially on $\bar C\bar t$$)$, 
		provided that the errors of the starting values also satisfy such a bound.
	\end{theorem}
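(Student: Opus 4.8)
The plan is to follow the strategy of the proof of Theorem~\ref{theorem:err-bdf-full - BDF 1 and 2}, replacing Dahlquist's G-stability — which is available only for the A-stable methods $k=1,2$, where it is used in Proposition~\ref{prop:energy} — by the Nevanlinna--Odeh multiplier estimate for the $k$-step BDF method recalled in the Appendix; this substitution is exactly what produces the threshold~\eqref{alpha-k}, since the relevant multiplier is $\mu(\zeta)=1-\eta_k\zeta$ with $\eta_1=\eta_2=0$ and $\eta_3\approx0.0836$, $\eta_4\approx0.2878$, $\eta_5\approx0.8160$, and $\alpha_k=\eta_k/(1-\eta_k)$. I would split the error as $\m_h^n-\m(t_n)=\beh^n+\bigl(\m_h^*(t_n)-\m(t_n)\bigr)$, where $\m_h^*(t)\in V_h^3$ is the space approximation of $\m(t)$ constructed from the properties of the $L^2$-orthogonal projection onto the discrete tangent space established in Section~\ref{section:discrete orthogonal projection}; under the regularity~\eqref{eq:assumptions on regularity} the second term is $O(h^r)$ in $H^1(\Om)^3$. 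Inserting $\m_h^*$ into the scheme~\eqref{BDF1-h} yields an error equation for $\beh^n$ whose consistency defect, by the time-discretization consistency analysis of Section~\ref{Se:full-discr} (contributing $O(\tau^k)$ via Taylor expansion of the smooth solution under the BDF difference and extrapolation operators) together with the space-consistency properties from Section~\ref{section:discrete orthogonal projection} (contributing $O(h^r)$, crucially because the $L^2$-averaged orthogonality constraint renders the associated projection self-adjoint, unlike the nodal constraint of \cite{Al08,AlJ06}), is bounded by $C(\tau^k+h^r)$ in the norms used below.

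The error bound is then obtained by induction on the time level $n$. Assuming the asserted bound at all levels $<n$, finite element inverse inequalities together with the CFL condition~\eqref{CFL} and the hypothesis $r\geqslant2$ give the pointwise bounds $\|\beh^j\|_{L^\infty}\leqslant Ch^{-1/2}(\tau^k+h^r)$ and $\|\nabla\beh^j\|_{L^\infty}\leqslant Ch^{-3/2}(\tau^k+h^r)$, both tending to $0$; these keep $\m_h^j$ and the extrapolated values $\wmh^n$ from~\eqref{mnh-hat-dot} uniformly close to $\mathbb{S}^2$ (so $\wmh^n$ is unambiguously defined, with $\|\wmh^n\|_{L^\infty}=1$), control the quadratic term $\m\times\partial_t\m$, and bound the mismatch between $\T_h(\wmh^n)$, $\T_h(\wmh^{n-1})$ and the tangent space of $\m_h^*$, together with the non-tangential remainders $(\Id-P_h^n)$ applied to the difference quotients of $\m_h^*$, where $P_h^n$ denotes the $L^2$-orthogonal projection onto $\T_h(\wmh^n)$. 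For the stability estimate I would test the error equation with $\bphih=P_h^n\bigl(\deh^n-\eta_k\deh^{n-1}\bigr)\in\T_h(\wmh^n)$, where $\deh^n=\tau^{-1}\sum_{j=0}^k\delta_j\beh^{n-j}$. Summed over the time levels, the term $(\nabla\beh^n,\nabla(\deh^n-\eta_k\deh^{n-1}))$ telescopes into a nonnegative $G$-stability term controlling $\|\nabla\beh^{\bar n}\|_{L^2}^2$ up to starting-value terms; the damping term yields the positive quantity $\alpha(1-\eta_k)\tau\sum_n\|\deh^n\|_{L^2}^2$, again up to starting-value terms; and the critical contribution is the skew-symmetric one, which, since $(\wmh^n\times\deh^n,\deh^n)=0$ pointwise but $(\wmh^n\times\deh^n,\deh^{n-1})\neq0$, amounts essentially to $\eta_k\|\wmh^n\|_{L^\infty}\|\deh^n\|_{L^2}\|\deh^{n-1}\|_{L^2}=\eta_k\|\deh^n\|_{L^2}\|\deh^{n-1}\|_{L^2}$ and, by shift-invariance of the sum and Young's inequality, is absorbed into $\alpha(1-\eta_k)\tau\sum_n\|\deh^n\|_{L^2}^2$ precisely when $\alpha(1-\eta_k)>\eta_k$, i.e.\ $\alpha>\eta_k/(1-\eta_k)=\alpha_k$. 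The remaining terms — projection and tangent-space errors, the lower-order parts of the nonlinearity involving $\m_h^*$, and the consistency defect — are estimated with the pointwise bounds above and absorbed into small multiples of the two positive quantities plus $\tau\sum_n\|\nabla\beh^n\|_{L^2}^2$ and $\tau\sum_n C(\tau^k+h^r)^2$. A discrete Gronwall inequality then closes the induction and gives the asserted $H^1$ error bound.

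The main obstacle is this stability estimate. One has to run the Nevanlinna--Odeh multiplier argument — which couples the time levels $n$ and $n-1$ — on a constrained, state-dependent problem that is moreover tested with a projection of the discrete time derivative of the error rather than with the error itself, so the inter-level coupling collides with the step-to-step change of the discrete tangent space $\T_h(\wmh^n)$. Keeping the constants sharp enough that the skew-symmetric cross term is genuinely dominated by the damping contribution — rather than only by an unquantified ``$\alpha$ sufficiently large'' — is what forces, and pins down, the explicit thresholds $\alpha_k=\eta_k/(1-\eta_k)$; controlling the tangent-space mismatch and the projection remainders within these sharp constants, while simultaneously maintaining the pointwise bounds through the induction, is the genuinely delicate part. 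Everything else is a careful but essentially routine combination of the consistency estimates of Sections~\ref{section:discrete orthogonal projection} and~\ref{Se:full-discr}, finite element inverse inequalities, and discrete Gronwall.
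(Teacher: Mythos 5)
Your overall plan — Ritz/finite-element comparison function, consistency of order $O(\tau^k+h^r)$, an induction that carries $L^\infty$ smallness of the error through the time levels using inverse inequalities and the CFL condition, the threshold $\alpha_k=\eta_k/(1-\eta_k)$ coming from balancing the damping term against the skew-symmetric cross term — matches the structure of the paper's proof of Lemma~\ref{lemma:stability-full - BDF 3,4,5}. However, there is a concrete flaw in the central step: your test function for the energy estimate is wrong.

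You propose to test the error equation at level $n$ with $\bphih=\P_h(\wmh^n)\bigl(\deh^n-\eta_k\deh^{n-1}\bigr)$. For the stiffness term this produces (up to the projection remainder) $\bigl(\nb\beh^n,\,\nb\deh^n-\eta_k\nb\deh^{n-1}\bigr)$, which you assert telescopes via $G$-stability. It does not. In the language of Lemma~\ref{lemma:Dahlquist}, the Nevanlinna--Odeh inequality~\eqref{multiplier} applies to the pairing $\bigl(\sum_j\mu_j v_{k-j},\sum_i\delta_i v_{k-i}\bigr)$ with $\mu(\zeta)=1-\eta_k\zeta$, i.e.\ to $(\nb\beh^n-\eta_k\nb\beh^{n-1},\nb\deh^n)$, because Lemma~\ref{lemma:NO} guarantees $\Real[\delta(\zeta)/(1-\eta_k\zeta)]>0$. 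Your expression instead corresponds to $\mu\equiv 1$ paired with the modified generating polynomial $\widehat\delta(\zeta):=\delta(\zeta)(1-\eta_k\zeta)$, and there is no reason for $\Real\,\widehat\delta(\zeta)>0$ on the unit disk when $k\geqslant 3$; indeed, since $\delta(\zeta)(1-\eta_k\zeta)=\dfrac{\delta(\zeta)}{1-\eta_k\zeta}\,(1-\eta_k\zeta)^2$ and $\arg(1-\eta_k\zeta)$ reaches $\pm\arcsin\eta_k$ on the unit circle, the extra factor rotates the argument in the \emph{wrong} direction, and for $\eta_5=0.816$ the rotation alone already exceeds $\pi/2$. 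Summation in $n$ does not rescue this: reindexing shows that $\sum_n(\nb\beh^n,\nb\deh^n-\eta_k\nb\deh^{n-1})$ differs from $\sum_n(\nb\beh^n-\eta_k\nb\beh^{n-1},\nb\deh^n)$ by $\eta_k\sum_n\bigl(\nb(\beh^{n+1}-\beh^{n-1}),\nb\deh^n\bigr)$ plus boundary terms, which is of the uncontrolled size $\tau\sum_n\|\nb\deh^n\|_{L^2}^2$.

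The paper avoids this by writing the error equations at both levels $n$ and $n-1$, testing the first with $\bphih=\P_h(\wmh^n)\deh^n\in\T_h(\wmh^n)$ and the second with $\bpsih=\P_h(\wmh^{n-1})\deh^n\in\T_h(\wmh^{n-1})$ — both built from $\deh^n$, not $\deh^{n-1}$ — and then forming (eq.\ $n$)$-\eta_k$(eq.\ $n-1$). This yields the stiffness term $(\nb\beh^n-\eta_k\nb\beh^{n-1},\nb\deh^n)$ in the correct NO form, while the damping and skew-symmetric terms are the same as in your sketch. The price is an extra projection remainder $\bph^n=-(\P_h(\wmh^n)-\P_h(\wmh^{n-1}))\deh^n$, which is controlled by Lemma~\ref{lem:diff} together with the induction hypothesis $\|\beh^j\|_{L^\infty}\leqslant c_0 h$ and the CFL condition, giving $\|\bph^n\|_{L^2}\leqslant Ch\|\deh^n\|_{L^2}$; this is precisely the step where the CFL condition $\tau\leqslant\bar Ch$ and $r\geqslant 2$ enter. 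A secondary imprecision: the comparison function should be the Ritz projection $\msh^n=\RRRh\m(t_n)$ (as in Section~\ref{Se:full-discr}), so that $(\nb\msh^n,\nb\bphih)=-(\varDelta\m(t_n),\bphih)$ exactly; your ``space approximation built from the tangent-space projection'' is vaguer and would leave an extra uncontrolled stiffness defect.
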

	
	
	Theorem~\ref{theorem:err-bdf-full - BDF 3+} limits the use of the BDF methods of orders higher than $2$ (and more severely 
	for orders higher than $3$) to applications with a large damping parameter~$\alpha$, such as cases described in \cite{GM09,TH14}. 
	We remark, however, that in many situations $\alpha$ is of magnitude $10^{-2}$ or even smaller \cite{BCEU14}. A very small 
	damping parameter $\alpha$  affects not only the methods considered here. To our knowledge, the error analysis of any numerical 
	method proposed in the literature breaks down as $\alpha\to 0$, as does the energy inequality.

	It is not surprising that a positive lower bound on $\alpha$ arises for the methods of orders $k\geqslant 3$, since they are 
	not A-stable and a lower bound on $\alpha$ is required also for the simplified linear problem $(\alpha+\i)\pa_t u =  \varDelta u$, 
	which arises from \eqref{llg-projection} by freezing $\m$ in the term $\m\times\pa_t\m$ and diagonalizing this skew-symmetric linear 
	operator (with eigenvalues $\pm \i$ and $0$) and by omitting the projection $\P(\m)$ on the right-hand side of \eqref{llg-projection}.
	
	The proof of Theorem~\ref{theorem:err-bdf-full - BDF 3+} uses a  variant of the Nevanlinna--Odeh multiplier technique \cite{NO}, 
	which is described in the Appendix for the convenience of the reader. 
	While for sufficiently large $\alpha$  we have an optimal-order 
	error bound in the case of a smooth solution, there is apparently no discrete energy inequality under weak regularity 
	assumptions similar to Proposition~\ref{prop:energy} for the BDF methods of orders $3$ to~$5$.
	
	As in Remark~\ref{rem:normality}, the error bounds also allow us to bound the discrepancy from normality.

	\section{A continuous perturbation result}\label{Se:cont-perturb}

	In this section we present a perturbation result for the continuous problem, because we will later transfer the arguments 
	of its proof to the discretizations to prove stability and convergence of the numerical methods.

	Let $\m(t)$ be a solution of \eqref{llg-projection} for $0\leqslant t \leqslant \bar t$, and let $\ms(t)$, also of unit length, solve the 
	same equation up to a defect $\bd(t)$ for $0\leqslant t \leqslant \bar t$:
	\begin{equation}
	\label{eq:perturbed problem}
	\begin{aligned}
	\alpha \pa_t \ms + \ms \times \pa_t \ms = {}&  \P(\ms) (\varDelta \ms +\bm H) +\bd \\
	= {}&  \P(\m) (\varDelta \ms +\bm H) + \br ,
	\end{aligned}
	\end{equation}
	with
	\[
	\br = -  \big(\P(\m) - \P(\ms)\big) (\varDelta \ms +  \bm H) + \bd.
	\]
	Then, $\ms$ also solves the perturbed weak formulation
	\[
	\alpha (\pa_t \ms,\bphi) + (\ms \times \pa_t \ms,\bphi) +  (\nb \ms,\nb \bphi) = (\br,\bphi)   \quad\ \forall \bphi \in \T(\m)
	\cap H^1(\Om)^3,
	\]
	and the error $\be = \m - \ms$ satisfies the error equation 
	\begin{equation}
	\label{eq:error equation weak}
	\begin{aligned}
	\alpha(\pa_t \be,\bphi) + (\be \times \pa_t \ms,\bphi) + (\m \times \pa_t \be,\bphi) +{}& (\nb \be,\nb \bphi) = - (\br,\bphi) \\
	& \forall \bphi \in \T(\m)\cap H^1(\Om)^3.
	\end{aligned}
	\end{equation}

	Before we turn to the perturbation result, we need  Lipschitz-type bounds for the orthogonal projection 
	$\P(\m)=\Id-\m \m^T$ applied to sufficiently regular functions. 
	\begin{lemma}
		\label{lemma:projection errors}
		The projection $\P(\cdot)$ satisfies the following estimates, for  functions $\m,\ms,\bv : \Om \to \R^3$, 
		where $\m$ and $\ms$ take values on the unit sphere and $\ms\in W^{1,\infty}(\Om)^3$\emph{:}
		\begin{align*}
		\| (\P(\m) - \P(\ms)) \bv \|_{L^2(\Om)^3} \leqslant {}& 2\, \|\bv\|_{L^\infty(\Om)^3} \|\m-\ms\|_{L^2(\Om)^3}, \\
		\big\| \nb \big((\P(\m) - \P(\ms)) \bv\big) \big\|_{L^2(\Om)^{3\times 3}} \leqslant {}& 2\,\| \ms\|_{W^{1,\infty}(\Om)^3} \|\bv\|_{W^{1,\infty}(\Om)^3} 
		\|\m-\ms\|_{L^2(\Om)^3} \\
		&{} \qquad\qquad\!+ 6 \, \|\bv\|_{L^\infty(\Om)^3} \|\nabla(\m-\ms)\|_{L^2(\Om)^{3\times 3}}.
		\end{align*}
	\end{lemma}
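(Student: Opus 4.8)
The proof would be a direct computation based on the algebraic identity $\P(\m)-\P(\ms)=\ms\ms^T-\m\m^T$ together with the pointwise facts $|\m|=|\ms|=1$ and hence $|\m-\ms|\leqslant 2$ a.e. Writing $\be:=\m-\ms$, for the first estimate I would apply this identity to $\bv$ and telescope,
\[
(\P(\m)-\P(\ms))\bv=\ms(\ms\cdot\bv)-\m(\m\cdot\bv)
=\ms\bigl((\ms-\m)\cdot\bv\bigr)-\be\,(\m\cdot\bv),
\]
which gives the pointwise bound $|(\P(\m)-\P(\ms))\bv|\leqslant 2|\be|\,|\bv|$; taking the $L^2(\Om)$ norm and pulling out $\|\bv\|_{L^\infty(\Om)^3}$ yields the first inequality.

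For the gradient estimate I would use the product rule
\[
\nabla\bigl((\P(\m)-\P(\ms))\bv\bigr)=\bigl(\nabla(\ms\ms^T-\m\m^T)\bigr)\bv+(\P(\m)-\P(\ms))\nabla\bv
\]
and bound the two summands separately. The second one is handled exactly as before: $|(\P(\m)-\P(\ms))\partial_k\bv|\leqslant 2|\be|\,|\partial_k\bv|$ for each spatial direction $k=1,2,3$. For the first one the crucial point is to \emph{eliminate every derivative of $\m$}, since only $\ms$ (not $\m$) is assumed to be in $W^{1,\infty}$: after differentiating $\ms\ms^T-\m\m^T$ componentwise, I would substitute $\partial_k\m=\partial_k\ms+\partial_k\be$ and $\m=\ms+\be$; the pure-$\ms$ contributions cancel and what remains is, for each $k$, a sum of six terms of the types $\partial_k\ms\,(\be\cdot\bv)$, $\be\,(\partial_k\ms\cdot\bv)$, $\ms\,(\partial_k\be\cdot\bv)$, $\partial_k\be\,(\ms\cdot\bv)$, $\partial_k\be\,(\be\cdot\bv)$ and $\be\,(\partial_k\be\cdot\bv)$. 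Bounding these pointwise with $|\ms|=1$ and $|\be|\leqslant2$ gives
\[
\bigl|\partial_k\bigl((\P(\m)-\P(\ms))\bv\bigr)\bigr|\leqslant 2\,|\partial_k\ms|\,|\be|\,|\bv|+6\,|\partial_k\be|\,|\bv|+2\,|\be|\,|\partial_k\bv| .
\]

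To finish I would square, sum over $k=1,2,3$ (so that $\sum_k|\partial_k\ms|^2=|\nabla\ms|^2$, and likewise for $\be$ and $\bv$), apply Minkowski's inequality in $\ell^2(\{1,2,3\})$, and then pull the relevant $L^\infty$ factors out of the remaining $L^2$ integrals; this produces
\[
\bigl\|\nabla\bigl((\P(\m)-\P(\ms))\bv\bigr)\bigr\|_{L^2}\leqslant 2\,\|\nabla\ms\|_{L^\infty}\|\bv\|_{L^\infty}\|\be\|_{L^2}+6\,\|\bv\|_{L^\infty}\|\nabla\be\|_{L^2}+2\,\|\nabla\bv\|_{L^\infty}\|\be\|_{L^2}.
\]
The first and third terms are then absorbed into the single term $2\,\|\ms\|_{W^{1,\infty}}\|\bv\|_{W^{1,\infty}}\|\be\|_{L^2}$ via $\|\ms\|_{W^{1,\infty}}\geqslant 1$ and $\|\nabla\ms\|_{L^\infty}\|\bv\|_{L^\infty}+\|\nabla\bv\|_{L^\infty}\leqslant\|\ms\|_{W^{1,\infty}}\|\bv\|_{W^{1,\infty}}$. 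I expect no real obstacle here: the argument is elementary, and the only thing that requires a little care is the combined bookkeeping of the numerical constants together with the systematic replacement of $\nabla\m$ by $\nabla\ms+\nabla\be$, which is precisely what confines the dependence to the regularity of $\ms$.
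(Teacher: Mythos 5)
Your argument is correct and follows essentially the same route as the paper: the paper rewrites $\P(\m)-\P(\ms)=-(\m\be^T+\be\ms^T)=-(\be\be^T+\ms\be^T+\be\ms^T)$ and differentiates the latter, which yields precisely the same six terms as your substitution $\m=\ms+\be$, $\nabla\m=\nabla\ms+\nabla\be$, with the same pointwise bound $2|\nabla\ms|\,|\be|\,|\bv|+6|\nabla\be|\,|\bv|+2|\be|\,|\nabla\bv|$. Your telescoping $\ms((\ms-\m)\cdot\bv)-\be(\m\cdot\bv)$ in the first estimate is a cosmetic variant of the paper's $-\m(\be\cdot\bv)-\be(\ms\cdot\bv)$, and your final absorption into $2\|\ms\|_{W^{1,\infty}}\|\bv\|_{W^{1,\infty}}\|\be\|_{L^2}$ is exactly the step the paper leaves implicit.
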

	\begin{proof}
		Setting $\be=\m-\ms$, we start by rewriting 
		\[
		(\P(\m) - \P(\ms)) \bv 
		=  - (\m \m^T - \ms \ms^T) \bv \\
		=  - ( \m \be^T + \be \ms^T) \bv .
		\]
		
		The first inequality then follows immediately by taking the $L^2$ norm of both sides of the above equality, 
		using the fact that $\m$ and $\ms$ are of unit length.
		The second inequality is proved similarly, using the product rule
		\begin{align*}
		\partial_i(\P(\m) - \P(\ms)) \bv 
		= & - \partial_i  ( \be\be^T +\ms \be^T + \be \ms^T) \bv \\
		= & - ( \partial_i \be \be^T  + \be \partial_i \be^T + \partial_i \ms \be^T + \ms \partial_i\be^T + \partial_i\be \ms^T +\be \partial_i\ms^T ) \bv 
		\\ 
		& +  ( \m \be^T + \be \ms^T) \partial_i \bv ,
		\end{align*}
		the $L^\infty$ bound of $\partial_i \ms$, and the fact that $\| \be \|_{L^\infty} \leqslant \| \m \|_{L^\infty} + \| \ms \|_{L^\infty} \leqslant 2$.
	\end{proof}
	
	We have the following perturbation result.
	\begin{lemma}
		\label{lemma:perturbation result}
		Let $\m\t$ and $\ms\t$ be solutions of unit length of \eqref{weak} and \eqref{eq:perturbed problem}, respectively, and suppose that,      
		for $0 \leqslant t \leqslant \bar t$, we have 
		\begin{equation}
		\label{eq:assumptions on m star}
		\begin{aligned}
		\| \ms\t\|_{W^{1,\infty}(\Om)^3} + \|\pa_t  \ms\t\|_{W^{1,\infty}(\Om)^3} &\leqslant R \\
		\andquad \|\varDelta \ms\t + {\bm H}(t)\|_{L^\infty(\Om)^3} &\leqslant K.
		\end{aligned}
		\end{equation}
		Then, the error $\be(t) = \m\t - \ms\t$ satisfies, for $0 \leqslant t \leqslant \bar t$,
		\begin{equation}\label{cont-pert-est}
		\|\be(t)\|_{H^1(\Om)^3}^2
		\leqslant  C \Bigl( \| \be(0) \|_{H^1(\Om)^3}^2 + \int_0^t \|\bd(s)\|_{L^2(\Om)^3}^2 \,\d s \Bigr),
		\end{equation}
		where the constant $C$ depends only on $\alpha,R,K$, and $\bar t$.
	\end{lemma}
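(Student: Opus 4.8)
The plan is to run an energy estimate on the error equation \eqref{eq:error equation weak}, testing it with the projection of $\pa_t\be$ onto the tangent space $\T(\m)$. Since $\pa_t\be$ itself need not lie in $\T(\m)$, I would set $\bw := \P(\m)\pa_t\be \in \T(\m)\cap H^1(\Om)^3$. Using $\m\cdot\pa_t\m = 0$ and $\ms\cdot\pa_t\ms = 0$ one obtains $\m\cdot\pa_t\be = -\m\cdot\pa_t\ms = -\be\cdot\pa_t\ms$, hence the key identity
\[
\bw = \pa_t\be + (\be\cdot\pa_t\ms)\,\m,
\]
so the correction term $\bw - \pa_t\be = (\be\cdot\pa_t\ms)\m$ is pointwise parallel to $\m$ and satisfies $\|\bw-\pa_t\be\|_{L^2(\Om)^3} \leqslant R\,\|\be\|_{L^2(\Om)^3}$ by \eqref{eq:assumptions on m star}. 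I would also record that, since $\m$ and $\ms$ both have unit length, $\|\be\|_{L^\infty(\Om)^3}\leqslant 2$ and $\be\cdot\m = 1-\m\cdot\ms = \tfrac12|\be|^2$ pointwise.

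Testing \eqref{eq:error equation weak} with $\bphi = \bw$ and handling the terms one by one: the first term equals $\alpha\,(\pa_t\be,\P(\m)\pa_t\be) = \alpha\|\bw\|_{L^2}^2$ because $\P(\m)$ is a pointwise symmetric projection; the term $(\m\times\pa_t\be,\bw)$ vanishes, since $\m\times\pa_t\be = \m\times\bw$ (the correction term is parallel to $\m$) and $\m\times\bw\perp\bw$ pointwise; the term $(\be\times\pa_t\ms,\bw)$ is bounded by $R\,\|\be\|_{L^2}\|\bw\|_{L^2}$; and, writing $\nb\bw = \pa_t\nb\be + \nb\big((\be\cdot\pa_t\ms)\m\big)$, the elliptic term contributes $\tfrac12\diff\|\nb\be\|_{L^2}^2$ plus a remainder that, after expanding the product rule and using $\|\be\|_{L^\infty}\leqslant 2$ together with $\nb\m = \nb\ms + \nb\be$ (there being no a priori $L^\infty$ control of $\nb\m$), is bounded by $C(R)\,\|\be\|_{H^1}^2$. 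For the right-hand side $-(\br,\bw)$ I would invoke Lemma~\ref{lemma:projection errors} and \eqref{eq:assumptions on m star} to get $\|\br\|_{L^2} \leqslant 2K\|\be\|_{L^2} + \|\bd\|_{L^2}$, hence $-(\br,\bw) \leqslant (2K\|\be\|_{L^2}+\|\bd\|_{L^2})\|\bw\|_{L^2}$.

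It remains to control $\|\be\|_{L^2}$ itself, which the estimate above does not deliver. Differentiating and using $\pa_t\be = \bw - (\be\cdot\pa_t\ms)\m$ and $\be\cdot\m=\tfrac12|\be|^2$, one gets $\tfrac12\diff\|\be\|_{L^2}^2 = (\be,\bw) - \tfrac12\int_\Om(\be\cdot\pa_t\ms)|\be|^2 \leqslant \|\be\|_{L^2}\|\bw\|_{L^2} + R\,\|\be\|_{L^2}^2$. Adding this to the previous estimate and using Young's inequality to absorb every occurrence of $\|\bw\|_{L^2}$ into the term $\alpha\|\bw\|_{L^2}^2$ (which is thereby at least halved), I arrive at
\[
\diff \|\be(t)\|_{H^1(\Om)^3}^2 \leqslant C(\alpha,R,K)\,\|\be(t)\|_{H^1(\Om)^3}^2 + C(\alpha)\,\|\bd(t)\|_{L^2(\Om)^3}^2,
\]
and Gronwall's inequality over $[0,\bar t\,]$ yields \eqref{cont-pert-est} with $C$ depending only on $\alpha,R,K,\bar t$.

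I expect the delicate point to be the bound on the elliptic remainder $(\nb\be,\nb((\be\cdot\pa_t\ms)\m))$: because the hypotheses control derivatives of $\ms$ but not of $\m$, one must rewrite $\nb\m=\nb\ms+\nb\be$ and put $\nb\be$ into $L^2$, which is precisely what keeps the constant dependent only on $R$ rather than on $\m$. The identity $\bw=\pa_t\be+(\be\cdot\pa_t\ms)\m$ and the cancellation of the gyromagnetic term are what make the method work. Strictly speaking the computation is carried out for solutions regular enough that $\pa_t\be\in H^1(\Om)^3$, but this is exactly the setting in which the lemma will be used, and in the discrete analogues developed later all quantities live in finite-dimensional spaces.
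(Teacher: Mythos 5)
Your proof is correct and follows essentially the same strategy as the paper: test the error equation with $\P(\m)\pa_t\be\in\T(\m)$, run an energy estimate, recover the $L^2$ part of the $H^1$ norm separately, and close with Gronwall. The only departures are cosmetic improvements: you write the correction $\bw-\pa_t\be$ in the explicit pointwise form $(\be\cdot\pa_t\ms)\m$ rather than as $-(\P(\m)-\P(\ms))\pa_t\ms$, which lets you see that the \emph{entire} gyromagnetic term $(\m\times\pa_t\be,\bw)$ vanishes (the paper only uses $(\m\times\pa_t\be,\pa_t\be)=0$ and bounds $(\m\times\pa_t\be,\bq)$ separately), and you keep $\alpha\|\bw\|_{L^2}^2$ on the left instead of $\alpha\|\pa_t\be\|_{L^2}^2$, adapting the $L^2$ recovery step accordingly.
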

	
	\begin{proof} Let us first assume that $\pa_t\m(t) \in H^1(\Om)^3$ for all $t$. Following \cite{FeiT17}, we test in
		the error equation \eqref{eq:error equation weak}  with  $\bphi= \P(\m) \pa_t\be \in \T(\m)$. By the following argument, 
		this test function is then indeed in $H^1(\Om)^3$ and can be viewed as a perturbation of~$\pa_t\be$:
		\begin{align*}
		\bphi = \P(\m) \pa_t\be = {}& \P(\m) \pa_t\m - \P(\m) \pa_t \ms \\
		= {}&  \P(\m) \pa_t\m  -  \P(\ms) \pa_t \ms - (\P(\m) - \P(\ms)) \pa_t \ms \\
		= {}& \pa_t\m - \pa_t \ms - (\P(\m) - \P(\ms)) \pa_t \ms ,
		\end{align*}
		and so we have
		\begin{equation}
		\label{eq:dot e perturbed}
		\bphi = \P(\m) \pa_t\be = \pa_t\be + \bq \qquad \textnormal{ with } \quad \bq = -(\P(\m) - \P(\ms)) \pa_t \ms .
		\end{equation}
		By Lemma~\ref{lemma:projection errors} and using \eqref{eq:assumptions on m star} we have
		\begin{equation}
		\label{eq:q bound}
		\|\bq\|_{L^2} \leqslant 2 R \|\be\|_{L^2} \andquad \|\nb \bq\|_{L^2} \leqslant C R \|\be\|_{H^1} .
		\end{equation}
		
		Testing the error equation \eqref{eq:error equation weak} with $\bphi = \pa_t\be + \bq$, we obtain
		\begin{align*}
		\alpha (\pa_t \be,\pa_t\be + \bq) + (\be \times \pa_t \ms,\pa_t\be + \bq) 
		{}& + (\m \times \pa_t \be,\pa_t\be + \bq)  \\
		{}& +  (\nb \be,\nb (\pa_t\be + \bq))  = - (\br,\pa_t\be + \bq) ,
		\end{align*}
		where, by \eqref{eq:perturbed problem} and Lemma~\ref{lemma:projection errors} with \eqref{eq:assumptions on m star}, $\br$ is bounded as
		\begin{equation}\label{r-est}
		\begin{aligned}
		\|\br\|_{L^2}
		\leqslant {}&  \|\big(\P(\m) - \P(\ms)\big)( \varDelta \ms+ \bm H)\|_{L^2} + \|\bd\|_{L^2} \\
		\leqslant {}& 2K \|\be\|_{L^2} + \|\bd\|_{L^2} .
		\end{aligned}
		\end{equation}
		By collecting terms, and using the fact that $(\m \times \pa_t \be,\pa_t\be)$ vanishes, we altogether obtain
		\begin{align*}
		\alpha\|\pa_t\be\|_{L^2}^2 +  \half \diff \|\nb \be\|_{L^2}^2 
		= &{} -\alpha (\pa_t \be, \bq) - (\be \times \pa_t \ms,\pa_t\be + \bq) - (\m \times \pa_t \be,\bq) \\
		&{} - (\nb \be,\nb \bq)  - (\br,\pa_t\be + \bq) .
		\end{align*}
		For the right-hand side, the Cauchy--Schwarz inequality and $\|\m\|_{L^\infty}=1$ yield
		\begin{align*}
		& \alpha\|\pa_t\be\|_{L^2}^2 +  \half \diff \|\nb \be\|_{L^2}^2 
		\leqslant  \alpha\|\pa_t \be\|_{L^2} \|\bq\|_{L^2} + R \|\be\|_{L^2} (\|\pa_t\be\|_{L^2} + \|\bq\|_{L^2})  \\
		& \quad {}+ \|\pa_t \be\|_{L^2} \|\bq\|_{L^2} + \|\nb \be\|_{L^2} \|\nb \bq\|_{L^2}  + \|\br\|_{L^2} (\|\pa_t\be\|_{L^2} + \|\bq\|_{L^2}) .
		\end{align*}
		Young's inequality and absorptions, together with the bounds in \eqref{eq:q bound} and \eqref{r-est}, yield
		\begin{equation*}
		\alpha \half \|\pa_t\be\|_{L^2}^2 +  \half \diff \|\nb \be\|_{L^2}^2 
		\leqslant  c \|\be\|_{H^1}^2 + c\|\bd\|_{L^2}^2 .
		\end{equation*}
		Here, we note  that
		\[
		\half \diff \| \be\|_{L^2}^2 = (\pa_t\be,\be) \leqslant \tfrac 12  \|\pa_t\be\|_{L^2}^2 + \tfrac12  \|\be\|_{L^2}^2,\quad \text{so that}\quad
		\|\pa_t\be\|_{L^2}^2 \geqslant \diff \| \be\|_{L^2}^2  - \|\be\|_{L^2}^2.
		\]
		Combining these inequalities and integrating in time, we obtain
		\[
		\|\be(t) \|_{H^1}^2 \leqslant  c\|\be(0) \|_{H^1}^2 + c\int_0^t \|\be(s)\|_{H^1}^2 \d s + c \int_0^t \|\bd(s)\|_{L^2}^2 \d s .
		\]
		By Gronwall's inequality, we then obtain the stated error bound.
		
		Finally, if $\pa_t\m(t)$ is not in $H^1(\Om)^3$ for some $t$, then a regularization and density argument, which we do not 
		present here, yields the result, since the error bound does not depend on the $H^1$ norm of $\pa_t \m$.
	\end{proof}

	\section{Orthogonal projection onto the discrete tangent space}
	\label{section:discrete orthogonal projection}
	
	For  consistency and stability of the full discretization, we need to study properties of the $L^2(\Om)$-orthogonal 
	projection onto the discrete tangent space $T_h(\m)$, which we denote by 
	\[\P_h(\m)\colon V_h^3\to T_h(\m). \]
	We do not have an explicit expression for this projection, but the properties stated in Lemmas~\ref{lem:gal} to \ref{lem:stab} 
	will be used for proving consistency and stability. We recall that we consider a quasi-uniform, shape-regular 
	family $\mathcal{T}_h$ of  triangulations with Lagrange finite elements of polynomial degree~$r$.
	
	The first lemma states that the projection $\P_h(\m)$ approximates the orthogonal projection $\P(\m)=\mathbf{I} - \m\m^T$ 
	onto the tangent space $T(\m)$ with optimal order. It will be used in the consistency error analysis of Section~\ref{Se:full-discr}.
	\begin{lemma}\label{lem:gal}
		For $\m\in W^{r+1,\infty}(\Om)^3$ with $|\m|=1$ almost everywhere we have 
		\begin{align*}
		\|(\P_h(\m)-\P(\m))\bv\|_{L^2(\Om)^3} \leqslant {}& C h^{r + 1} \, \| \bv \|_{H^{r+1}(\Om)^3}, \\
		\|(\P_h(\m)-\P(\m))\bv\|_{H^1(\Om)^3} \leqslant {}&  C h^r \, \| \bv \|_{H^{r+1}(\Om)^3} ,
		\end{align*}
		for all $\bv\in H^{r+1}(\Om)^3$,
		where $C$ depends on a bound of $\|\m\|_{ W^{r+1,\infty}(\Om)^3}$.
	\end{lemma}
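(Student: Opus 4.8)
The plan is to realize $\P_h(\m)\bv$ as the solution of a discrete saddle-point problem and then to use standard saddle-point approximation theory (Babu\v{s}ka--Brezzi) together with a duality argument to obtain the two bounds. First I would observe that, by definition of the $L^2$-orthogonal projection onto $T_h(\m)$, the pair $(\bw_h,\lambda_h)\in V_h^3\times V_h$ with $\bw_h=\P_h(\m)\bv$ satisfies
\begin{equation*}
(\bw_h,\bphi_h) + (\m\cdot\bphi_h,\lambda_h) = (\bv,\bphi_h), \qquad (\m\cdot\bw_h,v_h)=0,
\end{equation*}
for all $\bphi_h\in V_h^3$ and $v_h\in V_h$. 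The continuous counterpart is $(\bw,\lambda)\in L^2(\Om)^3\times L^2(\Om)$ with $\bw=\P(\m)\bv$ and $\lambda=-\m\cdot\bv$ (so that $\bv = \bw + \m\lambda$ pointwise, which is exactly the orthogonal decomposition along $\m$). The inf--sup condition for $b(\m;\bphi_h,v_h)=(\m\cdot\bphi_h,v_h)$ on $V_h^3\times V_h$ holds uniformly in $h$ with a constant depending only on $\|\m\|_{L^\infty}=1$ and $\|1/|\m|\|_{L^\infty}$ (here $=1$): given $v_h\in V_h$, choose $\bphi_h=\mathbf P_h(\m v_h)\in V_h^3$ — or more simply note $\m v_h$ has $L^2$-norm $\|v_h\|_{L^2}$ and its $V_h^3$-component can be extracted by $\PPPh$ — so that $b(\m;\PPPh(\m v_h),v_h)=\|\m v_h\|_{L^2}^2 - (\text{orthogonal remainder})=\|v_h\|_{L^2}^2$, since $\PPPh$ is the $L^2$-projection onto $V_h^3$ and $v_h\in V_h$. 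Coercivity of $(\cdot,\cdot)$ on the kernel is trivial. Hence the abstract Brezzi estimate gives
\begin{equation*}
\|\bw-\bw_h\|_{L^2} + \|\lambda-\lambda_h\|_{L^2} \leqslant C\bigl(\inf_{\bchi_h\in V_h^3}\|\bw-\bchi_h\|_{L^2} + \inf_{q_h\in V_h}\|\lambda-q_h\|_{L^2}\bigr),
\end{equation*}
with $C$ depending only on the (uniform) inf--sup and coercivity constants, i.e. only through $\|\m\|_{L^\infty}$.

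The next step is to bound the two best-approximation errors. Since $\bv\in H^{r+1}(\Om)^3$ and $\m\in W^{r+1,\infty}(\Om)^3$, we have $\bw=\P(\m)\bv = \bv - \m(\m\cdot\bv)\in H^{r+1}(\Om)^3$ with $\|\bw\|_{H^{r+1}}\leqslant C\|\m\|_{W^{r+1,\infty}}^2\|\bv\|_{H^{r+1}}$, and likewise $\lambda = -\m\cdot\bv\in H^{r+1}(\Om)$ with the analogous bound; standard Lagrange interpolation/approximation estimates in $V_h$ then give each infimum $\leqslant C h^{r+1}\|\bv\|_{H^{r+1}}$ (with $C$ absorbing $\|\m\|_{W^{r+1,\infty}}$). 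This proves the first asserted inequality, because $(\P_h(\m)-\P(\m))\bv = \bw_h-\bw$.

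For the $H^1$ estimate I would combine a triangle inequality through a suitable interpolant with an inverse inequality, exploiting quasi-uniformity. Write
\begin{equation*}
\|\bw_h-\bw\|_{H^1} \leqslant \|\bw_h - \PPPh\bw\|_{H^1} + \|\PPPh\bw - \bw\|_{H^1},
\end{equation*}
the second term is $\leqslant Ch^r\|\bw\|_{H^{r+1}}\leqslant Ch^r\|\bv\|_{H^{r+1}}$ by stability and approximation of $\PPPh$ on quasi-uniform meshes, and for the first term the inverse inequality gives $\|\bw_h-\PPPh\bw\|_{H^1}\leqslant Ch^{-1}\|\bw_h-\PPPh\bw\|_{L^2}\leqslant Ch^{-1}(\|\bw_h-\bw\|_{L^2}+\|\bw-\PPPh\bw\|_{L^2})\leqslant Ch^{-1}\cdot Ch^{r+1}\|\bv\|_{H^{r+1}} = Ch^r\|\bv\|_{H^{r+1}}$, using the already-established $L^2$ bound. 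Adding the two contributions yields the second inequality. The main obstacle — and the only point requiring genuine care rather than bookkeeping — is the uniform-in-$h$ inf--sup condition for the $\m$-dependent bilinear form $b(\m;\cdot,\cdot)$; once that is in hand with a constant controlled solely by $\|\m\|_{L^\infty}$ (equivalently by $|\m|\geqslant 1$ a.e.), everything else is routine finite element approximation theory. I would also record that the same saddle-point structure, specialized to $\bv$ already in $V_h^3$, shows $\P_h(\m)$ is well defined and bounded on $V_h^3$, which is implicitly used in the subsequent lemmas.
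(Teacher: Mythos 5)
Your overall strategy---writing $\P_h(\m)\bv$ as a discrete saddle-point approximation of $\P(\m)\bv$, applying the Brezzi/C\'ea theory in $L^2$, and then reaching $H^1$ via a triangle inequality through $\PPPh\P(\m)\bv$ plus an inverse inequality---is sound, and your $H^1$ step is a legitimate (arguably more elementary) alternative to the paper's route, which instead applies the generalized saddle-point approximation theorem of Ciarlet, Huang and Zou in $H^1\times H^{-1}$ directly. However, there is a genuine error in your verification of the uniform inf--sup condition, which is precisely the non-routine part of the proof.

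You claim $b(\m;\PPPh(\m v_h),v_h)=\|v_h\|_{L^2}^2$, ``since $\PPPh$ is the $L^2$-projection onto $V_h^3$ and $v_h\in V_h$''. This is false: $\m v_h\notin V_h^3$ in general, because $\m$ is not piecewise polynomial, so $\PPPh(\m v_h)\neq\m v_h$. What one actually obtains is
\[
b(\m;\PPPh(\m v_h),v_h)=(\PPPh(\m v_h),\m v_h)=\|\PPPh(\m v_h)\|_{L^2}^2,
\]
and $\|\PPPh(\m v_h)\|_{L^2}\leqslant\|\m v_h\|_{L^2}=\|v_h\|_{L^2}$ with equality only if $\m v_h$ were already in $V_h^3$. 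The inf--sup bound thus reduces to the nontrivial lower estimate $\|\PPPh(\m v_h)\|_{L^2}\geqslant c\,\|v_h\|_{L^2}$, which is exactly Lemma~\ref{lemma:definite} with $s=0$, $p=2$. Its proof compares $\PPPh(\m v_h)$ with $\PPPh(\mathbf{I}_h\m\, v_h)$ using the super-approximation estimate $\|(\mathbf{I}-\PPPh)(\mathbf{I}_h\m\, v_h)\|_{L^2}\lesssim h\|\m\|_{W^{1,\infty}}\|v_h\|_{L^2}$, and the bound holds only for $h\leqslant h_R$ with a constant depending on $\|\m\|_{W^{1,\infty}}$---not, as you assert, only on $\|\m\|_{L^\infty}$. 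Once your false equality is replaced by an appeal to Lemma~\ref{lemma:definite}, the rest of your argument (the $L^2$ C\'ea estimate, the approximation of $\P(\m)\bv$ and $\m\cdot\bv$ in $V_h$, and the inverse-inequality bootstrap to $H^1$) goes through.
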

	
	The second lemma states that the projection $\P_h(\m)$ has Lipschitz bounds of the same type 
	as those of the orthogonal projection $\P(\m)$ given in Lemma~\ref{lemma:projection errors}. 
	It will be used in the stability analysis of Sections~\ref{Se:orders 1 and 2}  and~\ref{Se:orders 3 to 5}.
	\begin{lemma}\label{lem:diff}
		Let $\m \in  W^{1,\infty}(\Om)^3$ and $\widetilde\m \in H^1(\Om)^3$ with $|\m|=|\widetilde\m|=1$ 
		almost everywhere and $\| \m \|_{W^{1,\infty}} \leqslant R$. 
		There exist  $C_R>0$ and $h_R>0$  such that for $h\leqslant h_R$, for all ${\bv_h}\in V_h^3$,
		\begin{align*}
		\text{$(i)$} \qquad \|(\P_h(\m)-\P_h(\widetilde \m)){\bv_h}\|_{L^2(\Om)^3} \leqslant 
		{}& C_R \|\m-\widetilde \m\|_{L^{p}(\Om)^3}\|\bv_h\|_{L^{q}(\Om)^3}, \\
		\intertext{for $(p,q) \in \{(2,\infty),(\infty,2)\}$, and}
		\text{$(ii)$} \qquad \|(\P_h(\m)-\P_h(\widetilde \m)){\bv_h}\|_{H^1(\Om)^3} 
		\leqslant {}& C_R \|\m-\widetilde \m\|_{H^{1}(\Om)^3}\|\bv_h\|_{L^{\infty}(\Om)^3}\\
		&\ + C_R \|\m-\widetilde \m\|_{L^{2}(\Om)^3}\|\bv_h\|_{W^{1,\infty}(\Om)^3} .
		\end{align*} %
	\end{lemma}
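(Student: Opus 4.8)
The plan is to exploit the saddle-point structure behind the projection $\P_h(\m)$. Since $\T_h(\m)=\{\bphi_h\in V_h^3:\PPh(\m\cdot\bphi_h)=0\}$, its orthogonal complement in $V_h^3$ equals $\{\PPPh(v_h\m):v_h\in V_h\}$ (this is just the identity $(\ker B)^\perp=\operatorname{range}B^*$ for $B\bphi_h=\PPh(\m\cdot\bphi_h)$, whose adjoint is $B^*v_h=\PPPh(v_h\m)$). Hence
\[
\P_h(\m)\bv_h=\bv_h-\PPPh(\lambda_h\m),\qquad L_\m\lambda_h=\PPh(\m\cdot\bv_h),
\]
where $L_\m\colon V_h\to V_h$, $L_\m v_h:=\PPh\!\bigl(\m\cdot\PPPh(v_h\m)\bigr)$, is self-adjoint; once $L_\m$ is shown invertible this fixes $\lambda_h\in V_h$. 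Writing $\mu_h$ for the analogous multiplier of $\widetilde\m$, one obtains the identity
\[
(\P_h(\widetilde\m)-\P_h(\m))\bv_h=\PPPh\!\bigl((\lambda_h-\mu_h)\m\bigr)+\PPPh\!\bigl(\mu_h(\m-\widetilde\m)\bigr),
\]
so that the lemma reduces to estimating these two terms in $L^2(\Om)^3$ for $(i)$ and in $H^1(\Om)^3$ for $(ii)$, using that $\PPPh$ is stable in $L^2$, in $L^\infty$, and (by quasi-uniformity) in $H^1$.

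The crucial tool is the superapproximation estimate: for $g\in W^{1,\infty}(\Om)$ with $\|g\|_{W^{1,\infty}}\le R$ and $v_h\in V_h$ one has $\|g v_h-I_h(g v_h)\|_{L^2(\Om)}\le C h R\,\|v_h\|_{L^2(\Om)}$, proved elementwise by subtracting the value of $g$ at the element barycenter — which is reproduced by $I_h$ together with $v_h$ — and using a local inverse inequality; the same holds in $L^\infty$, and in $H^1$ with $\|v_h\|_{L^2(\Om)}$ still on the right. Because $|\m|=1$ a.e., $\PPh(v_h|\m|^2)=v_h$, whence $L_\m=\Id-E_\m$ with $E_\m v_h=\PPh\!\bigl(\m\cdot(\Id-\PPPh)(v_h\m)\bigr)$; applying superapproximation componentwise together with the stability of $\PPh$ gives $\|E_\m\|_{L^2\to L^2}+\|E_\m\|_{L^\infty\to L^\infty}\le ChR$ and the smoothing bound $\|E_\m v_h\|_{H^1}\le CR\|v_h\|_{L^2}$. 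Hence there is $h_R>0$ such that for $h\le h_R$ the operator $L_\m$ is invertible with $\|L_\m^{-1}\|$ bounded in $L^2$ and $L^\infty$; moreover $\|L_\m^{-1}\|_{H^1\to H^1}\le C_R$ follows from $L_\m^{-1}=\Id+L_\m^{-1}E_\m$, the $L^2\to H^1$ smoothing of $E_\m$, and an inverse inequality.

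Since $\widetilde\m$ carries no $W^{1,\infty}$ bound, $L_{\widetilde\m}$ is handled by a case distinction on the size of $\|\m-\widetilde\m\|_{L^\infty}\le2$. If this is below a fixed threshold, then $\|L_\m-L_{\widetilde\m}\|\le C\|\m-\widetilde\m\|_{L^\infty}$ in $L^2$, $L^\infty$ and $H^1$ (by the same stabilities and $|\m|=|\widetilde\m|=1$), so $L_{\widetilde\m}$ is a small relative perturbation of $L_\m$ and inherits the bounds on its inverse; writing $\lambda_h-\mu_h=(L_\m^{-1}-L_{\widetilde\m}^{-1})\PPh(\m\cdot\bv_h)+L_{\widetilde\m}^{-1}\PPh((\m-\widetilde\m)\cdot\bv_h)$ with $L_\m^{-1}-L_{\widetilde\m}^{-1}=L_\m^{-1}(L_{\widetilde\m}-L_\m)L_{\widetilde\m}^{-1}$, both this difference and $\mu_h=L_{\widetilde\m}^{-1}\PPh(\widetilde\m\cdot\bv_h)$ are controlled in the required norms. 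The two displayed terms are then bounded by Hölder's inequality and the stability of $\PPPh$, and in $(ii)$ by the product rule, so that a derivative falling on $\m-\widetilde\m$ gives the contribution $\|\m-\widetilde\m\|_{H^1}\|\bv_h\|_{L^\infty}$, a derivative on $\bv_h$ gives $\|\m-\widetilde\m\|_{L^2}\|\bv_h\|_{W^{1,\infty}}$, and a derivative on $\lambda_h$ or $\mu_h$ is absorbed via the $H^1$ bounds on $L_\m^{-1},L_{\widetilde\m}^{-1}$ and an inverse inequality. If $\|\m-\widetilde\m\|_{L^\infty}$ exceeds the threshold, the bounds follow from the crude estimate $\|(\P_h(\m)-\P_h(\widetilde\m))\bv_h\|_{L^2}\le2\|\bv_h\|_{L^2}$, combined with inverse inequalities whenever a weighted norm of $\bv_h$ is needed.

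The step I expect to be the main obstacle is the uniform — that is, $R$-dependent but $h$-independent — invertibility and stability of $L_\m$ and $L_{\widetilde\m}$, in particular in the $H^1$ operator norm: $L_\m$ is \emph{not} close to the identity in $\|\cdot\|_{H^1\to H^1}$ (only in $L^2$ and $L^\infty$), so the $H^1$ bound on $L_\m^{-1}$ must be extracted from the $L^2\to H^1$ smoothing of $E_\m$ through an inverse inequality, and it is here that quasi-uniformity of the mesh enters essentially (also via the $H^1$-stability of $\PPPh$). The second, related difficulty is that $\widetilde\m$ is assumed only in $H^1$, which is what forces the case distinction; the constants in the perturbative regime then have to be traced to depend only on $R$ and on the shape-regularity and quasi-uniformity constants.
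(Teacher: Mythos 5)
Your reduction to the Schur complement $L_\m v_h=\PPh\bigl(\m\cdot\PPPh(v_h\m)\bigr)$ is a valid repackaging of the paper's saddle-point argument: the uniform invertibility of $L_\m$ is precisely the inf-sup condition extracted in Lemma~\ref{lemma:definite}, the superapproximation estimate $\|E_\m\|_{L^2\to L^2}\lesssim hR$ for your error operator $E_\m$ is the same computation that underlies that lemma, and the identity $\bigl(\P_h(\widetilde\m)-\P_h(\m)\bigr)\bv_h=\PPPh\bigl((\lambda_h-\mu_h)\m\bigr)+\PPPh\bigl(\mu_h(\m-\widetilde\m)\bigr)$ is the Schur-complement view of the difference saddle-point system the paper writes down. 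Where the paper invokes the abstract stability theorems of Brezzi and of Ciarlet, Huang and Zou, you argue directly via the resolvent identity; both routes hinge on the same quantitative input, and yours is somewhat more self-contained.

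The gap is in the handling of $\widetilde\m$, and the case distinction does not close it. Concretely: for $\|\m-\widetilde\m\|_{L^\infty}\geq\varepsilon_0$, the crude bound $\|(\P_h(\m)-\P_h(\widetilde\m))\bv_h\|_{L^2}\leq 2\|\bv_h\|_{L^2}$ cannot be turned into $C_R\|\m-\widetilde\m\|_{L^2}\|\bv_h\|_{L^\infty}$, because $\|\m-\widetilde\m\|_{L^2}$ may be arbitrarily small even when $\|\m-\widetilde\m\|_{L^\infty}\geq\varepsilon_0$, and an inverse inequality on $\bv_h$ supplies the wrong factor; so the $(p,q)=(2,\infty)$ case of $(i)$, and with it $(ii)$, are not covered in that regime. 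Moreover, even in the perturbative regime the $H^1\to H^1$ bound on $L_{\widetilde\m}^{-1}$ does not follow as stated: the smoothing $\|E_{\widetilde\m}v_h\|_{H^1}\lesssim\|v_h\|_{L^2}$, like the $O(h)$ bound $\|E_{\widetilde\m}\|_{L^2\to L^2}\lesssim h$, rests on a $W^{1,\infty}$ bound for $\widetilde\m$, and perturbing off $L_\m$ does not help because $E_{\widetilde\m}-E_\m=L_\m-L_{\widetilde\m}$ supplies neither when $\widetilde\m$ is only $H^1$. Finally, the term $\|(\nabla\mu_h)(\m-\widetilde\m)\|_{L^2}$ in $(ii)$ cannot be put in either of the two allowed forms without $\|\mu_h\|_{W^{1,\infty}}\lesssim\|\bv_h\|_{W^{1,\infty}}$, which again requires $\widetilde\m\in W^{1,\infty}$. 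It is worth noting that the paper's own proof applies Lemma~\ref{lem:stab} with $\widetilde\m$ in place of $\m$, whose constant carries the factor $\|\widetilde\m\|_{W^{1,\infty}}^2$, so $C_R$ tacitly depends on $\|\widetilde\m\|_{W^{1,\infty}}$ as well; your intuition that the hypothesis $\widetilde\m\in H^1$ alone is insufficient is therefore correct, but the proposed case distinction is not the right mechanism to address it.
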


	The next lemma shows the $W^{s,p}$-stability of the projection. It is actually used for $p=2$ in the proof of Lemmas~\ref{lem:gal} 
	and~\ref{lem:diff} and will be used for $p=2$ in Section~\ref{Se:full-discr}  and for $p=\infty$ in Sections~\ref{Se:orders 1 and 2}  
	and~\ref{Se:orders 3 to 5}.
	
	\begin{lemma}\label{lem:stab} 
		There exists a constant depending only on $p\in [1,\infty]$ and the shape regularity of the mesh such that
		for all $\m\in W^{1,\infty}(\Om)^3$ with $|\m|=1$ almost everywhere,
		\begin{equation*}
		\| \P_h(\m)\bv_h\|_{W^{s,p}(\Om)^3}\leqslant C \|\m\|_{W^{1,\infty}(\Om)^3}^2\|\bv_h\|_{W^{s,p}(\Om)^3}
		\end{equation*}
		for all $\bv_h\in V_h^3$ and $s\in \{-1,0,1\}$.
	\end{lemma}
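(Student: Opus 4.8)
The plan is to exploit the saddle-point characterization of $\P_h(\m)\bv_h$. By definition, $\bw_h := \P_h(\m)\bv_h \in T_h(\m)$ is the unique element of $V_h^3$ such that $\bw_h - \bv_h \perp T_h(\m)$ in $L^2$; equivalently, there is a Lagrange multiplier $\lambda_h \in V_h$ with
\begin{equation*}
(\bw_h, \bphi_h) + (\m\,\lambda_h, \bphi_h) = (\bv_h, \bphi_h) \quad \forall \bphi_h \in V_h^3,
\qquad (\m \cdot \bw_h, v_h) = 0 \quad \forall v_h \in V_h.
\end{equation*}
The first equation says $\bw_h = \PPPh(\bv_h - \m\lambda_h) = \bv_h - \PPPh(\m\lambda_h)$, so everything reduces to estimating $\lambda_h$ and then the single term $\PPPh(\m\lambda_h)$ in $W^{s,p}$. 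Testing the constraint with $v_h = \lambda_h$ and using $\bw_h = \bv_h - \PPPh(\m\lambda_h)$ gives $(\m\cdot\bv_h,\lambda_h) = (\m\cdot\PPPh(\m\lambda_h),\lambda_h)$. The operator $v_h \mapsto \PPh(\m\cdot\PPPh(\m v_h))$ is symmetric positive definite on $V_h$ (it is $B^\ast B$ for $B v_h = \PPPh(\m v_h)$), and since $|\m|=1$ a.e.\ one has the coercivity bound $(\m\cdot\PPPh(\m v_h),v_h) = \|\PPPh(\m v_h)\|_{L^2}^2 \geq \|\m v_h\|_{L^2}^2 - \|(\Id-\PPPh)(\m v_h)\|_{L^2}^2 \geq c\|v_h\|_{L^2}^2$ for $h \leq h_R$, using a standard inverse/approximation argument together with $\|\m\|_{W^{1,\infty}} \leq R$ to control the approximation defect $\|(\Id-\PPPh)(\m v_h)\|_{L^2} \lesssim h\|\m v_h\|_{H^1} \lesssim h\,R\,\|v_h\|_{H^1} \lesssim R\,\|v_h\|_{L^2}$ by quasi-uniformity. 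Hence $\|\lambda_h\|_{L^2} \leq C_R \|\bv_h\|_{L^2}$.

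For the $p=2$ cases ($s\in\{-1,0,1\}$), the plan is then:
\begin{itemize}
\item $s=0$: directly, $\|\bw_h\|_{L^2} \leq \|\PPPh\| \,\|\bv_h - \m\lambda_h\|_{L^2} \leq \|\bv_h\|_{L^2} + \|\lambda_h\|_{L^2} \leq C_R\|\bv_h\|_{L^2}$.
\item $s=1$: use $\bw_h = \bv_h - \PPPh(\m\lambda_h)$, the $H^1$-stability of $\PPh$ on quasi-uniform meshes, and the product estimate $\|\m\lambda_h\|_{H^1} \leq \|\m\|_{W^{1,\infty}}\|\lambda_h\|_{H^1} \leq R\,C\,h^{-1}\|\lambda_h\|_{L^2}$ via the inverse inequality. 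This produces an $h^{-1}$ which must be beaten: test the constraint equation more carefully against $\lambda_h$ after applying a discrete gradient, or rather derive $\|\lambda_h\|_{H^1}\leq C_R\|\bv_h\|_{H^1}$ directly from the saddle-point system by an inf-sup / duality argument. Concretely, from $(\m\lambda_h,\bphi_h) = (\bv_h - \bw_h,\bphi_h)$ one gets $\|\lambda_h\|$ controlled in any norm in which $\m\cdot(\cdot)$ is boundedly invertible on $V_h$; combined with $\|\bw_h\|_{H^1}\leq \|\bv_h\|_{H^1} + C_R h^{-1}\|\lambda_h\|_{L^2}$ and a bootstrap one closes the $H^1$ bound.
\item $s=-1$: by duality. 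For $\bg_h \in V_h^3$, write $(\bw_h,\bg_h)$ and use the self-adjointness $(\P_h(\m)\bv_h,\bg_h) = (\bv_h,\P_h(\m)\bg_h)$ together with the $H^1$-stability just proved for $\P_h(\m)$ applied to $\bg_h$, then invoke $\|\bw_h\|_{H^{-1}} \simeq \sup_{\bg_h} (\bw_h,\bg_h)/\|\bg_h\|_{H^1}$ (valid on $V_h^3$ by quasi-uniformity).
\end{itemize}

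For general $p\in[1,\infty]$ and in particular the $p=\infty$, $s\in\{0,1\}$ cases needed later, the approach is to run the same decomposition $\bw_h = \bv_h - \PPPh(\m\lambda_h)$ but now control $\lambda_h$ in $L^\infty$ (or $W^{1,\infty}$). The $L^p$-stability of the $L^2$-projection $\PPh$ on quasi-uniform meshes (a standard result; one may cite it, as the paper does for related facts) handles the $\PPPh(\cdot)$ in front. The key is an $L^\infty$ (resp.\ $W^{1,\infty}$) bound on the Lagrange multiplier, which one obtains from a \emph{weighted} or \emph{local} energy estimate for the symmetric positive definite reduced system $B^\ast B \lambda_h = \PPh(\m\cdot\bv_h)$: because $B^\ast B$ is, up to the controlled perturbation $\Id - \PPPh$, close to the identity (again since $|\m|=1$), its inverse is $L^p$-stable uniformly in $h$ for $h\leq h_R$ — a Neumann-series / perturbation argument around the identity, where the smallness of $\|\Id - B^\ast B\|$ in the relevant operator norm follows from $h\leq h_R$ and $\|\m\|_{W^{1,\infty}}\leq R$. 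The factor $\|\m\|_{W^{1,\infty}}^2$ in the statement comes precisely from two appearances of $\m$ (once in $B$, once in $B^\ast$, i.e.\ in forming $\m\lambda_h$ and in the coefficient of the constraint).

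\textbf{Main obstacle.} The delicate point is the uniform-in-$h$ stability of the reduced (Schur-complement) operator $\lambda_h \mapsto \PPh(\m\cdot\PPPh(\m\lambda_h))$ in the \emph{non-Hilbert} norms $L^\infty$ and $W^{1,\infty}$ — equivalently, showing this operator is a small perturbation of the identity in those norms once $h\leq h_R$. The $L^2$ coercivity argument is routine, but transferring it to $L^p$ requires the $L^p$-boundedness of $\PPh$ on quasi-uniform meshes together with a careful estimate of the commutator-type defect $(\Id - \PPPh)(\m\,\cdot)$ in $L^p$, using that $\m\in W^{1,\infty}$ and a superapproximation property of $\PPh$ (i.e.\ $\PPh$ nearly commutes with multiplication by smooth functions, up to $O(h)$ in $L^p$). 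This superapproximation estimate — $\|(\Id - \PPPh)(\m v_h)\|_{L^p}\leq C h \|\m\|_{W^{1,\infty}}\|v_h\|_{L^p}$ for $v_h\in V_h^3$ — is the technical heart and is what forces the restriction $h\leq h_R$ with $h_R$ depending on $R$; everything else is bookkeeping with inverse inequalities and the stability of $\PPh$.
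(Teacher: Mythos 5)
For $s=0$ your Schur-complement / Neumann-series plan is sound and in fact roughly parallels the paper's own argument: the paper's Lemma~\ref{lem:matrixinv} is the discrete matrix form of the very operator $S\colon \lambda_h \mapsto \PPh\bigl(\m\cdot\PPPh(\m\lambda_h)\bigr)$ you consider, and its proof also expands $M^{-1}=c\sum_{k\geqslant 0}(I-cM)^k$ as a Neumann series. Your identity $(\Id-S)\lambda_h = \PPh\bigl(\m\cdot(\Id-\PPPh)(\m\lambda_h)\bigr)$, combined with the superapproximation bound $\|(\Id-\PPPh)(\m\lambda_h)\|_{L^p}\lesssim h\|\m\|_{W^{1,\infty}}\|\lambda_h\|_{L^p}$ (which the paper establishes inside the proof of Lemma~\ref{lemma:definite}), gives $\|\Id-S\|_{L^p\to L^p}<1$ for $h\leqslant h_R$, and the $L^p$ bound for $\P_h(\m)$ then follows from $\P_h(\m)\bv_h = \bv_h - \PPPh\bigl(\m\,S^{-1}\PPh(\m\cdot\bv_h)\bigr)$. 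That part is correct.

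The $s=\pm1$ cases, however, contain a genuine gap. The smallness $\|\Id-S\|\lesssim h$ is special to $L^p$: a $W^{1,p}$ analogue would require $\|(\Id-\PPPh)(\m\lambda_h)\|_{W^{1,p}}\lesssim h\|\lambda_h\|_{W^{1,p}}$, which calls for two derivatives of $\m\lambda_h$, while $\m\in W^{1,\infty}$ gives only one; the best generic bound is $\|(\Id-\PPPh)(\m\lambda_h)\|_{W^{1,p}}\lesssim\|\m\|_{W^{1,\infty}}\|\lambda_h\|_{W^{1,p}}$, with no factor of $h$, so the Neumann series has no smallness to exploit in $W^{1,p}$. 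Your proposed ``bootstrap'' for $p=2,\,s=1$ also does not close: $\|\PPPh(\m\lambda_h)\|_{H^1}\lesssim h^{-1}\|\lambda_h\|_{L^2}\lesssim h^{-1}\|\bv_h\|_{L^2}$ is not controlled by $\|\bv_h\|_{H^1}$. And an inf-sup/saddle-point argument is circular here: the inf-sup conditions invoked in Lemma~\ref{lem:diff} are themselves verified \emph{using} Lemma~\ref{lem:stab}. The paper's route is different and avoids inverting $S$ in $W^{1,p}$ altogether: it proves $s=-1$ directly and obtains $s=1$ by duality. The key device is the explicit (non-projection) operator $\widetilde\P_h^\perp(\m)\bv_h := \PPPh\bigl(\m\,\varPi_h(\m\cdot\bv_h)\bigr)$, which coincides with $\Id-\P_h(\m)$ on the discrete normal space $N_h(\m)$ up to an error that is $O(h)$ in $W^{-1,p}$ \emph{measured against $L^p$}: $\|(\Id-\widetilde\P_h^\perp(\m))\bv_h\|_{W^{-1,p}}\lesssim \|\m\|_{W^{1,\infty}}^2\, h\,\|\bv_h\|_{L^p}$ for $\bv_h\in N_h(\m)$. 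Because $\widetilde\P_h^\perp(\m)\P_h(\m)=0$, one may factor $\Id-\P_h(\m)-\widetilde\P_h^\perp(\m)=(\Id-\widetilde\P_h^\perp(\m))(\Id-\P_h(\m))$ and then use part (a) together with the inverse inequality $h\|\cdot\|_{L^p}\lesssim\|\cdot\|_{W^{-1,p}}$ to absorb the factor $h$, giving the $W^{-1,p}$ bound. This indirect mechanism --- an $O(h)$ defect in a \emph{weaker} norm, cured by an inverse inequality rather than by iteration --- is the idea missing from your plan; it is also where the $\|\m\|_{W^{1,\infty}}^2$ factor actually enters ($\m$ appears twice in $\widetilde\P_h^\perp$), rather than from the ``$B$ and $B^\ast$'' heuristic you give.
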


	These three lemmas will be proved in the course of this section, in which we formulate also three more lemmas that 
	are of independent interest but will not be used in the following sections.
	%
	%
	
	In the following, we use the dual norms 
	\[\|v\|_{W^{-1,q}}:=\sup_{w\in W^{1,p} }\frac{(v, w)}{\|w\|_{W^{1,p}}}\quad \text{for}\quad 1/p+1/q=1.\]
	The space $W^{-1,1}(\Om)$ is not the dual space of $W^{1,\infty}(\Om)$ but rather defined as the closure of 
	$L^2(\Om)$ with respect to the norm $\|\cdot\|_{W^{-1,1} }$.
	We also recall that $\varPi_h\colon W^{s,p}(\Om)$ $\to W^{s,p}(\Om)$ is uniformly bounded for $s\in\{0,1\}$ and $p\in [1,\infty]$ 
	(see, e.g.,~\cite{L2proj} for proofs in a much more general setting). By duality, we also obtain uniform boundedness
	for $s=-1$ and $p\in[1,\infty]$. A useful consequence is that for $v_h\in V_h$,
	\begin{align*}
	\|v_h\|_{W^{-1,q}} &{}=\sup_{w\in W^{1,p}}\frac{(v_h, \varPi_h w)}{\|w\|_{W^{1,p}}} 
	\\
	&{}\leqslant
	\sup_{w\in W^{1,p} }\frac{(v_h, \varPi_h w)}{\|\varPi_h w\|_{W^{1,p}}}\, 
	\sup_{w\in W^{1,p} }\frac{\|\varPi_h w\|_{W^{1,p}}}{\|w\|_{W^{1,p}}} \lesssim 
	\sup_{w_h\in V_h}\frac{(v_h, w_h)}{\|w_h\|_{W^{1,p}}}.
	\end{align*}

	\begin{lemma}
		There holds $\|v\|_{W^{s,p}(\Om)}\simeq\sup_{w\in W^{-s,q}(\Om)}\frac{(v,w)}{\|w\|_{W^{-s,q}(\Om)}}$ 
		with $1/p+1/q=1$ for $p\in[1,\infty]$ and $s\in\{-1,0,1\}$.
	\end{lemma}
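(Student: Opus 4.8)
The statement is a standard duality characterization of $W^{s,p}$ norms, which we want to establish for the three cases $s\in\{-1,0,1\}$. For $s=0$ this is just the usual statement that the $L^p$ norm is the dual of the $L^q$ norm (with $1/p+1/q=1$), valid for all $p\in[1,\infty]$; the nontrivial direction (that the supremum dominates $\|v\|_{L^p}$) is the Hahn--Banach / explicit-dual-element argument, and for $p=\infty$ one uses that $L^\infty$ is the dual of $L^1$. For $s=1$, the quantity on the right is by definition $\sup_{w\in W^{-1,q}} (v,w)/\|w\|_{W^{-1,q}}$, and we must show this is comparable to $\|v\|_{W^{1,p}}$; here the case $p=1$ requires care since $W^{-1,1}$ was defined above as a completion rather than as a dual space. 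For $s=-1$ the right-hand side is $\sup_{w\in W^{1,q}}(v,w)/\|w\|_{W^{1,q}}$, which is \emph{literally} the definition of $\|v\|_{W^{-1,p}}$ given in the text, so that case is essentially a tautology once the dual norm on $W^{-1,1}$ is reconciled with the completion definition.

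First I would treat $s=-1$: by the definition of $\|\cdot\|_{W^{-1,p}}$ recalled just before the lemma, $\|v\|_{W^{-1,p}}=\sup_{w\in W^{1,q}}(v,w)/\|w\|_{W^{1,q}}$ exactly, so there is nothing to prove for $p\in(1,\infty]$; for $p=1$ one checks that the completion-of-$L^2$ definition of $W^{-1,1}$ carries the same norm, which follows because the pairing $(v,w)$ for $v\in L^2$, $w\in W^{1,\infty}$ extends continuously and the stated supremum is exactly the completion norm. Next, $s=0$: the inequality $\sup_{w}(v,w)/\|w\|_{L^q}\le\|v\|_{L^p}$ is Hölder; the reverse is obtained by testing against $w=|v|^{p-1}\mathrm{sgn}(v)$ (suitably interpreted for $p=1$ and $p=\infty$, using in the latter case a set where $|v|$ is close to its essential supremum), a routine density/measure-theory step that I would not spell out.

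The remaining case $s=1$ is the one requiring a genuine argument. One inequality, $(v,w)\le\|v\|_{W^{1,p}}\|w\|_{W^{-1,p'}}$ wait --- more precisely $(v,w)\le\|v\|_{W^{1,p}}\|w\|_{W^{-1,q}}$, is immediate from the definition of the dual norm $\|w\|_{W^{-1,q}}$ (test that definition with the particular function $v$). For the reverse inequality one must produce, given $v\in W^{1,p}$, a functional $w$ with $\|w\|_{W^{-1,q}}\lesssim 1$ and $(v,w)\gtrsim\|v\|_{W^{1,p}}$. The natural candidate is $w=v-\diver(|\nabla v|^{p-2}\nabla v)$ type element, i.e. the element realizing the $W^{1,p}$ norm via the $(W^{1,p})^*$ duality composed with the canonical identification; concretely, identify $\|v\|_{W^{1,p}}$ with the operator norm of the functional $\phi\mapsto (v,\phi)+(\nabla v,\nabla\phi)$ on $W^{1,q}$, represent that functional (by reflexivity when $1<p<\infty$, or by the $L^\infty=(L^1)^*$ duality and a bootstrap when $p=1$ or $p=\infty$) and check the representing distribution lies in $W^{-1,q}$ with controlled norm. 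The main obstacle is the endpoint cases $p\in\{1,\infty\}$, where $W^{1,p}$ is non-reflexive and $W^{-1,1}$ is not a dual space, so one cannot simply invoke Riesz representation; there I would argue via the isometric embedding $v\mapsto(v,\nabla v)$ of $W^{1,p}$ into $L^p\times L^p(\Om)^{d}$, apply the $L^p$-case duality (case $s=0$, already done) to the product space, and then pull the representing pair $(w_0,\bm w_1)\in L^q\times L^q$ back to the distribution $w=w_0-\diver\bm w_1\in W^{-1,q}$, noting $\|w\|_{W^{-1,q}}\le\|w_0\|_{L^q}+\|\bm w_1\|_{L^q}$ and that this pairing reproduces $(v,w_0)+(\nabla v,\bm w_1)$; density of $V_h$ or of smooth functions is used to make the integration by parts rigorous, and for $W^{-1,1}$ the completion definition is precisely what makes $\|w\|_{W^{-1,1}}$ finite and the argument close.
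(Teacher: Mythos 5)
Your argument rests on the same core mechanism as the paper's proof: realize the $W^{1,p}$-norm via the $L^p$-norm of $(v,\nabla v)$, invoke $L^p$--$L^q$ duality, and transport the realizing pair $(w_0,\bm{w}_1)$ to an element $w = w_0-\diver\bm{w}_1$ of $W^{-1,q}$. The paper is considerably more economical: it notes that every case except $(s,p)=(1,\infty)$ already follows from the definitions and from the isometry of the canonical embedding of a Banach space into its bidual (which dispatches $s=-1$, $s=0$, and $s=1$ with $p<\infty$ in one stroke --- reflexivity is not needed, only Hahn--Banach), and it then treats $(1,\infty)$ directly by choosing the near-optimal $L^1$ test functions $\bq_n\in C^\infty_0(\Om)^3$ from the outset. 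That choice quietly settles the one spot where your sketch is vague: since $\bq_n\in C^\infty_0$, one has $\diver\bq_n\in L^2$, so it lies in $W^{-1,1}$ by the closure-of-$L^2$ definition, and the bound $\|\diver\bq_n\|_{W^{-1,1}}\leq\|\bq_n\|_{L^1}$ follows from a single integration by parts. In your version, the pair $(w_0,\bm{w}_1)\in L^1\times(L^1)^d$ produced by $L^\infty$--$L^1$ duality need not be smooth, so an additional mollification/density step is required before you can place $w_0-\diver\bm{w}_1$ in the completion $W^{-1,1}$. You acknowledge this at the end, but the phrase that ``the completion definition is precisely what makes $\|w\|_{W^{-1,1}}$ finite'' misidentifies the issue: finiteness of the sup-norm is automatic for such a distribution; \emph{membership in the closure of $L^2$} is the nontrivial point, and it is exactly this that the paper's choice of $C^\infty_0$ test functions makes immediate.
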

	
	\begin{proof}
		The interesting case is $(s,p)=(1,\infty)$ since all other cases follow by duality. For $v\in W^{1,\infty}(\Om)$, 
		there exists a sequence of functions $\bq_n\in C^\infty_0(\Om)^3$
		with $\|\bq_n\|_{L^1 }=1$ such that
		\begin{equation*}
		\|\nabla v\|_{L^{\infty}}=\lim_{n\to\infty} (\nabla v,\bq_n)=\lim_{n\to\infty} -(v, \diver\bq_n) 
		\leqslant \sup_{\bq\in W^{1,1}}\frac{(v,\diver\bq)}{\|\bq\|_{L^1}} .
		\end{equation*}
		Moreover, there holds
		\begin{equation*}
		\|\diver\bq\|_{W^{-1,1}}\leqslant \sup_{w\in W^{1,\infty}}\frac{(\bq,\nabla w)}{\|\nabla w\|_{L^{\infty}}}\leqslant \|\bq\|_{L^1}.
		\end{equation*}
		Combining the last two estimates shows 
		\[\|\nabla v\|_{L^{\infty}}\leqslant \sup_{w\in W^{-1,1} }\frac{(v,w)}{\|w\|_{W^{-1,1}}}.\] 
		%
		Since 
		\[\| v\|_{L^\infty}=\sup_{w\in L^1 }\frac{(v,w)}{\|w\|_{L^1}}
		\leqslant \sup_{w\in W^{-1,1} }\frac{(v,w)}{\|w\|_{W^{-1,1}}},\]
		we conclude the proof.
	\end{proof}

	Let the discrete normal space $N_h(\m):= V_h^3\ominus T_h(\m)$ be given as the $L^2$-orthogonal
	complement of $T_h(\m)$ in $V_h^3$. We note that  
	\begin{equation}\label{normal space}
	N_h(\m)=\{\PPPh(\m\psi_h)\,:\,\psi_h\in V_h\}
	\end{equation}
	by the definition of $T_h(\m)$. The functions in the discrete normal space are bounded from below as follows.
	
	\begin{lemma}\label{lemma:definite} For every $R>0$,
		there exist $h_R>0$ and $c>0$  such that for all $\m\in W^{1,\infty}(\Om)^3$ with $|\m|=1$ almost everywhere and 
		$\| \m \|_{W^{1,\infty}(\Om)} \leqslant R$ and for all $h\leqslant h_R$,
		\[ \|\PPPh(\m \psi_h)\|_{W^{s,p}(\Om)^3}\geqslant c\, \|\psi_h\|_{W^{s,p}(\Om)}\]
		for all $\psi_h\in V_h$ and $(s,p)\in \{-1,0,1\}\times [1,\infty]$. 
	\end{lemma}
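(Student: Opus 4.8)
The plan is to show that the linear map $L_h\colon V_h\to V_h^3$, $L_h\psi_h:=\PPPh(\m\psi_h)$, is bounded below in each norm $W^{s,p}$. The key identity is $\psi_h=\m\cdot(\m\psi_h)$, valid because $|\m|=1$. Applying $\PPh$ and inserting $\m\psi_h=\PPPh(\m\psi_h)+\brh$, where $\brh:=(\Id-\PPPh)(\m\psi_h)$, gives
\[
\psi_h=\PPh\bigl(\m\cdot\PPPh(\m\psi_h)\bigr)+\PPh(\m\cdot\brh).
\]
Multiplication by the vector field $\m$ is bounded on $W^{s,p}(\Om)$ for $s\in\{-1,0,1\}$ and $p\in[1,\infty]$, with operator norm $\le 1+R$ (even $\le 1$ for $s=0$, since $|\m|=1$), the case $s=-1$ following by duality from $\|\m\,w\|_{W^{1,q}}\le(1+R)\|w\|_{W^{1,q}}$. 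Together with the $W^{s,p}$-stability of $\PPh$ recalled above (uniform in $p$), the identity yields, with $C_R:=C(1+R)$,
\[
\|\psi_h\|_{W^{s,p}(\Om)}\le C_R\|L_h\psi_h\|_{W^{s,p}(\Om)^3}+C_R\|\brh\|_{W^{s,p}(\Om)^3}.
\]
Hence the whole statement reduces to three superapproximation bounds for the residual $\brh$.

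For the $L^p$ and $W^{1,p}$ bounds I would use the nodal interpolant. Since $\Id-\PPPh$ annihilates $V_h^3$, one may write $\brh=(\Id-\PPPh)\bm{z}_h$ with $\bm{z}_h:=\m\psi_h-\mathbf{I}_h(\m\psi_h)=\sum_i\psi_h(x_i)\bigl(\m-\m(x_i)\bigr)\phi_i$, the sum over the Lagrange nodes $x_i$. On an element $T$ one has $|\m-\m(x_i)|\le R\,h_T$ and $\|\nabla\phi_i\|_{L^\infty(T)}\lesssim h_T^{-1}$, so the product rule, together with a norm-equivalence estimate on polynomials of degree $r$ on $T$ (which turns $\ell^\infty$-bounds on nodal values into $\|\psi_h\|_{L^p(T)}$), gives $\|\bm{z}_h\|_{L^p(T)^3}\lesssim R\,h_T\|\psi_h\|_{L^p(T)}$ and, crucially, $\|\nabla\bm{z}_h\|_{L^p(T)^{3\times 3}}\lesssim R\,\|\psi_h\|_{L^p(T)}$ — bounded by $\|\psi_h\|_{L^p}$, \emph{not} $\|\psi_h\|_{W^{1,p}}$. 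Summing over $T$ and using boundedness of $\Id-\PPPh$ on $L^p$ and on $W^{1,p}$ yields
\[
\|\brh\|_{L^p(\Om)^3}\le C_R\,h\,\|\psi_h\|_{L^p(\Om)},\qquad \|\brh\|_{W^{1,p}(\Om)^3}\le C_R\,\|\psi_h\|_{L^p(\Om)}.
\]
For the negative norm I would instead exploit the defining orthogonality $\brh\perp V_h^3$ in $L^2$: for $\bw\in W^{1,q}(\Om)^3$ with $1/p+1/q=1$ and any $\bwh\in V_h^3$, $(\brh,\bw)=(\brh,\bw-\bwh)$, and choosing a quasi-interpolant gives $(\brh,\bw)\le\|\brh\|_{L^p}\|\bw-\bwh\|_{L^q}\lesssim h\,\|\brh\|_{L^p}\|\bw\|_{W^{1,q}}$, whence $\|\brh\|_{W^{-1,p}}\lesssim h\,\|\brh\|_{L^p}\le C_R\,h^2\|\psi_h\|_{L^p}\le C_R\,h\,\|\psi_h\|_{W^{-1,p}}$, the last step by the inverse estimate $\|\psi_h\|_{L^p}\lesssim h^{-1}\|\psi_h\|_{W^{-1,p}}$ on $V_h$.

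With these bounds the three cases follow. For $s=0$ and $s=-1$, the estimate $\|\brh\|_{W^{s,p}}\le C_R h\|\psi_h\|_{W^{s,p}}$ turns the reduction into $\|\psi_h\|_{W^{s,p}}\le C_R\|L_h\psi_h\|_{W^{s,p}}+C_R h\|\psi_h\|_{W^{s,p}}$, so for $h\le h_R$ small enough the last term is absorbed and $\|L_h\psi_h\|_{W^{s,p}}\ge c\|\psi_h\|_{W^{s,p}}$. For $s=1$ no absorption is available, but none is needed: $\|\psi_h\|_{W^{1,p}}\le C_R\|L_h\psi_h\|_{W^{1,p}}+C_R\|\psi_h\|_{L^p}$, and by the $s=0$ case $\|\psi_h\|_{L^p}\le c^{-1}\|L_h\psi_h\|_{L^p}\le c^{-1}\|L_h\psi_h\|_{W^{1,p}}$, which gives the claim. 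Taking the minimum of the finitely many resulting constants and thresholds $h_R$ (all uniform in $p$) finishes the proof. I expect the main obstacle to lie exactly in the precision of the superapproximation estimates: one must retain $\|\psi_h\|_{L^p}$, rather than $\|\psi_h\|_{W^{1,p}}$, on the right-hand side of the gradient bound for $\brh$ — this is what makes the $s=1$ case work for \emph{large} $R$ without an absorption argument — and one must exploit $\brh\perp V_h^3$ together with an inverse estimate to gain the two powers of $h$ in the $W^{-1,p}$ bound. The remaining ingredients ($W^{s,p}$-stability of $\PPh$, boundedness of multiplication by $\m$, and standard inverse and norm-equivalence estimates on the quasi-uniform mesh) are routine.
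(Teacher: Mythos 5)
Your proof is correct, and it takes a route that is genuinely different from the paper's in both the reduction step and, more significantly, the $s=1$ case. The paper decomposes $\m = \mathbf{I}_h\m + (\m - \mathbf{I}_h\m)$ and estimates $\|\PPPh(\mathbf{I}_h\m\cdot\psi_h)\|_{L^p}$ from below (via superapproximation applied to $(\Id-\PPPh)(\mathbf{I}_h\m\cdot\psi_h)$) and $\|\PPPh((\m-\mathbf{I}_h\m)\psi_h)\|_{L^p}$ from above, then handles $s=-1$ by an inverse estimate; for $s=1$ the paper switches to a duality argument on the normal space $N_h(\m)$, proving the dual estimate $\|\varPi_h(\m\cdot\bw_h)\|_{W^{-1,q}}\gtrsim\|\bw_h\|_{W^{-1,q}}$ and then invoking a dimension-count/bijectivity argument to convert this into the primal lower bound. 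Your argument instead starts from the algebraic identity $\psi_h = \PPh\bigl(\m\cdot\PPPh(\m\psi_h)\bigr) + \PPh(\m\cdot\brh)$ with $\brh=(\Id-\PPPh)(\m\psi_h)$, and reduces everything to superapproximation of the single residual $\brh$. For $s=0$ and $s=-1$ the conclusions are comparable, though you gain the factor $h$ in $W^{-1,p}$ via the $L^2$-orthogonality $\brh\perp V_h^3$ plus a quasi-interpolant, while the paper obtains the corresponding gain by composing the already-proved $s=0$ bound with an inverse inequality. The real divergence is $s=1$: the sharper gradient superapproximation $\|\brh\|_{W^{1,p}}\lesssim R\|\psi_h\|_{L^p}$ (with $\|\psi_h\|_{L^p}$ rather than $\|\psi_h\|_{W^{1,p}}$ on the right) lets you bootstrap from the already-established $s=0$ bound and avoid the duality/bijectivity machinery entirely. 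This is more elementary and arguably cleaner; what the paper's duality route buys in exchange is that it never needs to differentiate $\brh$, so it can get by with $L^p$-level superapproximation only, at the cost of the more intricate functional-analytic bookkeeping. Both proofs ultimately rest on the same ingredients ($W^{s,p}$-stability of $\PPh$, nodal-interpolant superapproximation on shape-regular meshes, inverse inequalities on the quasi-uniform family), so the prerequisites match. Two minor points worth stating explicitly in a polished write-up: first, $\mathbf{I}_h(\m\psi_h)$ is well-defined because $\m\in W^{1,\infty}\subset C^{0,1}$; second, the case $(s,p)=(1,\infty)$ relies on the pre-duality relation between $W^{-1,1}$ and $W^{1,\infty}$ that the paper records in a separate lemma, and your duality-based boundedness of multiplication by $\m$ on $W^{-1,p}$ and your characterization $\|\psi_h\|_{L^p}\lesssim h^{-1}\|\psi_h\|_{W^{-1,p}}$ both implicitly use it.
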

	\begin{proof} (a) We first prove the result for $s\in\{-1,0\}$.
		Let $\mathbf{I}_h\colon C(\Om)\to V_h^3$ denote the nodal interpolation operator and define $\mnull :=\mathbf{I}_h\m\in V_h^3$. 
		
		
		There holds
		\begin{equation*}
		\|\PPPh(\mnull  \psi_h)\|_{L^p}\geqslant \|\mnull  \psi_h\|_{L^p}-\|(\mathbf{I}-\PPPh)(\mnull  \psi_h)\|_{L^p}.
		\end{equation*}
		Moreover, stability of $\PPPh$ in $L^p(\Om)^3$, 
		for $1\leqslant p \leqslant \infty$, see \cite{L2proj}, 
		implies the estimate
		\begin{equation*}
		\|(\mathbf{I}-\PPPh)(\mnull  \psi_h)\|_{L^p} \leqslant (1+C) \inf_{\bv_h\in V_h^3} \|\mnull  \psi_h - \bv_h\|_{L^p} .
		\end{equation*}
		In turn, this implies 
		\begin{align*}
		\|(\mathbf{I}-\PPPh)(\mnull  \psi_h)\|_{L^p}&{}\lesssim \|(\mathbf{I}-\mathbf{I}_h)(\mnull  \psi_h)\|_{L^p}\\
		&{}=\Big(\sum_{T\in\TT_h}\|(\mathbf{I}-\mathbf{I}_h)(\mnull  \psi_h)\|_{L^p(T)^3}^p\Big)^{1/p}.
		\end{align*}
		For each element, the approximation properties of $\mathbf{I}_h$ show 
		\begin{align*}
		\|(\mathbf{I}-\mathbf{I}_h)(\mnull  \psi_h)\|_{L^p(T)^3}&{}\lesssim h^{r+1}\|\nabla^{r+1}(\mnull \psi_h)\|_{L^p(T)^3}\\
		&{}\leqslant h^{r+1}\sum_{i+j=r+1}\|\nabla^{\min\{i,r\}}\mnull \|_{L^\infty(T)^3}\|\nabla^{\min\{j,r\}}\psi_h\|_{L^p(T)^3}.
		\end{align*}
		Thus, multiple inverse estimates yield
		\begin{equation*}
		\|(\mathbf{I}-\mathbf{I}_h)(\mnull  \psi_h)\|_{L^p(T)^3}\lesssim 
		h\|\mnull \|_{W^{1,\infty} }\|\psi_h\|_{L^p(T)^3}.
		\end{equation*}
		Moreover, we  have
		\[\|\mnull  \psi_h\|_{L^p}\geqslant \|\m\psi_h\|_{L^p} - \|(\m-\mnull )\psi_h\|_{L^p} \geqslant \tfrac12 \|\psi_h\|_{L^p}\]
		provided that $\| \m-\mnull \|_{L^\infty}\leqslant \tfrac12$, which in view of
		\[\| \m-\mnull \|_{L^\infty}= \| (\mathbf{I}-\mathbf{I}_h)\m\|_{L^\infty} \lesssim h\|\nabla \m\|_{L^\infty}\]
		is satisfied for $h\leqslant h_R$ with a sufficiently small $h_R>0$ that depends only on $R$.
		Altogether, this shows
		\begin{equation*}
		\|\PPPh(\mnull  \psi_h)\|_{L^p}\gtrsim\| \psi_h\|_{L^p}
		\end{equation*}
		for $h\leqslant h_R$. Similarly we estimate
		\begin{equation*}
		\|\PPPh((\m-\mnull ) \psi_h)\|_{L^p}\lesssim \|\m-\mnull \|_{L^\infty}\| \psi_h\|_{L^p}\lesssim
		h\|\nabla\m\|_{L^\infty}\| \psi_h\|_{L^p}.
		\end{equation*}
		%
		Altogether, we obtain
		\begin{equation*}
		\|\PPPh(\m \psi_h)\|_{L^p}\gtrsim \|\PPPh(\mnull  \psi_h)\|_{L^p}-\|\PPPh((\mnull -\m) \psi_h)\|_{L^p}\gtrsim \| \psi_h\|_{L^p}
		\end{equation*}
		for $h\leqslant h_R$.
		This concludes the proof for $s= 0$.
		Finally, for $s=-1$ we note that by using the result for $s=0$ and an inverse inequality,
		\begin{align*}
		\|(\mathbf{I}-\PPPh)(\m \psi_h)\|_{W^{-1,p}}&{}\lesssim h \|\psi_h\|_{L^p}\\
		&{}\lesssim h\|\PPPh(\m \psi_h)\|_{L^p}\lesssim \|\PPPh(\m \psi_h)\|_{W^{-1,p}}.
		\end{align*}
		Since $ \|\m \psi_h\|_{W^{-1,p}} \gtrsim \|\m\|_{W^{1,\infty}}^{-1}\| \psi_h\|_{W^{-1,p}}$, this concludes the proof for $s\in\{-1,0\}$. 
		
		(b) It remains to prove the result for $s=1$.
		Note that the result follows from duality if we show
		\begin{equation}\label{eq:dualgoal}
		\|\varPi_h(\m\cdot\bw_h)\|_{W^{-1,q}} \gtrsim \|\bw_h\|_{W^{-1,q}}
		\end{equation}
		for all $\bw_h\in N_h(\m)$.  To see this, note that~\eqref{eq:dualgoal} implies
		\begin{align*}
		\|\PPPh&(\m \psi_h)\|_{W^{1,p}}\geqslant \sup_{\bw_h\in N_h(\m)}\frac{ (\psi_h, \varPi_h(\m\cdot \bw_h))}{\|\bw_h\|_{W^{-1,q}}}\\
		&\gtrsim
		\sup_{\bw_h\in N_h(\m)}\frac{ (\psi_h, \varPi_h(\m\cdot \bw_h))}{\|\varPi_h(\m\cdot \bw_h)\|_{W^{-1,q}}}
		=\sup_{\omega_h\in V_h}\frac{ (\psi_h, \omega_h)}{\|\omega_h\|_{W^{-1,q}}} \simeq \|\psi_h\|_{W^{1,p}},
		\end{align*}
		where we used in the second to last equality that part (a) 
		for $s=0$ already shows that $\dim (N_h(\m)) = \dim (V_h)$ 
		and since~\eqref{eq:dualgoal} implies that the map $N_h(\m)\to V_h,\,\bw_h\mapsto \varPi_h(\m\cdot \bw_h)$
		is injective, it is already bijective.
		It remains to prove~\eqref{eq:dualgoal}.
		To that end, we first show for $\bw_h=\PPPh(\m \omega_h)\in N_h(\m)$ for some $\omega_h\in V_h$, 
		using the reverse triangle inequality, that
		\begin{align*}
		\|\m\cdot \bw_h\|_{W^{-1,q}}&{} \geq \|\omega_h\|_{W^{-1,q}}-\|\m\cdot(\mathbf{I} -\PPPh)(\m \omega_h)\|_{W^{-1,q}}\\
		&{}\gtrsim  \|\m\|_{W^{1,\infty}}^{-1}\|\bw_h\|_{W^{-1,q}}-\|\m\cdot(\mathbf{I} -\PPPh)(\m \omega_h)\|_{W^{-1,q}}.
		\end{align*}
		With $\m_h:= \mathbf{I}_h(\m)\in V_h^3$, the last term satisfies
		\begin{align*}
		\|\m\cdot(\mathbf{I}& -\PPPh)(\m \omega_h)\|_{W^{-1,q}}\lesssim h \|\m\|_{W^{1,\infty}}\|(\mathbf{I} -\PPPh)(\m \omega_h)\|_{L^q}\\
		&\lesssim h \|\m\|_{W^{1,\infty}}(\|\m-\m_h\|_{L^\infty}\|\omega_h\|_{L^q} + h\|\m_h\|_{W^{1,\infty}}\|\omega_h\|_{L^q}),
		\end{align*}
		where we used the same arguments as in the proof of part (a) 
		to get the estimate $\|(\mathbf{I}-\PPPh)(\m_h \omega_h)\|_{L^q}
		\lesssim h\|\m_h\|_{W^{1,\infty}}\|\omega_h\|_{L^q}$. The fact $\|\m_h\|_{W^{1,\infty}} \lesssim \|\m\|_{W^{1,\infty}}$, 
		the approximation property 
		$\|\m-\m_h\|_{L^\infty}\lesssim h\|\m\|_{W^{1,\infty}}$, and an inverse inequality conclude
		\begin{equation}\label{eq:def1}
		\|\m\cdot \bw_h\|_{W^{-1,q}}\gtrsim \|\bw_h\|_{W^{-1,q}}
		\end{equation}
		with (hidden) constants depending only on $\|\m\|_{W^{1,\infty}}$ and shape regularity of the mesh.
		
		To prove~\eqref{eq:dualgoal}, it remains to bound the left-hand side above by $\|\varPi_h(\m\cdot \bw_h)\|_{W^{-1,q}}$. To that end, we note
		\begin{align*}
		\|(I&-\varPi_h)(\m\cdot \bw_h)\|_{W^{-1,q}}\lesssim h\|\bw_h\|_{L^q}=h\sup_{\bv\in L^p}\frac{(\bw_h,\bv)}{\|\bv\|_{L^p}}\\
		&\lesssim
		h\sup_{\bv\in N_h(\m)}\frac{(\bw_h,\bv)}{\|\bv\|_{L^p}}= h\sup_{v\in V_h}\frac{(\varPi_h(\m\cdot \bw_h),v)}{\|\PPPh(\m v)\|_{L^p}}
		\lesssim 
		h \|\varPi_h(\m\cdot \bw_h)\|_{L^q},
		\end{align*}
		where we used part (a) 
		for $s=0$ for the last inequality. An inverse inequality and the combination with~\eqref{eq:def1} imply~\eqref{eq:dualgoal} for $h>0$ sufficiently small
		in terms of $\|\m\|_{W^{1,\infty} }^{-1}$.
		This concludes the proof.
	\end{proof}

	\begin{lemma}\label{lem:matrixinv}
		Define the matrix $M\in\R^{N\times N}$, where $N$ denotes the dimension of~$V_h$, by $M_{ij}:=h^{-3}(\PPPh(\m \phi_j),\PPPh(\m \phi_i))$. 
		Under the assumptions of Lemma~\ref{lemma:definite},
		there exists $C>0$ such that for $h\leqslant h_R$,
		\[ \|M\|_p + \|M^{-1}\|_p \leqslant C \quad\text{ for } 1 \leqslant p \leqslant \infty,\]
		where $C$ depends only on the shape regularity. 
	\end{lemma}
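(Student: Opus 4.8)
The plan is to bound $M$ and $M^{-1}$ uniformly in the $\ell^1$ and $\ell^\infty$ operator norms and then interpolate. Throughout I abbreviate $g_i:=\PPPh(\m\phi_i)\in V_h^3$, so that $M_{ij}=h^{-3}(g_j,g_i)$ and $M=M^T$, and I write $\omega_i=\supp\phi_i$ for the patch of the node $x_i$ and $d(i,j)$ for the Euclidean distance of $x_i$ and $x_j$. Note that $M$ is in general \emph{dense} (the $L^2$-projection $\PPPh$ is nonlocal), so sparsity is not available.

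\emph{Spectral bounds.} First I would record that $M$ is symmetric positive definite with spectrum in a fixed interval $[a,b]$, $0<a\le b$. For $x=(x_i)\in\R^N$ and $\psi_h=\sum_i x_i\phi_i$ one has $x^TMx=h^{-3}\|\PPPh(\m\psi_h)\|_{L^2(\Om)^3}^2$; since $|\m|=1$ gives $\|\m\psi_h\|_{L^2}=\|\psi_h\|_{L^2}$ and $\PPPh$ is an $L^2$-contraction, this is bounded from above by $h^{-3}\|\psi_h\|_{L^2}^2$, while Lemma~\ref{lemma:definite} (with $s=0$, $p=2$) bounds it from below by $c\,h^{-3}\|\psi_h\|_{L^2}^2$. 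Combined with the standard spectral equivalence $h^3\|x\|_2^2\simeq\|\psi_h\|_{L^2}^2$ of the mass matrix, this yields $a\|x\|_2^2\le x^TMx\le b\|x\|_2^2$, whence $\|M\|_2\le b$ and $\|M^{-1}\|_2\le a^{-1}$; the constants depend only on the shape regularity, and the admissible meshwidth $h\le h_R$ is the one from Lemma~\ref{lemma:definite}.

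\emph{Bounds for $M$.} Since $M=M^T$ it suffices to estimate $\|M\|_\infty=\max_i\sum_j|M_{ij}|$. Using $(g_j,g_i)=(\m\phi_j,g_i)$ and $\supp\phi_j=\omega_j$, I would bound $|M_{ij}|\le h^{-3}\|\m\phi_j\|_{L^1}\|g_i\|_{L^\infty(\omega_j)}\le C\|g_i\|_{L^\infty(\omega_j)}$ via $\|\m\phi_j\|_{L^1}\le\|\phi_j\|_{L^1}\lesssim h^3$. Each element belongs to boundedly many patches $\omega_j$, and the elementwise inverse estimate gives $\|g_i\|_{L^\infty(T)}\le Ch^{-3}\|g_i\|_{L^1(T)}$, so summing over $j$ yields $\sum_j|M_{ij}|\le Ch^{-3}\sum_{T\in\TT_h}\|g_i\|_{L^1(T)}=Ch^{-3}\|g_i\|_{L^1(\Om)}$. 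Finally $L^1$-stability of $\PPPh$ (the case $s=0$, $p=1$ of the uniform boundedness recalled earlier in this section) gives $\|g_i\|_{L^1}=\|\PPPh(\m\phi_i)\|_{L^1}\le C\|\phi_i\|_{L^1}\lesssim h^3$, hence $\|M\|_1=\|M\|_\infty\le C$, and Riesz--Thorin interpolation between $\ell^1\to\ell^1$ and $\ell^\infty\to\ell^\infty$ gives $\|M\|_p\le C$ for all $p\in[1,\infty]$.

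\emph{Bounds for $M^{-1}$, and the main obstacle.} The spectral bound controls only $\|M^{-1}\|_2$, and there is no elementary passage from that to $\ell^1$/$\ell^\infty$ — this is the crux of the proof. The plan is to exploit that $M$ is not merely $\ell^1$-bounded but exponentially localized at scale $h$: from the classical pointwise decay of the $L^2$-projection away from the support of its argument, $\|g_i\|_{L^\infty(T)}=\|\PPPh(\m\phi_i)\|_{L^\infty(T)}\le C e^{-\gamma\,\mathrm{dist}(T,\omega_i)/h}$ with $\gamma>0$ depending only on the shape regularity, so $|M_{ij}|\le C e^{-\gamma' d(i,j)/h}$, and in three dimensions the number of nodes with $\ell h\le d(i,j)<(\ell+1)h$ is $\lesssim(\ell+1)^2$. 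By the Demko--Moss--Smith estimate for inverses of band matrices, in the form extended by Jaffard to exponentially localized matrices, applied to the symmetric positive definite $M$ with spectrum in $[a,b]$ and this decay profile, the inverse inherits $|(M^{-1})_{ij}|\le\widetilde C e^{-\widetilde\gamma d(i,j)/h}$ with $\widetilde C,\widetilde\gamma$ depending only on $a,b,C,\gamma'$. Summing over the distance shells gives $\|M^{-1}\|_1=\|M^{-1}\|_\infty\le C$, and a final Riesz--Thorin interpolation yields $\|M^{-1}\|_p\le C$ for all $p\in[1,\infty]$, with all constants depending only on the shape regularity. The hard part is precisely this decay-of-the-inverse step: one has to check that the locality of $M$ really is at scale $h$ uniformly in $h$ (which rests on the $h$-uniform exponential decay of the $L^2$-projection), and that the Demko--Moss--Smith/Jaffard decay constants depend only on the ($h$-independent) condition number $b/a$ and decay rate $\gamma'$.
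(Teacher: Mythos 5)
Your proposal is correct and takes essentially the same route as the paper: both rest on the $h$-uniform spectral bounds from Lemma~\ref{lemma:definite}, the exponential localization of $\PPPh$ (cf.\ \cite{BAYS14}) to get off-diagonal decay of $M$, and a Jaffard-type argument that a symmetric positive definite matrix with uniformly bounded condition number and exponential off-diagonal decay has an inverse with the same kind of decay, whence the $\ell^1/\ell^\infty$ bounds and interpolation. The only cosmetic difference is that you invoke Demko--Moss--Smith/Jaffard as a black box, whereas the paper reproduces a short Neumann-series version of Jaffard's argument.
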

	\begin{proof}
		Lemma~\ref{lemma:definite} shows for $x\in\R^N$
		\begin{equation}\label{eq:elliptic}
		Mx\cdot x = h^{-3}\|\PPPh(\m \sum_{i=1}^N x_i\phi_i)\|_{L^2}^2\gtrsim h^{-3}\|\sum_{i=1}^N x_i\phi_i\|_{L^2}\simeq |x|^2,
		\end{equation}
		where $|\cdot|$ denotes the Euclidean norm on $\R^N$.
		Let $d(i,j):= {\rm dist}(z_i,z_j)h^{-3}$ denote the metric which (approximately) measures the number of elements 
		between the supports of $\phi_i$ and $\phi_j$, 
		corresponding to the nodes $z_i$ and $z_j$, and let $B_d(z)$ denote the corresponding ball. 
		In the following, we use a localization property of the $L^2$-projection, i.e.,
		there exist $a,b>0$ such that for all $\ell\in\N$,
		\begin{equation}\label{eq:expdecay}
		\|\PPPh(\m\phi_i)\|_{L^2(\varOmega\setminus B_\ell(z_i))^3}\leqslant a \e^{-b\ell}\|\m\phi_i\|_{L^2}.
		\end{equation}
		The proof of this bound is essentially contained in the proof of~\cite[Lemma~3.1]{BAYS14}. 
		Since we use the very same arguments below, we briefly recall the strategy: 
		First, one observes that the mass matrix $\widetilde M\in \R^{N\times N}$ with entries $\widetilde M_{ij}:=h^{-3}(\phi_j,\phi_i)$ 
		is banded in the sense that $d(i,j)\gtrsim 1$ implies $\widetilde M_{ij}=0$,
		and it satisfies $\widetilde M x\cdot x\gtrsim |x|^2$.
		As shown below, this implies that the inverse matrix $\widetilde M^{-1}$ satisfies $|(\widetilde M^{-1})_{ij}|\lesssim \e^{-bd(i,j)}$ 
		for some $b>0$ independent of $h>0$.
		Note that each entry of the vector field $\PPPh(\m\phi_i)\in V_h^3$ can be represented by $\sum_{j=1}^N x_{k,j}\phi_j, k=1,2,3,$ and
		is computed by solving $\widetilde Mx_k = g_k\in\R^N$ with $\m=(m_1,m_2,m_3)^T$ and $g_{k,j}:= (m_k\phi_i,\phi_j)$. 
		Hence, the exponential decay of $\widetilde M^{-1}$ directly
		implies~\eqref{eq:expdecay}.
		
		
		From the decay property~\eqref{eq:expdecay}, we immediately obtain
		\begin{equation*}
		|M_{ij}|\leqslant \widetilde a \e^{-\widetilde b d(i,j)}
		\end{equation*}
		for all $1\leqslant i,j\leqslant N$ and some $\widetilde a,\widetilde b>0$. This already proves $\|M\|_p  \leqslant C$. 
		We follow the arguments from~\cite{Jaf90} to show that also $M^{-1}$
		decays exponentially. To that end, note that~\eqref{eq:elliptic} implies the existence of $c>0$ such that $\|I-cM\|_2=:q<1$ and hence
		\begin{equation}\label{eq:neumann}
		M^{-1} = c(I-(I-cM))^{-1} = c\sum_{k=0}^\infty (I-cM)^k.
		\end{equation}
		Clearly, $I-cM$ inherits the decay properties from $M$ and therefore
		\begin{align*}
		|((I-cM)^{k+1})_{ij}|&{}\leqslant \widetilde a^{k+1}\sum_{r_1,\dotsc,r_k=1}^N \e^{-\widetilde b (d(i,r_1)+\dotsb + d(r_k,j))}\\
		&{}\leqslant \widetilde a^{k+1}\Big(\max_{s=1,\dotsc,N}\sum_{r=1}^N \e^{-\widetilde b d(s,r)/2}\Big)^k \e^{-\widetilde b d(i,j)/2}.
		\end{align*}
		The value of $\max_{s=1,\dotsc,N}\sum_{r=1}^N \e^{-\widetilde b d(s,r)/2}$ depends only on the shape regularity 
		of the triangulation and on $\widetilde b$, but is independent of $h$ (it just
		depends on the number of elements contained in an annulus of thickness $\approx h$).
		This implies the existence of $\widetilde c\geqslant 1$ such that
		\begin{equation*}
		|((I-cM)^{k+1})_{ij}|\leqslant \min\{q^{k+1},\widetilde c^{k+1}\e^{-\widetilde b d(i,j)/2}\}.
		\end{equation*}
		Thus, for $\widetilde c^{k+1}\leqslant \e^{\widetilde b d(i,j)/4}$, we have $|((I-cM)^{k+1})_{ij}|\leqslant  \e^{-\widetilde b d(i,j)/4}$, 
		whereas for 
		$\widetilde c^{k+1}> \e^{\widetilde b d(i,j)/4}$, we have $|((I-cM)^{k+1})_{ij}|\leqslant q^{k+1}<q^{\widetilde b d(i,j)/(4\log(\widetilde c))}$. 
		Altogether, we find some $\widetilde b>0$
		(we reuse the symbol), independent of $h$ such that
		\begin{equation*}
		|((I-cM)^{k+1})_{ij}|\leqslant q^{(k+1)/2} |((I-cM)^{k+1})_{ij}|^{1/2}\lesssim q^{(k+1)/2}\e^{-\widetilde b d(i,j)}.
		\end{equation*}
		Plugging this into~\eqref{eq:neumann}, we obtain
		\begin{equation*}
		|(M^{-1})_{ij}|\lesssim \sum_{k=0}^\infty q^{(k+1)/2}\e^{-\widetilde b d(i,j)}\lesssim \e^{-\widetilde b d(i,j)}.
		\end{equation*}
		This yields the stated result. 
	\end{proof}
	
	We are now in a position to prove Lemma~\ref{lem:stab}.
	
	\begin{proof}[Proof of Lemma~\ref{lem:stab}]
		(a) We first consider the case $s=0$.
		In view of \eqref{normal space}, we write $(\mathbf{I}-\P_h(\m))\bv_h \in N_h(\m)$ as
		\[(\mathbf{I}-\P_h(\m))\bv_h= h^{-3/2}\sum_{i=1}^N x_i\PPPh(\m \phi_i)\]
		for some coefficient vector $x\in \R^N$ and let $b_i:= h^{-3/2}(\bv_h,\m \phi_i)$ for $i=1,\dotsc,N$. Then, there holds $Mx=b$ with the matrix $M$
		from Lemma~\ref{lem:matrixinv}. This lemma and the $L^p$-stability of the $L^2$-orthogonal projection $\Pi_h$ \cite{L2proj} imply that for $p\in[1,\infty]$,
		\begin{align*}
		&\| (\mathbf{I}-\P_h(\m))\bv_h\|_{L^p}= \| \PPPh h^{-3/2}\sum_{i=1}^N x_i\m \phi_i\|_{L^p}\lesssim 
		\| h^{-3/2}\sum_{i=1}^N x_i\m \phi_i\|_{L^p}\\
		&{}\lesssim
		h^{-3/2}\Big(\sum_{i=1}^N h^3|x_i|^p\Big)^{1/p}
		=  h^{3/p-3/2}|x|_{p} =  h^{3/p-3/2}|M^{-1} b|_{p}\lesssim   h^{3/p-3/2}| b|_{p}.
		\end{align*}
		With $|b_i|\leqslant h^{-3/2}\|\bv_h\|_{L^p(\supp(\phi_i))^3}h^{3(1-1/p)}=\|\bv_h\|_{L^p(\supp(\phi_i))^3}h^{3/2-3/p}$, this shows
		\[\| \P_h(\m)\bv_h\|_{L^p} \lesssim  \|\bv_h\|_{L^p}.\]

		(b) We now turn to the cases $s=\pm1$. Define the operator 
		\[\widetilde \P_h^\perp(\m)\bv_h:=\PPPh(\m\varPi_h(\m\cdot\bv_h))\]
		and note that $\widetilde \P_h^\perp(\m)\bv_h\in N_h(\m)$ as well as
		$\kernel \widetilde \P_h^\perp(\m) = T_h(\m)$ (due to Lemma~\ref{lemma:definite}). However, $\widetilde \P_h^\perp(\m)$ is no projection. 
		We observe for $\bv_h=\PPPh(\m\psi_h) \in N_h(\m)$ that
		\begin{align*}
		\|(\mathbf{I}-\widetilde \P_h^\perp(\m))\bv_h\|_{W^{-1,p}} &{}= \|\PPPh\m\psi_h -\PPPh(\m\varPi_h(\m\cdot\PPPh(\m\psi_h))) \|_{W^{-1,p}} \\
		&{}\lesssim 
		\|\m\|_{W^{1,\infty}}\|\psi_h -\m\cdot\PPPh(\m\psi_h) \|_{W^{-1,p}}\\
		&{}=
		\|\m\|_{W^{1,\infty}}^2\|(\mathbf{I}-\PPPh)(\m\psi_h) \|_{W^{-1,p}}\\
		&{}\lesssim
		\|\m\|_{W^{1,\infty}}^2 \,h\|\psi_h \|_{L^p}.
		\end{align*}
		With Lemma~\ref{lemma:definite} we conclude 
		\[ \|(\mathbf{I}-\widetilde \P_h^\perp(\m))\bv_h\|_{W^{-1,p}}\lesssim  \|\m\|_{W^{1,\infty}}^2h\|\bv_h\|_{L^p}.\]
		Since $\widetilde \P_h^\perp(\m) \P_h(\m) =0$ by definition of $T_h(\m)$, we obtain  with part (a) and an inverse inequality that for all $\bv_h\in V_h^3$,
		\begin{align*}
		\|(\mathbf{I}-\P_h(\m)-\widetilde \P_h^\perp(\m))\bv_h\|_{W^{-1,p}} &{}= \|(\mathbf{I}-\widetilde \P_h^\perp(\m))(\mathbf{I}-\P_h(\m))\bv_h\|_{W^{-1,p}}\\
		&{}\lesssim  \|\m\|_{W^{1,\infty}}^2h\|(\mathbf{I}-\P_h(\m))\bv_h\|_{L^p}\\
		&{}\lesssim  \|\m\|_{W^{1,\infty}}^2h\|\bv_h\|_{L^p}\\
		&{}\lesssim \|\m\|_{W^{1,\infty}}^2\|\bv_h\|_{W^{-1,p}}.
		\end{align*}
		The $W^{-1,p}(\Om)$-stability of $\varPi_h$ implies
		$\|\widetilde \P_h^\perp(\m)\bv_h\|_{W^{-1,p}}\lesssim \|\m\|_{W^{1,\infty}}^2\|\bv_h\|_{W^{-1,p}}$ and
		the triangle inequality concludes the proof for $s=-1$. The case $s=1$ follows by duality.
	\end{proof}

	\begin{proof}[Proof of Lemma~\ref{lem:diff}]
		(a) ($s=0$)
		The projection $\bv_h:=\P_h(\m)\bv$ is given by the equation
		\[(\bv_h,\bphi_h) = (\bv,\bphi_h) \qquad \forall\,\bphi_h\in T_h(\m),\]
		which in view of the definition of $T_h(\m)$ is equivalent to the solution of the saddle point problem 
		(with the Lagrange multiplier $\lambda_h\in V_h$)
		\begin{equation*}
		\begin{alignedat}{3}
		&(\bv_h,\bw_h) + (\m\cdot\bw_h,\lambda_h) &&{}= (\bv,\bw_h) \quad &&\forall \bw_h \in V_h^3, \\
		&(\m\cdot\bv_h, \mu_h)  &&{}= 0  &&\forall \mu_h \in V_h.
		\end{alignedat}
		\end{equation*}
		By the first equation, we also obtain the identity $\PPPh(\m\lambda_h)=(\mathbf{I}-\P_h(\m))\bv_h$, which will be used below.
		Furthermore, $\widetilde \bv_h :=\P_h(\widetilde\m)\bv$ is given by the same system with $\widetilde \m$ in place of $\m$, 
		yielding a corresponding Lagrange multiplier $\widetilde \lambda_h$. Hence, the differences $\be_h := \bv_h - \widetilde \bv_h$ and 
		$\delta_h := \lambda_h - \widetilde\lambda_h$ satisfy 
		\begin{equation*}
		\begin{alignedat}{3}
		&(\be_h,\bw_h) + (\m\cdot\bw_h,\delta_h) &&{}= -(\bw_h,(\m-\widetilde\m)\widetilde\lambda_h) \quad &&\forall \bw_h \in V_h^3, \\
		&(\m\cdot\be_h, \mu_h)  &&{}= - ((\m-\widetilde\m)\cdot\widetilde\bv_h,\mu_h) \quad &&\forall \mu_h \in V_h.
		\end{alignedat}
		\end{equation*}
		The classical results on saddle-point problems (see \cite[Proposition~2.1]{brezzi}) require two inf-sup conditions to be satisfied.
		First, 
		\[\inf_{q_h\in V_h}\sup_{\bv_h\in V_h^3}\frac{(\m\cdot\bv_h,q_h)}{\|\bv_h\|_{H^s }\|q_h\|_{H^{-s}}}>0\]
		holds uniformly in $h$
		due to Lemma~\ref{lemma:definite}. 
		Second, 
		\[\inf_{\bw_h\in T_h(\m)}\sup_{\bv_h\in T_h(\m)}\frac{(\bv_h,\bw_h)}{\|\bv_h\|_{H^s }\|\bw_h\|_{H^{-s} }}>0\]
		holds uniformly in $h$ due to the stability estimates from Lemma~\ref{lem:stab} 
		(noting that $\bv_h=\P_h(\m)\bv_h$ and $\bw_h=\P_h(\m)\bw_h$ for $\bv_h,\bw_h\in T_h(\m)$).
		For the above saddle-point problems, these bounds for $s=0$ give us an $L^2$ bound for 
		$\be_h=\P_h(\m)\bv-\P_h(\widetilde\m)\bv$: From \cite{brezzi} we obtain
		\[\| \widetilde\bv_h \|_{L^2} + \| \widetilde\lambda_h \|_{L^2} \lesssim \| \bv \|_{L^2}\]
		and
		\begin{equation*}
		\| \be_h  \|_{L^2} + \| \delta_h \|_{L^2} \lesssim \| (\m-\widetilde\m)\widetilde\lambda_h \|_{L^2} +
		\| (\m-\widetilde\m)\cdot\widetilde\bv_h \|_{L^2}.
		\end{equation*}
		With the stability from Lemma~\ref{lem:stab} and Lemma~\ref{lemma:definite}, we also obtain
		\begin{equation*}
		\|\widetilde\bv_h\|_{L^\infty} + \|\widetilde\lambda_h\|_{L^\infty}\lesssim \|\P_h(\widetilde\m)\bv\|_{L^\infty} 
		+ \|(\mathbf{I}-\P_h(\widetilde\m))\bv\|_{L^\infty}
		\lesssim\|\bv\|_{L^\infty}.
		\end{equation*}
		Altogether, this implies
		\begin{equation*}
		\| \be_h  \|_{L^2} + \| \delta_h \|_{L^2}\lesssim \| \m-\widetilde\m\|_{L^p}\|\bv \|_{L^q}
		\end{equation*}
		for $(p,q)\in\{(2,\infty),(\infty,2)\}$.

		(b) ($s=1$) For the $H^1(\Om)$-estimate, we introduce the Riesz mapping $J_h$ between $V_h\subset H^1(\Om)$ 
		and its dual $V_h\subset H^{1}(\Om)'$, i.e., the isometry defined by
		\[ (v_h, J_h \psi_h)_{H^1 } = \langle v_h, \psi_h \rangle \qquad \forall v_h\in {V_h},\ \psi_h \in {V_h}.\]
		By $\mathbf{J}_h:= \mathbf{I}\otimes J_h$ we denote the corresponding vector-valued mapping on $V_h^3$.
		We consider the bilinear form on  ${V_h^3}\times {V_h^3}$ defined by 
		\[a_h(\bv_h,\bw_h)= \langle \bv_h,  \mathbf{J}_h^{-1}\bw_h \rangle, \quad \bv_h,\bw_h\in {V_h^3},\]
		and reformulate the saddle-point problem for $(\bv_h,\lambda_h)\in V_h^3\times V_h \subset H^1(\Om)^3\times H^1(\Om)'$ as
		\begin{equation*}
		\begin{alignedat}{3}
		&a_h(\bv_h,\bw_h) + \langle\m\cdot {\mathbf{J}_h^{-1}\bw_h},\lambda_h\rangle &&{}= a(\bv,\bw_h)\quad &&\forall \bw_h \in V_h^3, \\
		&\langle\m\cdot\bv_h, {J_h^{-1}\mu_h} \rangle &&{}= 0 \quad  &&\forall \mu_h \in V_h.
		\end{alignedat}
		\end{equation*}
		{As in the case $s=0$ (algebraically it is the same system), we have $\bv_h=\P_h(\m)\bv$ and $\PPPh(\m \lambda_h ) 
			= (\mathbf{I}-\P_h(\m))\bv$.} The system for $\be_h = \bv_h - \widetilde \bv_h$ and
		$\delta_h = \lambda_h - \widetilde\lambda_h$ reads
		\begin{equation*}
		\begin{alignedat}{3}
		&a_h(\be_h,\bw_h) + \langle\m\cdot{\mathbf{J}_h^{-1}\bw_h},\delta_h\rangle 
		&&{}= -\langle (\m-\widetilde\m)\cdot{\mathbf{J}_h^{-1}\bw_h},\widetilde\lambda_h\rangle 
		\quad &&\forall \bw_h \in V_h^3, \\
		&\langle\m\cdot \be_h, {J_h^{-1}\mu_h}\rangle &&{}= - \langle(\m-\widetilde\m)\cdot\widetilde\bv_h,{J_h^{-1}\mu_h}\rangle \quad  &&\forall \mu_h \in V_h.
		\end{alignedat}
		\end{equation*}
		The above inf-sup bounds for $s=1$ and $s=-1$ 
		are precisely the inf-sup conditions that need to be satisfied for these generalized saddle-point problems (see~\cite[Theorem~2.1]{ciarlet}), 
		whose right-hand sides are bounded by 
		\[|a_h(\bv,\bw_h)| \leqslant \| \bv \|_{H^1} \, \| \mathbf{J}_h^{-1} \bw_h \|_{H^{-1}} \simeq \| \bv \|_{H^1} \, \| \bw_h \|_{H^{1}}\]
		and
		\begin{align*}
		&| \langle (\m-\widetilde\m)\cdot{\mathbf{J}_h^{-1}\bw_h},\widetilde\lambda_h\rangle | \lesssim 
		\| (\m-\widetilde\m)\widetilde \lambda_h \|_{H^1}\, \| \bw_h \|_{H^1},\\
		& | \langle(\m-\widetilde\m)\cdot\widetilde\bv_h,{J_h^{-1}\mu_h}\rangle | \leqslant 
		\| (\m-\widetilde\m)\cdot \widetilde \bv_h \|_{H^1}\,\| \mu_h  \|_{H^{1}}.
		\end{align*}
		{
			As in the case $s=0$, we obtain from Lemma~\ref{lem:stab} and Lemma~\ref{lemma:definite} that
			\begin{align*}
			\|\widetilde\bv_h\|_{W^{1,\infty}} + \|\widetilde\lambda_h\|_{W^{1,\infty}}
			&{}\lesssim \|\P_h(\widetilde\m)\bv\|_{W^{1,\infty}} + \|(\mathbf{I}-\P_h(\widetilde\m))\bv\|_{W^{1,\infty}}
			\\
			&{} \lesssim\|\bv\|_{W^{1,\infty}}.
			\end{align*}
			Hence, we obtain from~\cite[Theorem~2.1]{ciarlet}, for $(p,q)\in \{(2,\infty),(\infty,2)\}$,
			\begin{align*}
			\| \be_h \|_{H^1} &{}\lesssim \| (\m-\widetilde\m)\widetilde \lambda_h \|_{H^1} +
			\| (\m-\widetilde\m)\cdot \widetilde\bv_h \|_{H^1}\\
			&{}\lesssim \sum_{s'=0}^1\Big(\| \m-\widetilde\m \|_{H^1}\,  \| \widetilde\lambda_h  \|_{W^{1-s',q}} +
			\| \m-\widetilde\m \|_{W^{s',p}}\, \| \widetilde\bv_h \|_{W^{1-s',q}}\Big)
			\\
			&{} \lesssim \sum_{s'=0}^1\| \m-\widetilde\m \|_{W^{s',p}} \, \| \bv \|_{W^{1-s',q}}.
			\end{align*}
			This implies the $H^1(\Om)^3$ estimate and hence concludes the proof. }
	\end{proof}

	\begin{proof}[Proof of Lemma~\ref{lem:gal}]
		Since $\P_h(\m)\bv$ is the Galerkin approximation of the saddle point problem for $\P(\m)\bv$ (as in the previous proof),
		the C\'ea lemma for saddle-point problems (see~\cite[Theorem~2.1]{brezzi}) shows in $L^2$
		\begin{align*}
		&{} \|(\P_h(\m)-\P(\m))\bv\|_{L^2} \\
		&{}\lesssim \inf_{(\bw_h,\mu_h)\in V_h^3\times V_h}\Big(\|\P(\m)\bv - \bw_h\|_{L^2} +\|\m\cdot\bv - \mu_h\|_{L^2} \Big)\\
		&{}\lesssim
		h^{r+1 }\| \m\|_{W^{r+1,\infty}}\|\bv\|_{H^{r+1}}
		\end{align*}
		and similarly in $H^1$, using \cite[Theorem~2.1]{ciarlet},
		\begin{align*}
		&{} \|(\P_h(\m)-\P(\m))\bv\|_{H^1} \\
		&{}\lesssim \inf_{(\bw_h,\mu_h)\in V_h^3\times V_h}\Big(\|\P(\m)\bv - \bw_h\|_{H^1} +\|\m\cdot\bv - \mu_h\|_{H^1} \Big)\\
		&{}\lesssim
		h^r\| \m\|_{W^{r+1,\infty}}\|\bv\|_{H^{r+1}}.
		\end{align*} %
		This concludes the proof.
	\end{proof}

	\section{Consistency error and error equation}\label{Se:full-discr}
	
	To study the consistency errors, we find it instructive to separate the issues of consistency for the time and space discretizations. 
	Therefore, we first show defect estimates for the semidiscretization in time, and then turn to the full discretization. 
	
	\subsection{Consistency error of the semi-discretization in time} \label{subsec:cons}
	
	The order of both the fully implicit $k$-step BDF method, described by the coefficients
	$\delta_0,\dotsc,\delta_k$ and $1,$ and the explicit $k$-step BDF method,
	that is the method  described by the coefficients
	$\delta_0,\dotsc,\delta_k$ and $\gamma_0,\dotsc,\gamma_{k-1},$ is $k,$ i.e.,
	\begin{equation}
	\label{order}
	\sum_{i=0}^k(k-i)^\ell \delta_i=\ell k^{\ell-1}=
	\ell \sum_{i=0}^{k-1}(k-i-1)^{\ell-1} \gamma_i,\quad \ell=0,1,\dotsc,k.  
	\end{equation}

	We first rewrite the linearly implicit $k$-step BDF method \eqref{BDF1} in strong form,
	\begin{equation}
	\label{BDF1str}
	\alpha \dot{\bm{m}}^n+\widehat{\bm{m}}^n\times \dot{\bm{m}}^n=\P(\widehat{\bm{m}}^n)( \varDelta \bm{m}^n+\bm{H}^n),
	\end{equation}
	with Neumann boundary conditions.
	
	The consistency error $\bm{d}^n$ 
	of the linearly implicit $k$-step BDF method \eqref{BDF1str} for the solution $\bm{m}$ is the defect 
	by which the exact solution misses satisfying \eqref{BDF1str}, and is given by 
	\begin{equation}
	\label{cons-err}
	\bm{d}^n =\alpha \dot{\bm{m}}^n_\star+\widehat{\bm{m}}^n_\star\times \dot{\bm{m}}^n_\star
	-\P(\widehat{\bm{m}}^n_\star)( \varDelta \bm{m}^n_\star+\bm{H}^n)
	\end{equation}
	for $n=k,\dotsc,N$, where we use the notation $\bm{m}^n_\star = \m(t_n)$ and
	\begin{equation}
	\label{eq:extrapolation and discr derivative for exact solution}
	\begin{aligned}
	&\widehat{\bm{m}}^n_\star = \sum_{j=0}^{k-1} \gamma_j \ms^{n-j-1}\Big/\Big| \sum_{j=0}^{k-1} \gamma_j \ms^{n-j-1}\Big|, 
	\\
	&\dot{\bm{m}}^n_\star = \P(\widehat{\bm{m}}^n_\star) \frac1\tau \sum_{j=0}^{k} \delta_j \ms^{n-j} \in T(\widehat{\bm{m}}^n_\star).
	\end{aligned}
	\end{equation}
	Note that the definition of $\dms^n$ contains the projection $\P(\wms^n)$, while $\dot\m^n$ was defined without a projection 
	(see the first formula in \eqref{shn1n}), since $\dot\m^n = \P(\wm^n)\dot\m^n$ is automatically satisfied due to the constraint 
	in \eqref{BDF1}.  
	
	The consistency error is bounded as follows.
	
	\begin{lemma}\label{lemma:cons}
		If the solution of the LLG equation \eqref{llg-projection} has the regularity
		\[
		\m \in C^{k+1}([0,\bar t\,], L^2(\Om)^3) \cap C^1([0,\bar t\,], L^\infty(\Om)^3) \ \ \text{and} \ \  \varDelta \bm{m} +\bm{H} \in C([0,\bar t\,], L^\infty(\Om)^3),
		\]
		then the consistency error \eqref{cons-err} is bounded by
		\[
		\|\bm{d}^n \|_{L^2(\Om)^3}\leqslant C\tau^k
		\]
		for $n=k,\dotsc,N$.
	\end{lemma}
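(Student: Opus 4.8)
The plan is to subtract from \eqref{cons-err} the exact equation \eqref{llg-projection} evaluated at $t=t_n$, namely $\alpha\partial_t\m(t_n)+\m(t_n)\times\partial_t\m(t_n)=\P(\m(t_n))(\varDelta\m(t_n)+\bm{H}^n)$, which contributes zero. Writing $\m^n=\m(t_n)$ and $\partial_t\m^n=\partial_t\m(t_n)$ for brevity, this expresses the defect as
\[
\bm{d}^n = \alpha\bigl(\dms^n-\partial_t\m^n\bigr) + \bigl(\wms^n\times\dms^n-\m^n\times\partial_t\m^n\bigr) - \bigl(\P(\wms^n)-\P(\m^n)\bigr)(\varDelta\m^n+\bm{H}^n),
\]
and it suffices to bound each of the three summands by $C\tau^k$ in $L^2(\Om)^3$.

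First I would collect the preparatory estimates. Set $\tp^n=\tfrac1\tau\sum_{j=0}^k\delta_j\ms^{n-j}$ and $\tm^n=\sum_{j=0}^{k-1}\gamma_j\ms^{n-j-1}$. Taylor expansion of $\m$ about $t_n$ combined with the order conditions \eqref{order} yields $\|\tp^n-\partial_t\m^n\|_{L^2}\leqslant C\tau^k$ and $\|\tm^n-\m^n\|_{L^2}\leqslant C\tau^k$, using $\m\in C^{k+1}([0,\bar t\,],L^2(\Om)^3)$. Two more elementary facts are needed. Since $\sum_j\delta_j=\delta(1)=0$, the $j=0$ term drops out of $\tp^n$ and one obtains the crude bound $\|\tp^n\|_{L^\infty}\leqslant C$ with $C=\bigl(\sum_{j=1}^k j|\delta_j|\bigr)\|\partial_t\m\|_{C([0,\bar t\,],L^\infty)}$, requiring only $\m\in C^1([0,\bar t\,],L^\infty(\Om)^3)$; and the same regularity together with $\sum_j\gamma_j=1$ gives $\|\tm^n-\m^n\|_{L^\infty}\leqslant C\tau$, so for $\tau\leqslant\bar\tau$ the denominator obeys $|\tm^n|\geqslant\tfrac12$ a.e., hence $\wms^n=\tm^n/|\tm^n|$ is unambiguously defined with $|\wms^n|=1$, and from $\bigl|\,1-|\tm^n|\,\bigr|\leqslant|\m^n-\tm^n|$ and $|\tm^n|^{-1}\leqslant2$ one gets $\|\wms^n-\m^n\|_{L^2}\leqslant C\tau^k$.

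Now the three summands. The third one is immediate from Lemma~\ref{lemma:projection errors}: $\|(\P(\wms^n)-\P(\m^n))(\varDelta\m^n+\bm{H}^n)\|_{L^2}\leqslant2\|\varDelta\m^n+\bm{H}^n\|_{L^\infty}\|\wms^n-\m^n\|_{L^2}\leqslant C\tau^k$. For the first summand, I would write $\dms^n-\partial_t\m^n=\P(\wms^n)\tp^n-\partial_t\m^n$ as $(\P(\wms^n)-\P(\m^n))\partial_t\m^n+(\P(\m^n)\partial_t\m^n-\partial_t\m^n)+\P(\wms^n)(\tp^n-\partial_t\m^n)$; the middle bracket vanishes because $|\m^n|=1$ forces $\partial_t\m^n\in\T(\m^n)$, the first bracket is $O(\tau^k)$ by Lemma~\ref{lemma:projection errors} and $\m\in C^1([0,\bar t\,],L^\infty)$, and the last bracket is $O(\tau^k)$ since $\P(\wms^n)$ has operator norm at most $1$ on $L^2(\Om)^3$. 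For the second summand, write $\wms^n\times\dms^n-\m^n\times\partial_t\m^n=(\wms^n-\m^n)\times\dms^n+\m^n\times(\dms^n-\partial_t\m^n)$; the second term is controlled by the bound just obtained together with $\|\m^n\|_{L^\infty}=1$, and the first term is bounded by $\|\wms^n-\m^n\|_{L^2}\|\dms^n\|_{L^\infty}$, where $\|\dms^n\|_{L^\infty}\leqslant\|\tp^n\|_{L^\infty}\leqslant C$ because $\P(\wms^n)$ is a pointwise orthogonal projection. Adding the three $O(\tau^k)$ contributions completes the proof.

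The genuinely delicate point — and the main obstacle — is the bookkeeping of norms: the order-$k$ estimates are available only in $L^2$ (all that $\m\in C^{k+1}([0,\bar t\,],L^2)$ provides), so the three quantities that must be taken in $L^\infty$, namely $\varDelta\m^n+\bm{H}^n$, $\partial_t\m^n$ and the merely bounded $\dms^n$, have to be arranged to multiply an $L^2$-small factor rather than being differentiated $k$ times. The identities $\P(\m^n)\partial_t\m^n=\partial_t\m^n$ (normality of $\m$) and $\delta(1)=0$ (consistency of the BDF formula) are precisely what make this arrangement close; beyond that, everything is routine Taylor, Cauchy--Schwarz and Young estimation.
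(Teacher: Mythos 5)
Your proposal is correct and follows essentially the same route as the paper: both subtract the LLG equation at $t_n$ to rewrite the defect in terms of $\dms^n-\partial_t\m(t_n)$, $\wms^n-\m(t_n)$ and the projection difference $\P(\wms^n)-\P(\m(t_n))$, bound the projection difference via Lemma~\ref{lemma:projection errors}, and use Taylor expansion together with the order conditions \eqref{order} (the paper writes this out as a Peano-kernel remainder) for the two BDF/extrapolation errors, always pairing an $L^\infty$-bounded factor with an $L^2$-small one. The only cosmetic differences are that you make the $L^\infty$ bound $\|\dms^n\|_{L^\infty}\leqslant C$ explicit where the paper leaves it implicit, and that you pass through the lower bound $|\tm^n|\geqslant\tfrac12$ to estimate $\wms^n-\m^n$, whereas the paper uses the slightly sharper identity $|\bm{a}-\bm{b}/|\bm{b}||\leqslant 2|\bm{a}-\bm{b}|$ (valid for any nonzero $\bm{b}$), which avoids that step.
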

	%
	
	\begin{proof} We begin by rewriting the equation for the defect as
		\begin{equation}
		\label{cons2}
		\begin{aligned}
		\bm{d}^n
		&{}=\alpha \dot{\bm{m}}^n_\star+\widehat{\bm{m}}^n_\star\times \dot{\bm{m}}^n_\star
		-\P(\bm{m}^n_\star)( \varDelta \bm{m}^n_\star+\bm{H}^n)\\
		&{}\qquad-\big (\P(\widehat{\bm{m}}^n_\star)-\P(\bm{m}^n_\star)\big )( \varDelta \bm{m}^n_\star+\bm{H}^n).
		\end{aligned}
		\end{equation}
		In view of \eqref{llg-projection}, we have
		\[\P(\bm{m}^n_\star)( \varDelta \bm{m}^n_\star+\bm{H}^n)=
		\alpha\, \partial_t\bm{m}(t_n) + \bm{m}^n_\star\times \partial_t\bm{m}(t_n),\]
		and can rewrite \eqref{cons2} as
		\begin{equation}
		\nonumber
		\begin{aligned}
		\bm{d}^n&{}=\alpha \big (\dot{\bm{m}}^n_\star-\partial_t\bm{m}(t_n)\big )
		+\big (\widehat{\bm{m}}^n_\star\times \dot{\bm{m}}^n_\star-\ms^n\times \partial_t\bm{m}(t_n)\big )\\
		&{}-\big (\P(\widehat{\bm{m}}^n_\star)-\P(\bm{m}^n_\star)\big )( \varDelta \bm{m}^n_\star+\bm{H}^n),
		\end{aligned}
		\end{equation}
		i.e.,
		\[\begin{aligned}
		\bm{d}^n&{}=\alpha\big (\dot{\bm{m}}^n_\star-\partial_t\bm{m}(t_n)\big )+(\widehat{\bm{m}}^n_\star-\bm{m}^n_\star)\times \dot{\bm{m}}^n_\star+
		\bm{m}^n_\star\times \big (\dot{\bm{m}}^n_\star-\partial_t\bm{m}(t_n)\big )\\
		&{}-\big (\P(\widehat{\bm{m}}^n_\star)-\P(\bm{m}^n_\star)\big )( \varDelta \bm{m}^n_\star+\bm{H}^n).
		\end{aligned}\]
		Therefore,
		\begin{equation}
		\label{split1}
		\bm{d}^n=\alpha\dot{\bm{d}}^n+\widehat{\bm{d}}^n\times \dot{\bm{m}}^n_\star+\bm{m}^n_\star\times \dot{\bm{d}}^n
		-\big (\P(\widehat{\bm{m}}^n_\star)-\P(\bm{m}^n_\star)\big )( \varDelta \bm{m}^n_\star+\bm{H}^n),
		\end{equation}
		with
		\begin{equation}
		\label{split2}
		\dot{\bm{d}}^n:=\dot{\bm{m}}^n_\star-\partial_t\bm{m}(t_n),\quad \widehat{\bm{d}}^n:=\widehat{\bm{m}}^n_\star-\bm{m}^n_\star.
		\end{equation}
		%
		%

		Now, in view of the first estimate in Lemma \ref{lemma:projection errors}, we have
		\[\| \big (\P(\widehat{\bm{m}}^n_\star)-\P(\bm{m}^n_\star)\big ) ( \varDelta \bm{m}^n_\star+\bm{H}^n) \|_{L^2 } 
		\leqslant C \|\widehat{\bm{m}}^n_\star-\bm{m}^n_\star\|_{L^2 },\]
		i.e.,
		\begin{equation}
		\label{cons2nn}
		\| \big (\P(\widehat{\bm{m}}^n_\star)-\P(\bm{m}^n_\star)\big ) ( \varDelta \bm{m}^n_\star+\bm{H}^n) \|_{L^2 }
		\leqslant C \|\widehat{\bm{d}}^n\|_{L^2 }. 
		\end{equation}
		Therefore, it suffices to estimate $\dot{\bm{d}}^n$ and $\widehat{\bm{d}}^n$. 
		
		To estimate $\widehat{\bm{d}}^n$, we shall proceed in two steps. First we shall estimate the extrapolation error
		\begin{equation}
		\label{cons5}
		\sum\limits^{k-1}_{j=0}\gamma_j\bm{m}_\star^{n-j-1}-\bm{m}^n_\star
		\end{equation}
		and then $\widehat{\bm{d}}^n.$
		
		By Taylor expanding about $t_{n-k},$ the leading terms of order up to $k-1$ cancel,
		due to the second equality in \eqref{order}, and we obtain 
		\begin{equation}
		\label{eq:Peano kernel for extrapolation}
		\begin{aligned}
		\sum\limits^{k-1}_{i=0}\gamma_i\bm{m}_\star^{n-i-1}-\bm{m}^n_\star
		=\frac 1{(k-1)!}\Bigg [&{} \sum\limits^{k-1}_{j=0}\gamma_j \int_{t_{n-k}}^{t_{n-j-1}}(t_{n-j-1}-s)^{k-1}\bm{m}^{(k)}(s) \d s\\
		&{}-\int_{t_{n-k}}^{t_n}(t_n-s)^{k-1}\bm{m}^{(k)}(s) \d s\Bigg ],
		\end{aligned}
		\end{equation}
		with $\bm{m}^{(\ell)}:=\frac {\partial^\ell \bm{m}} {\partial t^\ell},$ whence
		\begin{equation}
		\label{cons6}
		\Big \|\sum\limits^{k-1}_{i=0}\gamma_i\bm{m}_\star^{n-i-1}-\bm{m}^n_\star\Big \|_{L^2 } \leqslant C\tau^k.
		\end{equation}
		%
		Now, for a normalized vector $\bm{a}$ and a non-zero vector $\bm{b},$ we have
		\[\bm{a}-\frac {\bm{b}}{|\bm{b}|}=(\bm{a}-\bm{b})+\frac 1{|\bm{b}|}(|\bm{b}|-|\bm{a}|)\bm{b},\]
		whence
		\[\big |\bm{a}-\frac {\bm{b}}{|\bm{b}|}\big |\leqslant 2|\bm{a}-\bm{b}|.\]
		Therefore, \eqref{cons6} yields
		\begin{equation}
		\label{cons7}
		\|\widehat{\bm{d}}^n \|_{L^2 }\leqslant C\tau^k.
		\end{equation}
		
		To bound $\dot{\bm{d}}^n,$ we use the fact that $\P(\m(t_n))\partial_t\m(t_n)= \partial_t\m(t_n)\in T(\m(t_n))$, 
		so that we have
		\begin{align*}
		\dot{\bm{d}}^n &{}= \P(\widehat{\bm{m}}^n_\star) \frac1\tau \sum_{j=0}^{k} \delta_j \m(t_{n-j}) - \partial_t\m(t_n) 
		\\
		&{}= \P(\widehat{\bm{m}}^n_\star) \Bigl( \frac1\tau \sum_{j=0}^{k} \delta_j \m(t_{n-j}) - \partial_t\m(t_n) \Bigr)
		+ \bigl( \P(\widehat{\bm{m}}^n_\star) - \P(\m(t_n)) \bigr) \partial_t\m(t_n).
		\end{align*}
		By Lemma~\ref{lemma:projection errors} and \eqref{cons7}, we have for the last term
		\[
		\| \bigl( \P(\widehat{\bm{m}}^n_\star) - \P(\m(t_n)) \bigr) \partial_t\m(t_n) \|_{L^2 }\leqslant C\tau^k.
		\]
		By Taylor expanding the first term about $t_{n-k},$ we see that, due to the order conditions of the implicit BDF method,
		i.e., the first equality in \eqref{order}, the leading  terms of order up to $k-1$ cancel, and we obtain
		\begin{equation}
		\label{cons3}
		\begin{aligned}
		\frac1\tau \sum_{j=0}^{k} \delta_j \m(t_{n-j}) - \partial_t\m(t_n)=\frac 1{k!}\Bigg [ \frac1\tau\sum\limits^k_{j=0} 
		&{}\delta_j\!\int_{t_{n-k}}^{t_{n-j}}(t_{n-j}-s)^k\bm{m}^{(k+1)}(s)\d s\\
		&{}-k \int_{t_{n-k}}^{t_n}(t_n-s)^{k-1}\bm{m}^{(k+1)}(s) \d s\Bigg ],
		\end{aligned}
		\end{equation}
		%
		whence
		\begin{equation}
		\label{cons4}
		\| \dot{\bm{d}}^n \|_{L^2 }\leqslant C\tau^k,
		\end{equation}
		provided the solution $\bm{m}$ is sufficiently regular.  
		Now, \eqref{split1}, \eqref{cons2nn}, \eqref{cons4},  and \eqref{cons7} yield 
		\begin{equation}
		\label{cons8}
		\|\bm{d}^n \|_{L^2 }\leqslant C\tau^k.
		\end{equation}
		This is the desired consistency estimate, which is valid for BDF methods of arbitrary order $k$.
	\end{proof}	
	
	\subsection{Consistency error of the full discretization}
	
	We define the Ritz projection $\RRh\colon H^1(\varOmega)\to V_h$ corresponding to the Poisson--Neumann problem 
	via
	\begin{equation*}
	\bigl(\nabla \RRh \varphi,\nabla \psi\bigr) + \bigl(\RRh\varphi,1\bigr)\bigl(\psi,1\bigr)= 	\bigl(\nabla \varphi,\nabla \psi\bigr)
	+\bigl(\varphi,1\bigr)\bigl(\psi,1\bigr)
	\end{equation*}
	for all $\psi\in V_h$, and we denote $\RRRh =\Id\otimes \RRh \colon H^1(\varOmega)^3\to V_h^3$. We denote again 
	the $L^2$-orthogonal projections onto the finite element space by $\PPh\colon L^2(\Om)\to V_{h}$ and
	$\PPPh=\Id\otimes\PPh\colon L^2(\Om)^3 \to V_h^3$.
	As in the previous section, we write $\P_h({\bm{m}})$ 
	for the $L^2$-orthogonal projection onto the discrete tangent space at $\bm{m}$.
	We insert the following quantities, which are related to the exact solution,
	\begin{align}
	\nonumber
	&\bm{m}^n_{\star,h} = \RRRh\m(t_n), \\ 
	\label{wmshn}
	&\widehat{\bm{m}}^n_{\star,h} = \sum_{j=0}^{k-1} \gamma_j \bm{m}_{\star,h}^{n-j-1}\Big/\Big| \sum_{j=0}^{k-1} \gamma_j \bm{m}_{\star,h}^{n-j-1}\Big|, \\
	\nonumber
	&\dot{\bm{m}}^n_{\star,h} = \P_h(\widehat{\bm{m}}_{\star,h}^n)\frac1\tau \sum_{j=0}^{k} \delta_j \bm{m}_{\star,h}^{n-j} \in T_h(\widehat{\bm{m}}_{\star,h}^n),
	\end{align}
	into the linearly implicit $k$-step BDF method \eqref{BDF1-h}
	and obtain a defect $\bm{d}_h^n\in T_h(\widehat{\bm{m}}_{\star,h}^n)$ from
	\begin{equation}
	\label{BDF1str-full}
	\alpha \bigl( \dot{\bm{m}}_{\star,h}^n , \bm{\varphi}_h \bigr) 
	+ \bigl( \widehat{\bm{m}}_{\star,h}^n \times \dot{\bm{m}}_{\star,h}^n , \bm{\varphi}_h\bigr) 
	= - \bigl( \nabla\bm{m}_{\star,h}^n , \nabla\bm{\varphi}_h\bigr) 
	+ \bigl(\bm{H}^n,\bm{\varphi}_h\bigr) 
	+ \bigl(\bm{d}_h^n,\bm{\varphi}_h\bigr)
	\end{equation}
	for all $\bm{\varphi}_h\in T_h(\widehat{\bm{m}}_{\star,h}^n)$. By definition, there holds $(\RRh\varphi,1)=(\varphi,1)$ (this can be seen by testing with $\psi=1$) and hence
	\begin{equation*}
	\bigl(\nabla \bm{m}_{\star,h}^n,\nabla\bm{\varphi}\bigr) = \bigl(\nabla \bm{m}(t_n),\nabla\bm{\varphi}\bigr)=-\bigl(\varDelta \bm{m}(t_n),\bm{\varphi}\bigr).
	\end{equation*}
	Thus,  we obtain the consistency error for the full discretization by
	\begin{equation}
	\label{cons-err-full}
	\bm{d}^n_h =\P_h(\widehat{\bm{m}}_{\star,h}^n)\bm{D}_h^n \quad\text{with}\quad
	\bm{D}_h^n=\alpha \dot{\bm{m}}_{\star,h}^n+\widehat{\bm{m}}_{\star,h}^n\times \dot{\bm{m}}_{\star,h}^n-\varDelta \bm{m}(t_n)-\bm{H}(t_n)
	\end{equation}
	for $n=k,\dotsc,N$.
	The consistency error is bounded as follows.
	\begin{lemma}\label{lemma:cons-full}
		If the solution of the LLG equation \eqref{llg-projection} has the regularity
		\begin{align*}
		&\m \in C^{k+1}([0,\bar t\,], L^2(\Om)^3) \cap C^1([0,\bar t\,], W^{{r+1},\infty}(\Om)^3) \quad\text{and}\quad  \\
		&\varDelta \bm{m} +\bm{H} \in C([0,\bar t\,], W^{r+1,\infty}(\Om)^3),
		\end{align*}
		then the consistency error \eqref{cons-err-full} is bounded by
		\[
		\|\bm{d}^n_h \|_{L^2(\Om)^3}\leqslant C(\tau^k+h^r)
		\]
		for $n$ with $k\tau\leqslant n\tau\leqslant \bar t$.
	\end{lemma}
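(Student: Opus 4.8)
The plan is to bound the full-discretization defect by comparison with the time-semidiscrete defect $\bd^n$ of Lemma~\ref{lemma:cons}, which is already $O(\tau^k)$ in $L^2$. First note that, since $\dms^n\in T(\wms^n)$ and $\wms^n\times\dms^n$ is orthogonal to $\wms^n$, the defect $\bd^n$ can be written in projected form as $\bd^n=\P(\wms^n)\bm{D}^n$ with $\bm{D}^n:=\alpha\dms^n+\wms^n\times\dms^n-\varDelta\m(t_n)-\bm{H}(t_n)$, the exact-solution analogue of $\bm{D}^n_h$. Then split
\[
\bdh^n=\P_h(\wmsh^n)\bm{D}^n+\P_h(\wmsh^n)\bigl(\bm{D}^n_h-\bm{D}^n\bigr);
\]
since $\P_h(\wmsh^n)$ is an $L^2$-orthogonal projection, it suffices to show $\|\P_h(\wmsh^n)\bm{D}^n\|_{L^2}\leq C(\tau^k+h^{r+1})$ and $\|\bm{D}^n_h-\bm{D}^n\|_{L^2}\leq Ch^{r+1}$.

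For the first quantity I insert the $L^2$-projection onto $T_h$ at the \emph{smooth} normalised extrapolation $\wms^n$ of the exact solution:
\[
\P_h(\wmsh^n)\bm{D}^n=\bigl(\P_h(\wmsh^n)-\P_h(\wms^n)\bigr)\bm{D}^n+\bigl(\P_h(\wms^n)-\P(\wms^n)\bigr)\bm{D}^n+\bd^n.
\]
The last term is $O(\tau^k)$ by Lemma~\ref{lemma:cons}. In the middle term, $\wms^n$ has $|\wms^n|=1$ and is bounded in $W^{r+1,\infty}(\Om)^3$ (normalisation of a bounded-below combination of the $\m(t_{n-j-1})\in W^{r+1,\infty}$), so Lemma~\ref{lem:gal} bounds it by $Ch^{r+1}\|\bm{D}^n\|_{H^{r+1}}$; here $\|\bm{D}^n\|_{H^{r+1}}\leq C$ uniformly in $n,\tau$, because $\dms^n=\P(\wms^n)\bv$ with $\bv:=\tfrac1\tau\sum_{j=0}^k\delta_j\m(t_{n-j})$ a divided difference bounded in $W^{r+1,\infty}\hookrightarrow H^{r+1}$ (using $\sum_j\delta_j=0$ and $\m\in C^1([0,\bar t\,],W^{r+1,\infty})$), and $\varDelta\m+\bm{H}\in C([0,\bar t\,],W^{r+1,\infty})$. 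For the first term I use that $\P_h(\m)$ depends on its argument only through $\PPPh$ (its defining relation tests only against $V_h^3\supseteq T_h(\m)$), hence it equals $\bigl(\P_h(\wmsh^n)-\P_h(\wms^n)\bigr)\PPPh\bm{D}^n$ with $\PPPh\bm{D}^n\in V_h^3$, and Lemma~\ref{lem:diff}(i), applied with the smooth $\wms^n$ as the $W^{1,\infty}$-argument, bounds it by $C_R\|\wmsh^n-\wms^n\|_{L^2}\,\|\PPPh\bm{D}^n\|_{L^\infty}\leq Ch^{r+1}$, using $L^\infty$-stability of $\PPPh$ and the bound $\|\wmsh^n-\wms^n\|_{L^2}\leq Ch^{r+1}$ recorded below.

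For $\|\bm{D}^n_h-\bm{D}^n\|_{L^2}$ the time-discretization error has already cancelled and what remains is a purely spatial comparison. Writing $\dmsh^n=\P_h(\wmsh^n)\RRRh\bv$ and splitting
\[
\dmsh^n-\dms^n=\bigl(\P_h(\wmsh^n)-\P_h(\wms^n)\bigr)\RRRh\bv+\P_h(\wms^n)\bigl(\RRRh\bv-\bv\bigr)+\bigl(\P_h(\wms^n)-\P(\wms^n)\bigr)\bv,
\]
the three terms are $O(h^{r+1})$ by Lemma~\ref{lem:diff}(i) (with $\|\RRRh\bv\|_{L^\infty}\leq C$), by the Ritz estimate together with $\|\P_h(\wms^n)\|_{L^2\to L^2}\leq1$, and by Lemma~\ref{lem:gal}, respectively; thus $\|\dmsh^n-\dms^n\|_{L^2}\leq Ch^{r+1}$. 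Moreover $\|\dmsh^n\|_{L^\infty}\leq C\|\wmsh^n\|_{W^{1,\infty}}^2\|\RRRh\bv\|_{L^\infty}\leq C$ by the maximum-norm stability of $\P_h$ (Lemma~\ref{lem:stab}). Writing $\wmsh^n\times\dmsh^n-\wms^n\times\dms^n=(\wmsh^n-\wms^n)\times\dmsh^n+\wms^n\times(\dmsh^n-\dms^n)$ and using $|\wms^n|=1$ then gives $\|\bm{D}^n_h-\bm{D}^n\|_{L^2}\leq Ch^{r+1}$. Here $\|\wmsh^n-\wms^n\|_{L^2}\leq Ch^{r+1}$ because the unnormalised numerators differ by $\sum_j\gamma_j(\RRRh-\mathbf{I})\m(t_{n-j-1})$, which the Ritz estimate bounds by $Ch^{r+1}$ in $L^2$, both numerators are $\geq\tfrac12$ in $L^\infty$ for $h,\tau$ small (they are $O(h^{r+1}|\log h|+\tau)$-close to $\m(t_n)$, which in particular shows $\wmsh^n$ never triggers the arbitrary-unit-vector fallback), and $\bm{a}\mapsto\bm{a}/|\bm{a}|$ is Lipschitz there; the same argument, together with $W^{1,\infty}$-stability of $\RRRh$, gives the uniform bounds $\|\wmsh^n\|_{W^{1,\infty}}\leq C$ and $\|\RRRh\bv\|_{L^\infty}\leq C$ used above. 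Collecting everything, $\|\bdh^n\|_{L^2}\leq C(\tau^k+h^{r+1})\leq C(\tau^k+h^r)$.

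I expect the main difficulty to be not a single hard estimate — the substantial work is done by Lemmas~\ref{lem:gal}, \ref{lem:diff}, \ref{lem:stab} and~\ref{lemma:cons} — but the bookkeeping in reducing to them. The delicate point is that $\P_h$ appears at the two only mildly regular arguments $\wmsh^n$ and $\wms^n$: comparing the two forces one to route the difference through $\PPPh$ so that Lemma~\ref{lem:diff} (stated only for finite-element arguments) can be used, while $\wms^n$ must simultaneously be kept as a genuine $W^{r+1,\infty}$ function for Lemma~\ref{lem:gal}; and the quadratic cross term $\wmsh^n\times\dmsh^n$ requires a uniform $L^\infty$-bound on $\dmsh^n$, hence the maximum-norm stability of $\P_h$ and uniform $W^{1,\infty}$-control of $\wmsh^n$, the latter resting on the assumed $W^{r+1,\infty}$-regularity of $\m$ and on $W^{1,\infty}$-stability of the Ritz projection.
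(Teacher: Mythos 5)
Your proof is correct, but it uses a genuinely different decomposition from the paper's. The paper defines $\bm{D}^n$ with the fully continuous quantities $\partial_t\m(t_n),\,\m(t_n)$, uses the anchor identity $\P(\m(t_n))\bm{D}^n=0$, and telescopes $\bdh^n=\P_h(\wmsh^n)\bm{D}_h^n-\P(\m(t_n))\bm{D}^n$ into four terms, one of which, $\P_h(\wmsh^n)(\bm{D}_h^n-\bm{D}^n)$, still mixes time and space contributions and is then treated by re-running the Lemma~\ref{lemma:cons} expansions together with Ritz estimates. You instead take $\bm{D}^n$ with the time-semidiscrete quantities $\dms^n,\wms^n$ and exploit the identity $\bd^n=\P(\wms^n)\bm{D}^n$, so that the entire $O(\tau^k)$ contribution sits cleanly in $\bd^n$ (bounded by Lemma~\ref{lemma:cons}), while the residual pieces $\bm{D}_h^n-\bm{D}^n$ and the projection telescope $\P_h(\wmsh^n)\to\P_h(\wms^n)\to\P(\wms^n)$ carry only spatial errors. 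Both routes invoke the same lemmas (\ref{lem:gal}, \ref{lem:diff}, \ref{lem:stab}, \ref{lemma:cons}) and reach the same bound; your separation of time and space is the tidier bookkeeping, and you additionally spell out the step of routing the difference $(\P_h(\wmsh^n)-\P_h(\wms^n))\bm{D}^n$ through $\PPPh$ so that Lemma~\ref{lem:diff} (stated only for arguments in $V_h^3$) applies legitimately, a point the paper's own proof glosses over. One caution: your $O(h^{r+1})$ claims for $\|\RRRh\bv-\bv\|_{L^2}$ and $\|\wmsh^n-\wms^n\|_{L^2}$ implicitly rely on Aubin--Nitsche duality, which the paper deliberately refrains from ("we do not exploit the Aubin--Nitsche duality here"), since $H^2$-regularity of the Neumann problem on a general Lipschitz domain is not assumed; you should weaken these to $O(h^r)$, which is all that is available and is enough for the stated bound.
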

	%
	
	\begin{proof} We begin by defining
		\begin{equation*}
		\bm{D}^n:=\alpha \partial_t \bm{m}(t_n)+\bm{m}(t_n)\times \partial_t \bm{m}(t_n)-\varDelta \bm{m}(t_n)-\bm{H}(t_n)
		\end{equation*}
		and note that $\P(\bm{m}_\star^n)\bm{D}^n=0$. Here we denote again $\bm{m}_\star^n=\m(t_n)$ and in the following 
		we use also the notations $\dot{\bm{m}}_{\star}^n$ and $\widehat{\bm{m}}_{\star}^n$ as defined 
		in \eqref{eq:extrapolation and discr derivative for exact solution}.
		With this, we rewrite the equation for the defect as
		\begin{align*}
		\bm{d}_h^n = {}& \P_h(\widehat{\bm{m}}_{\star,h}^n)\bm{D}_h^n - \P(\bm{m}_\star^n)\bm{D}^n \\
		= {}& \P_h(\widehat{\bm{m}}_{\star,h}^n) \bigl(\bm{D}_h^n - \bm{D}^n\bigr) 
		+ \bigl( \P_h(\widehat{\bm{m}}_{\star,h}^n) - \P_h(\widehat{\bm{m}}_{\star}^n) \bigr) \bm{D}^n \\ 
		&{}+ \bigl( \P_h(\widehat{\bm{m}}_{\star}^n) - \P(\widehat{\bm{m}}_{\star}^n) \bigr) \bm{D}^n 
		+ \bigl( \P(\widehat{\bm{m}}_{\star}^n) - \P({\bm{m}}_{\star}^n) \bigr) \bm{D}^n \\
		\equiv {}& \text{\emph{ I $+$ II $+$ III $+$ IV}} .
		\end{align*}

		For the term \emph{IV} we have by Lemma~\ref{lemma:projection errors}
		\[
		\| \text{\emph{IV}} \|_{L^2} \leqslant 2 \| \widehat{\bm{m}}_{\star}^n - {\bm{m}}_{\star}^n \|_{L^2} \, \| \bm{D}^n \|_{L^\infty},
		\]
		where the last term $\widehat{\bm{m}}_{\star}^n - {\bm{m}}_{\star}^n$ has been bounded in the $L^2$ norm by $C\tau^k$ in the proof of Lemma~\ref{lemma:cons}. 
		
		The term \emph{III} is estimated using the first bound from Lemma~\ref{lem:gal}, under our regularity assumptions, as 
		\begin{equation*}
		\| \text{\emph{III}}\|_{L^2} \leqslant C h^r .
		\end{equation*}
		
		For the bound on \emph{II} we use Lemma~\ref{lem:diff} ($i$)  (with $p=2$ and $q=\infty$), to obtain
		\begin{equation*}
		\| \text{\emph{II}} \|_{L^2} \leqslant C_R \| \widehat{\bm{m}}_{\star,h}^n - \widehat{\bm{m}}_{\star}^n \|_{L^2}\|\bm{D}^n\|_{L^\infty} ,
		\end{equation*}
		where, using \eqref{ab-vectors}, we obtain 
		\[
		\| \widehat{\bm{m}}_{\star,h}^n - \widehat{\bm{m}}_{\star}^n \|_{L^2} \leqslant
		\frac{2 \| \sum_{i=1}^k \gamma_i (\RRRh-\mathbf{I}) \m_*^{n-i} \|_{L^2}}{\min \bigl| \sum_{i=1}^k \gamma_i \m_*^{n-i} \bigr|}
		\leqslant Ch^r.
		\]
		The denominator is bounded from below by $1 - C \tau^k$, because $|\m_*^n|=1$ and 
		$| \sum_{i=1}^k \gamma_i \m_*^{n-i} - \m_*^n | \leqslant C\tau^k$. 
		For the first term we have 
		\begin{align*}
		\| \text{\emph{I}} \|_{L^2} &{}\leqslant \|\bm{D}^n-\bm{D}_h^n\|_{L^2} 
		\\
		&{}\leqslant
		\alpha\|\partial_t\bm{m}(t_n)-\dot{\bm{m}}_{\star,h}^n\|_{L^2}
		+\|\bm{m}(t_n)\times \partial_t \bm{m}(t_n)-\widehat{\bm{m}}_{\star,h}^n\times \dot{\bm{m}}_{\star,h}^n\|_{L^2}.
		\end{align*} 
		The terms $\|\partial_t\bm{m}(t_n)-\dot{\bm{m}}_{\star}^n\|_{L^2}$ and $\|\bm{m}_{\star}^n\times \partial_t \bm{m}(t_n)
		-\widehat{\bm{m}}_{\star}^n\times \dot{\bm{m}}_{\star}^n\|_{L^2}$ can be handled as in the proof of Lemma~\ref{lemma:cons}.
		Standard error estimates for the Ritz projection $\RRh$ (we do not exploit the Aubin--Nitsche duality here) imply
		\begin{equation*}
		\|(\mathbf{I}-\RRRh)\dot{\bm{m}}_{\star}^n\|_{L^2}\leqslant c\, h^r \|\dot{\bm{m}}_\star^n\|_{H^{r+1}}.
		\end{equation*}
		Together this yields, under the stated regularity assumption,
		\[
		\| \text{\emph{I}} \|_{L^2} \leqslant C(\tau^k+h^r),
		\]
		and the result follows.
	\end{proof}

	\subsection{Error equation}
	
	We recall, from \eqref{BDF1-h}, the fully discrete problem with the linearly implicit BDF method: find $\dmh^n\in \T_h(\wmh^n)$ 
	such that for all $\bphih \in \T_h(\wmh^n)$,
	\begin{equation}
	\label{BDF1-h - recalled}
	\alpha ( \dmh^n,\bphih ) + (\wmh^n \times \dmh^n,\bphih) 
	+ (\nb \mh^n,\nb \bphih ) = ( \bm{H}(t_n),\bphih) .
	\end{equation}
	%
	
	Then, similarly as we have done in Section~\ref{Se:cont-perturb}, we first rewrite \eqref{BDF1str-full}: for all $\bphih \in \T_h(\wmh^n)$,
	\begin{equation}
	\label{eq:BDF1str-full - rewritten}
	\alpha ( \dmsh^n , \bphih) 
	+ ( \wmsh^n \times \dmsh^n , \bphih ) 
	+ ( \nb \msh^n , \nb \bphih) \\
	= (\brh^n,\bphih) 
	\end{equation}
	with
	\begin{equation}\label{rhn}
	\brh^n = - (\P_h(\wmh^n) - \P_h(\wmsh^n)) (\varDelta \ms(t_n) + \bm{H}(t_n)) + \bdh^n.
	\end{equation}
	
	The error $\beh^n = \mh^n - \msh^n$ satisfies the error equation that is obtained by subtracting \eqref{eq:BDF1str-full - rewritten} 
	from \eqref{BDF1-h - recalled}. We use the notations 
	\begin{align}
	\label{wehn}
	\weh^n &{}= \wmh^n - \wmsh^n , 
	\\
	\label{eq:s definition}
	\deh^n &{}= \dot {\bm m}_h^n - \dmsh^n = \frac1\tau \sum_{j=0}^k \delta_j \beh^{n-j} + \bs_h^n , \\ 
	\nonumber &{} \qquad\qquad \qquad \qquad \text{with }\quad 
	\bs_h^n=(\mathbf{I} - \P_h(\wmsh^n)) \frac1\tau \sum_{j=0}^k \delta_j \msh^{n-j}.
	\end{align}
	We have the following bound for $\bs_h^n$.
	
	\begin{lemma}\label{lemma:sn}
		Under the regularity assumptions of Lemma~\ref{lemma:cons-full}, we have
		\begin{equation}
		\label{sn2}
		\| \bs_h^n \|_{H^1(\Om)^3} \leqslant C(\tau^k+ h^r).
		\end{equation}
	\end{lemma}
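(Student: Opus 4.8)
The plan is to turn the estimate into a best‑approximation bound for $\RRRh\bm g^n$ in $T_h(\wmsh^n)$, where $\bm g^n:=\frac1\tau\sum_{j=0}^{k}\delta_j\m(t_{n-j})$, so that $\frac1\tau\sum_{j=0}^k\delta_j\msh^{n-j}=\RRRh\bm g^n$, and then to split the resulting error into a temporal contribution of size $\tau^k$, handled by the Peano‑kernel consistency estimates of Lemma~\ref{lemma:cons}, and a spatial contribution of size $h^r$, handled by the projection estimates of Section~\ref{section:discrete orthogonal projection}. First I would record the reduction: $\wmsh^n$ is the normalized extrapolation of the $\RRRh\m(t_{n-i-1})$, which are bounded in $W^{r+1,\infty}(\Om)^3$ uniformly in $h$, and — exactly as for \eqref{cons6}--\eqref{cons7} — the denominator in its definition stays above $\tfrac12$; hence $\|\wmsh^n\|_{W^{1,\infty}}\leqslant C$, and by Lemma~\ref{lem:stab} (with $p=2$, $s\in\{0,1\}$) the operator $\Id-\P_h(\wmsh^n)$ is $H^1(\Om)^3$‑bounded with an $h$‑independent constant. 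Since $\bs_h^n=(\Id-\P_h(\wmsh^n))\RRRh\bm g^n$ and $T_h(\wmsh^n)=\kernel(\Id-\P_h(\wmsh^n))$, this gives $\|\bs_h^n\|_{H^1}\leqslant C\,\|\RRRh\bm g^n-\bphih\|_{H^1}$ for every $\bphih\in T_h(\wmsh^n)$, so it suffices to exhibit one good $\bphih$.

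I would choose $\bphih$ from the \emph{continuous} tangential splitting at $\wms^n$: with $\dms^n=\P(\wms^n)\bm g^n\in T(\wms^n)$ as in \eqref{eq:extrapolation and discr derivative for exact solution} and $\Id-\P(\wms^n)=\wms^n(\wms^n)^{T}$, one has $\bm g^n=\dms^n+(\wms^n\cdot\bm g^n)\,\wms^n$; take $\bphih:=\P_h(\wmsh^n)\RRRh\dms^n\in T_h(\wmsh^n)$, so that
\[
\RRRh\bm g^n-\bphih=\RRRh\big((\wms^n\cdot\bm g^n)\,\wms^n\big)+(\Id-\P_h(\wmsh^n))\RRRh\dms^n .
\]
The first term is the temporal part. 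By $H^1$‑stability of $\RRRh$ it is $\leqslant C\,\|\wms^n\|_{W^{1,\infty}}\,\|\wms^n\cdot\bm g^n\|_{H^1}$, and the normal scalar $\wms^n\cdot\bm g^n$ is $O(\tau^k)$ in $H^1$: using $\m(t_n)\cdot\partial_t\m(t_n)=0$,
\[
\wms^n\cdot\bm g^n=\m(t_n)\cdot\big(\bm g^n-\partial_t\m(t_n)\big)+\big(\wms^n-\m(t_n)\big)\cdot\bm g^n ,
\]
and both summands are bounded via the Peano‑kernel representation \eqref{cons3} of the BDF consistency error and the extrapolation estimate behind \eqref{cons6}--\eqref{cons7}, now carried out in the $H^1$ norm, together with the uniform bound $\|\bm g^n\|_{W^{1,\infty}}\leqslant C$ — since $\bm g^n=\frac1\tau\sum_{j}\delta_j\int_{t_n}^{t_{n-j}}\partial_t\m(s)\,\d s$ is an average of $\partial_t\m$ — and the pointwise Lipschitz bound for the normalization used in the proof of Lemma~\ref{lemma:cons} (the inequality preceding \eqref{cons7}).

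The second term is the spatial part: it is the $H^1$‑distance of the Ritz image of the smooth tangent field $\dms^n$ to $T_h(\wmsh^n)$, which I expect to be $O(h^r)$. To bound it one inserts $\PPPh\dms^n$, $\P_h(\wms^n)$ and $\P(\wms^n)$: replacing $\RRRh\dms^n$ by $\PPPh\dms^n$ costs $\lesssim h^r\|\dms^n\|_{H^{r+1}}$; replacing $\P_h(\wmsh^n)$ by $\P_h(\wms^n)$ costs, by Lemma~\ref{lem:diff}$(ii)$, $\lesssim\|\wms^n-\wmsh^n\|_{H^1}\|\PPPh\dms^n\|_{L^\infty}+\|\wms^n-\wmsh^n\|_{L^2}\|\PPPh\dms^n\|_{W^{1,\infty}}$; replacing $\P_h(\wms^n)$ by $\P(\wms^n)$ costs $\lesssim h^r\|\dms^n\|_{H^{r+1}}$ by the $H^1$ bound of Lemma~\ref{lem:gal}; and $(\Id-\P(\wms^n))\dms^n=0$. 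Here one uses that $\dms^n=\P(\wms^n)\bm g^n$ is bounded in $W^{r+1,\infty}$ (again because $\bm g^n$ is an average of $\partial_t\m$ and $\wms^n$ is a normalized extrapolation of smooth data), that $\PPPh$ and $\RRRh$ are $W^{1,\infty}$‑ and $H^{r+1}$‑stable on quasi‑uniform meshes, and — the key point here — that $\wms^n-\wmsh^n=\sum_i\gamma_i(\Id-\RRRh)\m(t_{n-i-1})$ up to the harmless normalization, so that $\|\wms^n-\wmsh^n\|_{H^1}\leqslant C h^r$ carries \emph{no} temporal error.

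Combining the two parts yields $\|\bs_h^n\|_{H^1}\leqslant C(\tau^k+h^r)$. The main obstacle is the temporal part: unlike in Lemma~\ref{lemma:cons}, where consistency is only needed in $L^2$, here the Peano‑kernel and extrapolation arguments must be run in $H^1$ — in particular $\wms^n-\m(t_n)$ and $\bm g^n-\partial_t\m(t_n)$ must be controlled to the full order $\tau^k$ in $H^1$ — so the order has to come out of the exact identity $\m\cdot\partial_t\m=0$ rather than from crude bounds on the individual factors (the naive $H^1$ bound on $\wms^n-\m(t_n)$ being only $O(\tau)$). A secondary, bookkeeping obstacle is to keep every stability constant for $\RRRh$, $\PPPh$ and $\P_h(\cdot)$ independent of $h$, which is precisely the role of Lemmas~\ref{lem:gal}, \ref{lem:diff} and \ref{lem:stab}.
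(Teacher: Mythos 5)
Your proposal is correct, but the route is genuinely different from the paper's. The paper proceeds by a direct cancellation: it subtracts the (approximately) vanishing quantity $(\Id-\P(\wmsh^n))\partial_t\ms^n$ — which is where $\m\cdot\partial_t\m=0$ enters — and then splits $\bs_h^n$ into $(\Id-\P_h(\wmsh^n))(\pa^\tau\msh^n-\partial_t\ms^n)$, bounded via the $H^1$ Peano-kernel/Ritz estimates and Lemma~\ref{lem:stab}, plus $(\P_h(\wmsh^n)-\P(\wmsh^n))\partial_t\ms^n$, bounded by Lemma~\ref{lem:gal}. You instead reformulate the estimate as a best-approximation problem in $T_h(\wmsh^n)$, exploiting $(\Id-\P_h(\wmsh^n))\bphih=0$ for $\bphih\in T_h(\wmsh^n)$ and the $H^1$-boundedness of $\Id-\P_h(\wmsh^n)$ from Lemma~\ref{lem:stab}, and then construct the explicit competitor $\bphih=\P_h(\wmsh^n)\RRRh\dms^n$. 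The resulting split cleanly separates a purely temporal normal component $\RRRh\bigl((\wms^n\cdot\bm g^n)\wms^n\bigr)$, where $\m\cdot\partial_t\m=0$ is used, from a purely spatial piece $(\Id-\P_h(\wmsh^n))\RRRh\dms^n$ carrying no $\tau$-dependence, which you then drive to zero through the chain $\P_h(\wmsh^n)\to\P_h(\wms^n)\to\P(\wms^n)$ via Lemmas~\ref{lem:diff} and~\ref{lem:gal}. Both arguments draw on the same ingredients (Lemmas~\ref{lem:gal}, \ref{lem:diff}, \ref{lem:stab}, the $W^{1,\infty}$ bound~\eqref{eq:wmsh^n W 1,infty bound}, and the $H^1$ Peano-kernel estimates for $\bm g^n-\partial_t\m(t_n)$ and $\wms^n-\m(t_n)$); the paper's version is shorter, while yours gives a conceptually cleaner temporal/spatial separation and, as a bonus, sidesteps the paper's slightly imprecise assertion that $(\Id-\P(\wmsh^n))\partial_t\ms^n$ is exactly zero (it is only $O(\tau^k+h^r)$ in $H^1$, since $\wmsh^n\ne\m(t_n)$). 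One small technical point to tighten when executing your chain of replacements: Lemma~\ref{lem:gal} is stated for $\bv\in H^{r+1}(\Om)^3$, so apply it directly to $\dms^n$ rather than to $\PPPh\dms^n$ (using that $\P_h(\wms^n)\PPPh=\P_h(\wms^n)$ on $L^2$), and absorb the remaining $\P(\wms^n)(\Id-\PPPh)\dms^n$ separately by standard $L^2$-projection approximation; with that adjustment the argument closes.
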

	
	\begin{proof}
		We use Lemmas~\ref{lem:gal}  and~\ref{lem:stab}, and the bounds in the proof of 
		Lemma~\ref{lemma:cons-full}. We start by subtracting 
		$(\mathbf{I} - \P(\wmsh^n)) \partial_t \ms^{n} = 0$, and obtain (with $\pa^\tau \msh^n := \frac1\tau \sum_{j=0}^k \delta_j \msh^{n-j}$)
		\begin{align*}
		\bs_h^n 
		= {}& (\mathbf{I} - \P_h(\wmsh^n)) \pa^\tau \msh^n - (\mathbf{I} - \P(\wmsh^n)) \partial_t \ms^{n} \\
		= {}& (\pa^\tau \msh^n - \partial_t \ms^{n}) - \big( \P_h(\wmsh^n) \pa^\tau \msh^n - \P(\wmsh^n) \partial_t \ms^{n} \big) .
		\end{align*}
		The first term above is bounded as $O(\tau^k + h^r)$ via the techniques of the consistency proofs, Lemma~\ref{lemma:cons} 
		and \ref{lemma:cons-full}. For the second term we have 
		\begin{align*}
		&{} \P_h(\wmsh^n) \pa^\tau \msh^n - \P(\wmsh^n) \partial_t \ms^{n} \\
		&{} =  \P_h(\wmsh^n) (\pa^\tau \msh^n - \partial_t \ms^{n}) + \big( \P_h(\wmsh^n) - \P(\wmsh^n) \big) \partial_t \ms^{n} ,
		\end{align*} 
		where the first term is bounded as $O(\tau^k + h^r)$, using Lemma~\ref{lem:stab} and the previous estimate, while
		the second term is bounded as $O(h^r)$ by the $H^1$ estimate from Lemma~\ref{lem:gal}.
		Altogether, we obtain the stated $H^1$ bound for $\bs_h^n$.
	\end{proof}

	We then have the error equation
	\begin{equation}
	\label{eq:full error equation weak}
	\alpha (\deh^n,\bphih) + (\weh^n \times \dmsh^n,\bphih) +  (\wmh^n \times \deh^n,\bphih) + (\nb \beh^n ,\nb \bphih) = - (\brh^n,\bphih) ,
	\end{equation}
	for all $\bphih \in \T_h(\wmh^n)$, which is to be taken together with \eqref{rhn}--\eqref{eq:s definition}. 
	
	\section{Stability of the full discretization for BDF of orders 1 and 2}\label{Se:orders 1 and 2} 
	For the A-stable BDF methods (those of orders 1 and 2) we obtain the following stability estimate, which is analogous 
	to the continuous perturbation result Lemma~\ref{lemma:perturbation result}.
	\begin{lemma}[Stability for orders $k=1, 2$]
		\label{lemma:stability-full - BDF 1 and 2}
		Consider the  linearly implicit $k$-step BDF discretization~\eqref{BDF1-h} for $k \leqslant 2$ with finite elements of polynomial degree
		$r \geqslant 1$.  Let $\mh^n$ and $\msh^n=\RRRh\m(t_n)$ satisfy equations \eqref{BDF1-h} and \eqref{BDF1str-full}, respectively, 
		and suppose that the exact solution $\m(t)$ is bounded by \eqref{eq:assumptions on m star}  and  $\|\mathbf{H}(t)\|_{L^\infty}\leqslant M$
		for $0 \leqslant t \leqslant \bar t$. 
		%
		%
		Then, for sufficiently small $h\leqslant \bar h$ and $\tau\leqslant \bar\tau$, the error $\beh^n = \mh^n - \msh^n$ 
		satisfies the following bound, for $k\tau \leqslant n\tau \leqslant \bar t$,
		\begin{equation}
		\label{eq:stability bound full - k=1,2}
		\|\beh^n\|_{H^1(\Om)^3}^2
		\leqslant C \Big( \sum_{i=0}^{k-1} \|\beh^i\|_{H^1(\Om)^3}^2 
		+ \tau \sum_{j=k}^n \|\bdh^j\|_{L^2(\Om)^3}^2 + \tau \sum_{j=k}^n \|\bs_h^j\|_{H^1(\Om)^3}^2\Big) ,
		\end{equation}
		where the constant $C$ is independent of $h,\tau$ and $n$, but depends on $\alpha, R, K, M$, and~$\bar t$.
		This estimate holds under the smallness condition that the right-hand side is bounded by $\hat c h$ 
		with a sufficiently small constant $\hat c$ $($note that the right-hand side is of size
		$O((\tau^{k} + h^{r})^2)$ in the case of a sufficiently regular solution$)$.
	\end{lemma}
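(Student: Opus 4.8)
The plan is to mimic the continuous perturbation argument of Lemma~\ref{lemma:perturbation result}, but now in the fully discrete setting, testing the error equation~\eqref{eq:full error equation weak} with the projected discrete time derivative $\bphih = \P_h(\wmh^n)\bigl(\frac1\tau\sum_{j=0}^k\delta_j\beh^{n-j}\bigr)$, which lies in $\T_h(\wmh^n)$ by construction. First I would decompose this test function as $\bphih = \deh^n + \bq_h^n$, where $\deh^n$ is the full discrete derivative of the error (see~\eqref{eq:s definition}) and $\bq_h^n$ collects the correction terms: one piece coming from $\bs_h^n$ (controlled by Lemma~\ref{lemma:sn}) and one piece of the form $-(\P_h(\wmh^n)-\P_h(\wmsh^n))(\text{discrete derivative of }\msh)$, which by Lemma~\ref{lem:diff}$(i)$ is bounded by $C_R\|\weh^n\|_{L^2}$ in $L^2$ and, via Lemma~\ref{lem:diff}$(ii)$, by $C_R\|\weh^n\|_{H^1}$ in $H^1$ — the discrete analogues of~\eqref{eq:q bound}. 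Similarly $\brh^n$ from~\eqref{rhn} is bounded using Lemma~\ref{lem:diff}$(i)$ and Lemma~\ref{lemma:sn}, giving $\|\brh^n\|_{L^2}\leqslant CK\|\weh^n\|_{L^2} + \|\bdh^n\|_{L^2}$, the analogue of~\eqref{r-est}.

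Next I would exploit the G-stability of the BDF methods of orders $1$ and $2$. Testing~\eqref{eq:full error equation weak} with $\bphih=\deh^n + \bq_h^n$, the term $(\wmh^n\times\deh^n,\deh^n)$ vanishes, and the crucial term $(\nb\beh^n,\nb\deh^n)$ is treated through Lemma~\ref{lemma:Dahlquist} of the Appendix: writing $\nb\deh^n = \frac1\tau\sum_{j=0}^k\delta_j\nb\beh^{n-j} + \nb\bs_h^n$, the main part yields a telescoping lower bound $\frac1\tau\bigl(|\nb E_h^{n}|_G^2 - |\nb E_h^{n-1}|_G^2\bigr)$ with the positive definite $G$ from Dahlquist's theory, while the $\nb\bs_h^n$ contribution is absorbed using Lemma~\ref{lemma:sn}. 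The remaining terms — $\alpha\|\deh^n\|_{L^2}^2$, the cross terms $(\weh^n\times\dmsh^n,\cdot)$ and $(\wmh^n\times\deh^n,\bq_h^n)$, and the right-hand side $-(\brh^n,\cdot)$ — are handled by Cauchy–Schwarz and Young, absorbing small multiples of $\alpha\|\deh^n\|_{L^2}^2$ on the left and leaving a right-hand side of the form $c\|\weh^n\|_{H^1}^2 + c\|\bdh^n\|_{L^2}^2 + c\|\bs_h^n\|_{H^1}^2$. One then needs to bound $\|\weh^n\|_{H^1}$ in terms of the $H^1$ errors of previous steps: this uses~\eqref{ab-vectors}-type estimates for the normalized extrapolation together with the lower bound on the denominator, which in turn requires that the previous errors be controlled — this is where the inductive/bootstrap structure enters, and where the $L^\infty$ bounds obtained from the $H^1$ bounds via the finite element inverse inequality $\|\cdot\|_{L^\infty}\lesssim h^{-3/2}\|\cdot\|_{L^2}$ (or $h^{-1/2}\|\cdot\|_{H^1}$ in 3D with the quasi-uniformity) come in, explaining the smallness condition ``right-hand side $\leqslant \hat c h$''.

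The main obstacle is the rigorous handling of this bootstrap: the estimates for $\bq_h^n$, $\brh^n$, and $\weh^n$ via Lemma~\ref{lem:diff} implicitly require that $\wmh^n$ actually takes values near the unit sphere (so that $\P_h(\wmh^n)$ is well-defined and the constants $C_R$ are uniform), and this is only known once one controls $\|\beh^{n-j}\|_{L^\infty}$ for $j=1,\dots,k$, which is obtained from $\|\beh^{n-j}\|_{H^1}$ by the inverse inequality at the cost of the factor $h^{-1/2}$. Thus the argument proceeds by strong induction on $n$: one assumes the stability bound~\eqref{eq:stability bound full - k=1,2} (and hence, under the smallness hypothesis, an $L^\infty$ smallness of the errors) for all indices up to $n-1$, runs the energy estimate above to get the bound at step $n$, and closes the induction via a discrete Gronwall inequality (summing over $n$, using that $G$ is positive definite so $|\nb E_h^{n}|_G^2 \gtrsim \|\nb\beh^n\|_{L^2}^2$, and adding $\|\beh^n\|_{L^2}^2$ via the elementary inequality $\|\beh^n\|_{L^2}^2 \lesssim \|\beh^{n-1}\|_{L^2}^2 + \tau^2\|\deh^n\|_{L^2}^2$ analogous to the continuous $\frac12\diff\|\be\|_{L^2}^2$ step). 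Finally, a regularization argument is not needed here since $\msh^n\in V_h^3$ is automatically smooth, so the discrete test function is legitimate throughout.
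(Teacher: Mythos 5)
Your proposed strategy coincides with the paper's: test the error equation \eqref{eq:full error equation weak} with $\P_h(\wmh^n)$ applied to (essentially) the discrete derivative of the error, decompose this as $\deh^n$ plus a correction controlled through Lemmas~\ref{lem:diff} and~\ref{lemma:sn}, handle the gradient term by Dahlquist's G-stability (Lemma~\ref{lemma:Dahlquist}), bound $\weh^n$ via \eqref{ab-vectors}, and close an $L^\infty$-bootstrap through the finite element inverse inequality, which is precisely what the $\hat c h$ smallness condition is for. The one spot where your sketch is imprecise is the conversion of $\tau\sum_j\|\deh^j\|_{L^2}^2$ into a bound on $\|\beh^n\|_{L^2}^2$: a plain $\lesssim$ with constant $>1$ in $\|\beh^n\|_{L^2}^2\lesssim\|\beh^{n-1}\|_{L^2}^2+\tau^2\|\deh^n\|_{L^2}^2$ would not survive the Gronwall iteration over $O(1/\tau)$ steps, so one needs the sharp $(1+C\tau)$ form for $k=1$ (and its two-term analogue for $k=2$) or, as the paper does, to write $\beh^n=\tau\sum_{j=k}^n\kappa_{n-j}(\deh^j-\bs_h^j-\bm g_h^j)$ using the uniformly bounded coefficients $\kappa_\ell$ of $1/\delta(\zeta)$ provided by zero-stability.
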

	
	Combining Lemmas~\ref{lemma:stability-full - BDF 1 and 2}, \ref{lemma:cons-full} and~\ref{lemma:sn} yields the 
	\emph{proof of Theorem~\ref{theorem:err-bdf-full - BDF 1 and 2}}: These lemmas imply the estimate 
	\begin{equation*}
	\|\beh^n\|_{H^1(\Om)^3} \leqslant \widetilde C (\tau^k + h^r) 
	\end{equation*} 
	in the case of a sufficiently regular solution. Since then $\|\RRRh \m(t_n) - \m(t_n) \|_{H^1(\Om)^3} \leqslant C h^r$ 
	and because of $\mh^n-\m(t_n) = \beh^n + (\RRRh \m(t_n) - \m(t_n))$, this implies the error bound \eqref{err-bdf12}.
	
	The smallness condition imposed in Lemma~\ref{lemma:stability-full - BDF 1 and 2} is satisfied under the very mild CFL condition,
	for a sufficiently small $\bar c > 0$ (independent of $h,\tau$ and $n$),
	\[		\tau^k \leqslant \bar c h^{1/2} .\]
	Taken together, this proves Theorem~\ref{theorem:err-bdf-full - BDF 1 and 2}.

	\begin{proof} (a) \emph{Preparations.}
		The proof of this lemma transfers the arguments of the proof of Lemma~\ref{lemma:perturbation result} to the fully discrete situation, using
		energy estimates obtained by testing with (essentially) the discrete time derivative of the error, as presented in the Appendix, 
		which is based on~Dahlquist's $G$-stability theory. 
		
		However, testing the error equation \eqref{eq:full error equation weak} directly with $\deh^n$ is not possible, 
		since $\deh^n$  is not in the tangent space $\T_h(\wmh^n)$.
		Therefore, as in the proof of Lemma~\ref{lemma:perturbation result}, we again start by showing that the test function 
		$\bphi_h = \P_h(\wmh^n) \deh^n \in \T_h(\wmh^n) \cap H^1(\varOmega)^3$ is a perturbation of $\deh^n$ itself:
		\begin{align*}
		\bphi_h = \P_h(\wmh^n) \deh^n = {}& \P_h(\wmh^n) \dot\m_h^n - \P_h(\wmh^n) \dmsh^n \\
		= {}&  \P_h(\wmh^n) \dot\m_h^n  -  \P_h(\wmsh^n) \dmsh^n + (\P_h(\wmsh^n) - \P_h(\wmh^n)) \dmsh^n .
		\end{align*}
		Here we note that $\P_h(\wmh^n) \dot\m_h^n = \dot\m_h^n \in T_h(\wmh^n)$ by construction of the method 
		\eqref{BDF1-h}, and $\P_h(\wmsh^n) \dmsh^n = \dmsh^n \in T_h(\wmsh^n)$ by the definition of $\dmsh^n$ 
		in \eqref{eq:extrapolation and discr derivative for exact solution}.
		So we have
		\[
		\bphi_h = \dot\m_h^n - \dmsh^n - (\P_h(\wmh^n) - \P(\wmsh^n)) \dmsh^n ,
		\]
		and hence
		\begin{equation}
		\label{eq:de_h^n perturbed - fully discrete}
		\bphi_h = \P_h(\wmh^n) \de^n = \deh^n + \bq_h^n 
		\qquad	 \textnormal{with } \quad \bq_h^n = - (\P_h(\wmh^n) - \P(\wmsh^n)) \dmsh^n.
		\end{equation}
		
		The proof now transfers the proof of the continuous perturbation result 
		Lemma~\ref{lemma:perturbation result} to the discrete situation with some notable differences, which are emphasized here: 
		
		(i) Instead of using the continuous quantities it uses their spatially discrete counterparts, in particular 
		the discrete projections $\P_h(\wmh^n)$ and $\P_h(\wmsh^n)$, defined and studied in Section~\ref{section:discrete orthogonal projection}. 
		In view of the definition \eqref{shn1} and \eqref{wmshn} of  $\wmh^n$ and $\wmsh^n$, respectively, this requires that $\sum_{j=0}^{k-1} \gamma_j \mh^{n-j-1}(x)$ and
		$\sum_{j=0}^{k-1} \gamma_j \msh^{n-j-1}(x)$
		are bounded away from zero uniformly for all $x\in\varOmega$.
		
		(ii) Instead of Lemma~\ref{lemma:projection errors} we use Lemma~\ref{lem:diff} 
		(with $\wmh^n$ and $\wmsh^n$ in the role of $\widetilde{\m}$ and $\m$, respectively) to bound the quantity 
		$\bm{q}_h^n$. This requires that 
		$\wmsh^n$ and $\dmsh^n$ are bounded in $W^{1,\infty}$ independently of $h$.

		\emph{Ad} (i):
		In order to show that $|\sum_{j=0}^{k-1} \gamma_j \mh^{n-j-1}(x)|$ stays close to $1$ for all $x\in\varOmega$, we need 
		to establish an $L^\infty$ bound for the errors  $\beh^{n-j-1} = \mh^{n-j-1} - \msh^{n-j-1}$. 
		
		We use an induction argument and assume that for some time step number $\bar n$ with $\bar n\tau\leqslant \bar t$ we have 
		\begin{equation}
		\label{eq:assumed bounds}
		\|\beh^n\|_{L^\infty} 
		\leqslant \rho,  \quad\  \text{ for } 0 \leqslant n< \bar n ,
		\end{equation}
		where we choose $\rho$ sufficiently small independent of $h$ and $\tau$. (In this proof it suffices to choose $\rho\leqslant 1/(4C_\gamma)$,
		where $C_\gamma = \sum_{j=0}^{k-1} |\gamma_j| = 2^k - 1$.)
		
		Note that the smallness condition of the lemma implies that \eqref{eq:assumed bounds} is satisfied for $\bar n=k$, 
		because for the $L^\infty$ errors of the starting values  we have by an inverse inequality, for $i=0,\dotsc,k-1$,
		\[\| \beh^i \|_{L^\infty} \leqslant Ch^{-1/2} \| \beh^i \|_{H^1} \leqslant Ch^{-1/2}\, (\hat c h)^{1/2} = C\hat c^{1/2} \leqslant \rho,\]
		provided that $\hat c$ is sufficiently small (independent of $\tau$ and $h$), as is assumed.
		
		We will show in part (b) of the proof that with the induction hypothesis \eqref{eq:assumed bounds} we obtain also 
		$\|\beh^{\bar n}\|_{L^\infty} \leqslant \rho$ so that finally we obtain \eqref{eq:assumed bounds} for \emph{all} $\bar n$ 
		with $\bar n \tau \leqslant \bar t$.
		
		Using  reverse and ordinary triangle inequalities,  the  error bound of \cite[Corollary~8.1.12]{BrennerScott} 
		(noting that $\m(t)\in W^{2,\infty}(\varOmega)$ under our assumptions) and the $L^\infty$ boundedness of $\partial_t \m$, 
		and the bound \eqref{eq:assumed bounds}, we estimate
		\begin{equation}
		\label{eq:extrapolation L infty control - pre}
		\begin{aligned}
		&\bigg\| \Big|\sum_{j=0}^{k-1} \gamma_j \mh^{n-j-1}\Big| - 1 \bigg\|_{L^\infty} \!\! =
		\bigg\| \Big|\sum_{j=0}^{k-1} \gamma_j \mh^{n-j-1}\Big| - |\ms^n| \bigg\|_{L^\infty} 
		\!\! \leqslant \bigg\| \sum_{j=0}^{k-1} \gamma_j \mh^{n-j-1} - \ms^n \bigg\|_{L^\infty} 
		\\
		\leq	&\ \Big\|\sum_{j=0}^{k-1} \gamma_j \beh^{n-j-1}\Big\|_{L^\infty}  \!
		+ \bigg\| \sum_{j=0}^{k-1} \gamma_j( \RRRh\ms^{n-j-1} \! -\ms^{n-j-1}) \bigg\|_{L^\infty}  \! + 
		\bigg\| \sum_{j=0}^{k-1} \gamma_j (\ms^{n-j-1} \!  - \ms^{n}) \bigg\|_{L^\infty} 
		\\
		\leqslant &\	\Big\|\sum_{j=0}^{k-1} \gamma_j \beh^{n-j-1}\Big\|_{L^\infty}  +Ch + C\tau
		\leqslant \sum_{j=0}^{k-1} |\gamma_j| \, \cdot \,  \rho  +Ch + C\tau  \leqslant \half,
		\end{aligned}
		\end{equation}
		provided that $h$ and $\tau$ are sufficiently small.
		The same argument also yields that $\bigl\| |\sum_{j=0}^{k-1} \gamma_j \msh^{n-j-1}| - 1 \bigr\|_{L^\infty} \leqslant \frac12$,
		and so we have
		\begin{equation}
		\label{eq:extrapolations L infty control}
		\half \leqslant \Big| \! \sum_{j=0}^{k-1} \! \gamma_j \mh^{n-j-1}(x)\Big| \leqslant \frac32 \andquad \half 
		\leqslant \Big| \! \sum_{j=0}^{k-1} \! \gamma_j \msh^{n-j-1}(x)\Big| \leqslant \frac32  
		\end{equation}
		for all $x \in \Om$. In particular, it follows that  $\wmh^n$  and  $\wmsh^n$ are unambiguously defined.
		
		\emph{Ad} (ii):
		The required $W^{1,\infty}$ bound for $\msh^n =\RRRh\m(t_n)$ follows from the $W^{1,\infty}$-stability of the Ritz projection: 
		by   \cite[Theorem~8.1.11]{BrennerScott} and by the assumed $W^{1,\infty}$ bound~\eqref{eq:assumptions on m star} for $\m(t)$,
		\begin{equation}\label{eq:msh^n W 1,infty bound}
		\| \msh^n \|_{W^{1,\infty}} \leqslant C \| \m(t_n) \|_{W^{1,\infty}} \leqslant CR.
		\end{equation}
		The bounds \eqref{eq:extrapolations L infty control} and~\eqref{eq:msh^n W 1,infty bound} for $n\leqslant \bar n$ imply that also
		\begin{equation}\label{eq:wmsh^n W 1,infty bound}
		\| \wmsh^n \|_{W^{1,\infty}}  \leqslant CR
		\end{equation}
		for $n\leqslant \bar n$ (with a different constant $C$).
		Using this bound in Lemma~\ref{lem:stab} and the assumed $W^{1,\infty}$ bound  \eqref{eq:assumptions on m star} for $\partial_t\m(t)$, 
		we obtain with
		$\delta(\zeta)/(1-\zeta)=\sum_{\ell=1}^k (1-\zeta)^{\ell-1} /\ell =: \sum_{j=0}^{k-1} \mu_j \zeta^j$ that
		\begin{align*}
		\| \dmsh^n \|_{W^{1,\infty}} &= \| \P_h (\wmsh^n) \frac1\tau \sum_{j=0}^k \delta_j \ms^{n-j} \|_{W^{1,\infty}}
		\\
		&=  \| \P_h (\wmsh^n) \sum_{j=0}^{k-1} \mu_j \frac1\tau (\ms^{n-j} - \ms^{n-j-1})\|_{W^{1,\infty}}
		\\
		&= \| \P_h (\wmsh^n) \sum_{j=0}^{k-1} \mu_j \frac1\tau \int_{t_{n-j-1}}^{t_{n-j}} \partial_t \m(t) \,\d t \|_{W^{1,\infty}}
		\\
		&\leqslant C_R  \, \|  \sum_{j=0}^{k-1} \mu_j \frac1\tau \int_{t_{n-j-1}}^{t_{n-j}} \partial_t \m(t) \,\d t \|_{W^{1,\infty}}
		\\
		&\leqslant C_R \sum_{j=0}^{k-1} |\mu_j|\, R.
		\end{align*}
		We can now establish a bound for $\bqh^n$ as defined in \eqref{eq:de_h^n perturbed - fully discrete}, using Lemma~\ref{lem:diff}  
		together with the above $W^{1,\infty}$ bounds for $\wmsh^n$ and $\dmsh^n$ to obtain
		\begin{equation}
		\label{eq:qhn bound}
		\|\bqh^n\|_{L^2} \leqslant c \|\weh^n\|_{L^2} \andquad \|\nb \bqh^n\|_{L^2} \leqslant c \|\weh^n\|_{H^1} .
		\end{equation}
		With the $W^{1,\infty}$ bound of $\wmsh^n$ we also obtain a bound of $\brh^n$ defined in
		\eqref{rhn}. Using Lemma~\ref{lem:diff} ($i$) and recalling the $L^\infty$ bound of
		$\varDelta \m + \bm H$
		of \eqref{eq:assumptions on m star}, we find that $\brh^n$ is bounded by
		\begin{equation}
		\label{eq:rhn bound}
		\begin{aligned}
		\|\brh^n\|_{L^2} 
		\leqslant {}& \|(\P_h(\wmh^n) - \P_h(\wmsh^n)) ( \varDelta \ms^n + \bm H^n)\|_{L^2} + \|\bd_h^n \|_{L^2} \\
		\leqslant {}& c \|\weh^n\|_{L^2} + \|\bdh^n \|_{L^2} .
		\end{aligned}
		\end{equation}

		\medskip
		
		%
		%
		
		
		(b) \emph{Energy estimates.} For $n \leqslant \bar n$ with $\bar n$ of \eqref{eq:assumed bounds}, 
		we test the error equation \eqref{eq:full error equation weak}  with $\bphi_h = \deh^n + \bqh^n$ and obtain
		\begin{align*}
		\alpha (\deh^n,\deh^n + \bqh^n) 
		+ (\weh^n \times \dmsh^n,\deh^n + \bqh^n) 
		{}& + (\wmh^n \times \deh^n,\deh^n + \bqh^n) \\
		{}& +  (\nb \beh^n,\nb (\deh^n + \bqh^n)) = - (\brh^n,\deh^n + \bqh^n) .
		\end{align*}
		%
		%
		By collecting the terms, and using the fact that $(\wmh^n \times \deh^n,\deh^n)=0$, we altogether obtain
		\begin{align*}
		\alpha \|\deh^n\|_{L^2}^2 + (\nb \beh^n,\nb \deh^n) 
		{}& = - \alpha (\deh^n,\bqh^n) - (\weh^n \times \dmsh^n,\deh^n + \bqh^n) \\
		{}& - (\wmh^n \times \deh^n,\bqh^n) - (\nb \beh^n,\nb \bqh^n) 
		- (\brh^n,\deh^n + \bqh^n) .
		\end{align*}
		We now estimate the term $(\nb \beh^n,\nb \deh^n)$ on the left-hand side from below using Dahlquist's Lemma~\ref{lemma:Dahlquist}, 
		so that the ensuing relation \eqref{multiplier} yields
		\begin{equation*}
		(\nb \beh^n,\nb \deh^n) \geqslant \frac{1}{\tau} \Big(\|\nb \bfE_h^n\|_G^2 - \|\nb \bfE_h^{n-1}\|_G^2 \Big)  + (\nb \beh^n, \nb \bs_h^n),
		\end{equation*}
		where $\bfE_h^n = (\beh^{n-k+1},\dotsc,\beh^n)$ and the $G$-weighted semi-norm is given by
		\begin{equation*}
		\|\nb \bfE_h^n\|_G^2 
		= \sum_{i,j=1}^kg_{ij}(\nb \beh^{n-k+i},\nb \beh^{n-k+j}) .
		\end{equation*}
		This semi-norm satisfies the relation
		\begin{equation}
		\label{eq:G norm equivalence}
		\gamma^- \sum_{j=1}^k \|\nb \beh^{n-k+j}\|_{L^2}^2 
		\leqslant \|\nb \bfE_h^n\|_G^2
		\leqslant \gamma^+ \sum_{j=1}^k \|\nb \beh^{n-k+j}\|_{L^2}^2 ,
		\end{equation}
		where $\gamma^-$ and $\gamma^+$ are the smallest and largest eigenvalues 
		of the positive definite symmetric matrix $G=(g_{ij})$ from Lemma~\ref{lemma:Dahlquist}. 
		
		The remaining terms are estimated using the Cauchy--Schwarz inequality and $\|\wmh^n\|_{L^\infty}=1$; we altogether obtain
		\begin{align*}
		&\alpha \|\deh^n\|_{L^2}^2 +  \frac{1}{\tau} \Big( \! \|\nb \bfE_h^n\|_G^2 - \|\nb \bfE_h^{n-1}\|_G^2 \! \Big) 
		\leqslant 
		\alpha \|\deh^n\|_{L^2} \|\bqh^n\|_{L^2} 
		+ \|\weh^n\|_{L^2} ( \|\deh^n\|_{L^2} + \|\bqh^n\|_{L^2} ) \\
		&\quad  + \|\deh^n\|_{L^2} \|\bqh^n\|_{L^2} 
		+  \|\nb \beh^n\|_{L^2}( \|\nb \bq^n\|_{L^2} + \|\nb \bs_h^n\|_{L^2})
		+ \|\brh^n\|_{L^2} ( \|\deh^n\|_{L^2} + \|\bqh^n\|_{L^2} ) .
		\end{align*}
		We now show an $L^2$ error bound for $\weh^n$ in terms of $(\beh^{n-j-1})_{j=0}^{k-1}$. Using the fact that for $\bm{a},\bm{b}\in\R^3\setminus\{0\}$,
		\begin{equation}
		\label{ab-vectors}
		\left| \frac{\bm{a}}{|\bm{a}|} - \frac{\bm{b}}{|\bm{b}|} \right| = \left| \frac{(|\bm{b}|-|\bm{a}|) \bm{a} 
			+ |\bm{a}| (\bm{a}-\bm{b})}{|\bm{a}| \ |\bm{b}|} \right| \leqslant 2\, \frac{|\bm{a}-\bm{b}|}{|\bm{b}|},
		\end{equation}
		and the lower bounds in \eqref{eq:extrapolations L infty control} for both $| \sum_{j=0}^{k-1}\gamma_j\mh^{n-j-1}|$ and $| \sum_{j=0}^{k-1}\gamma_j\msh^{n-j-1} |$,
		we can estimate
		\begin{equation}
		\label{eq:hat e L2 estimate}
		\|\weh^n\|_{L^2} = \left\| \frac{\sum_{j=0}^{k-1}\gamma_j\mh^{n-j-1}} 
		{\Big |\sum_{j=0}^{k-1}\gamma_j\mh^{n-j-1}\Big |} -
		\frac{\sum_{j=0}^{k-1}\gamma_j\msh^{n-j-1}} 
		{\Big |\sum_{j=0}^{k-1}\gamma_j\msh^{n-j-1}\Big |} \right\|_{L^2}
		\leqslant C \sum_{j=0}^{k-1} \|\beh^{n-j-1}\|_{L^2}^2 .
		\end{equation}
		To show a similar bound for $\|\nb\weh^n\|_{L^2}$ we need the following two observations: First, 
		the $W^{1,\infty}$ bounds for $\msh^{n-j-1}$ from \eqref{eq:msh^n W 1,infty bound} imply $W^{1,\infty}$ boundedness for $\wmsh^n$ by
		\begin{align*}
		\left| \pa_j \left( \frac{\bm{b}}{|\bm{b}|} \right) \right| \leqslant \left|  \frac{\pa_j \bm{b}}{|\bm{b}|} \right| 
		+ \left| \frac{\bm{b}(\pa_j \bm{b},\bm{b})}{|\bm{b}|^3} \right| .
		\end{align*}
		Second, similarly  we have 
		\begin{align*}
		\left| \pa_j \left( \frac{\bm{a}}{|\bm{a}|} - \frac{\bm{b}}{|\bm{b}|} \right) \right| 
		\leqslant &\ \left| \frac{\pa_j \bm{a}}{|\bm{a}|} - \frac{\pa_j \bm{b}}{|\bm{b}|} \right| 
		+ \left| \frac{\bm{a} (\pa_j\bm{a},\bm{a})|\bm{b}|^3 - \bm{b} (\pa_j\bm{b},\bm{b})|\bm{a}|^3}{|\bm{a}|^3 \ |\bm{b}|^3} \right| \\
		\leqslant &\ \left| \frac{\pa_j \bm{a}}{|\bm{a}|} - \frac{\pa_j \bm{b}}{|\bm{b}|} \right| 
		+ \frac{||\bm{a}|^3 - |\bm{b}|^3| |\pa_j\bm{b}|}{|\bm{a}|^3 \ |\bm{b}|}   
		+ \frac{| \bm{a} (\pa_j\bm{a},\bm{a}) - \bm{b} (\pa_j\bm{b},\bm{b}) | }{|\bm{b}|^3} \\
		\leqslant &\ \left| \frac{\pa_j \bm{a}}{|\bm{a}|} - \frac{\pa_j \bm{b}}{|\bm{b}|} \right| 
		+ \frac{|\bm{a} - \bm{b}| (|\bm{b}|^2 + |\bm{b}| |\bm{a}| + |\bm{a}|^2) |\pa_j\bm{b}|}{|\bm{a}|^3 \ |\bm{b}|} \\
		&\ + \frac{|\bm{a}|^2 |\pa_j\bm{a} - \pa_j\bm{b}|}{|\bm{b}|^3} 
		+ \frac{|\bm{a}| |\pa_j\bm{b}| |\bm{a} - \bm{b}|}{|\bm{b}|^3}
		+ \frac{|\bm{a} - \bm{b}| |\pa_j\bm{b}|}{|\bm{b}|^2} .
		\end{align*}
		Combining these two observations, again with $\mh$ and $\msh$ in the role of $\bm{a}$ and $\bm{b}$, 
		respectively, and  the upper and lower bounds from \eqref{eq:extrapolations L infty control} altogether yield
		\begin{equation}
		\label{eq:hat e H1 estimate}
		\|\nb \weh^n\|_{L^2}^2 \leqslant C \sum_{j=0}^{k-1} \|\beh^{n-j-1}\|_{H^1}^2 .
		\end{equation}
		
		We estimate further using Young's inequality and absorptions into the term $\|\de^n\|_{L^2}^2$, 
		together with the bounds in \eqref{eq:qhn bound} and \eqref{eq:rhn bound}, to obtain
		\begin{equation*}
		\alpha \frac12 \|\deh^n\|_{L^2}^2 + \frac{1}{\tau} \Big(\|\nb \bfE_h^n\|_G^2 - \|\nb \bfE_h^{n-1}\|_G^2 \Big) 
		\leqslant  c \sum_{j=0}^{k} \|\beh^{n-j}\|_{H^1}^2 + c \|\bdh^n\|_{L^2}^2 + c \|\nb\bs_h^n\|_{L^2}^2.
		\end{equation*}
		Multiplying both sides by $\tau$, summing up from $k$ to $n \leqslant \bar n$, and using an absorption yield
		\begin{align*}
		&\ \alpha \frac12 \tau \sum_{j=k}^n \|\deh^j\|_{L^2}^2 +  \|\nb \bfE_h^n\|_G^2 \\
		\leqslant &\ \|\nb \bfE_h^{k-1}\|_G^2 + c \tau \sum_{j=k}^n \|\beh^j\|_{H^1}^2 
		+ c \tau \sum_{j=k}^n \big( \|\bdh^j\|_{L^2}^2 +  \|\bs_h^j\|_{H^1}^2 \big) + c \sum_{i=0}^{k-1} \|\beh^i\|_{L^2}^2 .
		\end{align*}
		We then arrive, using \eqref{eq:G norm equivalence}, at
		\begin{equation}
		\label{stab17}
		\begin{aligned}
		\alpha \frac12 \tau \sum_{j=k}^n \|\deh^j\|_{L^2}^2 +  \|\nb \beh^n\|_{L^2}^2 
		&{}\leqslant  c \tau \sum_{j=k}^n \|\beh^j\|_{H^1}^2 
		+ c \tau \sum_{j=k}^n \! \big( \|\bdh^j\|_{L^2}^2 +  \|\bs_h^j\|_{H^1}^2 \big)\\ 
		&{}+ c \sum_{i=0}^{k-1} \|\beh^i\|_{L^2}^2 ,
		\end{aligned}	
		\end{equation}
		with  $c$ depending on $\alpha$.
		
		Similarly as in the time continuous case in the proof of Lemma~\ref{lemma:perturbation result}, we connect 
		$\|\beh^n\|_{L^2}^2$ and $\tau \sum_{j=k}^n \|\deh^j\|_{L^2}^2$. We rewrite the identity
		\[
		\frac1\tau \sum_{j=0}^k \delta_j \beh^{n-j} = \deh^n - \bs_h^n, \quad n \geqslant k,
		\]
		as 
		\[
		\frac1\tau \sum_{j=k}^n \delta_{n-j} \beh^j = \dot\beh^n - \bs_h^n -\bm{g}_h^n, \quad n \geqslant k,
		\]
		with $\delta_\ell=0$ for $\ell>k$ and where
		\[
		\bm{g}_h^n := \frac1\tau \sum_{i=0}^{k-1} \delta_{n-i} \beh^i
		\]
		depends only on the starting errors and satisfies $\bm{g}_h^n=0$ for $n\geqslant 2k$.
		With the inverse power series of $\delta(\zeta)$,
		\begin{equation*}
		\kappa(\zeta) = \sum_{n=0}^\infty \kappa_n \zeta^n := \frac 1{\delta(\zeta)} ,
		\end{equation*}
		we then have, for $n \geqslant k$,
		\begin{equation*}
		\beh^n = \tau \sum_{j=k}^n \kappa_{n-j} (\deh^j -\bs_h^j -\bm{g}_h^j).
		\end{equation*}
		By the zero-stability of the BDF method of order $k \leqslant 6$, the coefficients $\kappa_n$ are uniformly bounded: 
		$|\kappa_n| \leqslant c$ for all $n\geqslant 0$. Therefore we obtain via the Cauchy--Schwarz inequality
		\begin{align*}
		\| \beh^n \|_{L^2}^2 &{} \leqslant 2\tau^2 \Bigl\| \sum_{j=k}^n \kappa_{n-j} (\dot \beh_j -\bs_h^j)\Bigr\|_{L^2}^2 + 
		2\tau^2 \Bigl\| \sum_{j=k}^{2k-1} \kappa_{n-j} \bm{g}_h^j \Bigr\|_{L^2}^2\\
		&{}\leqslant (2n\tau)\tau c^2 \sum_{j=k}^n \| \dot \beh^j- \bs_h^j \|_{L^2}^2 + 2\tau^2 c^2 k \sum_{j=k}^{2k-1} \| \bm{g}_h^j \|_{L^2}^2\\
		&{} \leqslant C\tau \sum_{j=k}^n \| \deh^j \|_{L^2}^2 + C \tau \sum_{j=k}^n \|\bs_h^j \|_{L^2}^2 + C \sum_{i=0}^k \| \beh^i \|_{L^2}^2.
		\end{align*}
		Inserting this bound into \eqref{stab17} then yields
		\begin{equation*}
		\alpha \|\beh^n\|_{L^2}^2 +  \|\nb \beh^n\|_{L^2}^2
		\leqslant  c \tau \sum_{j=k}^n \|\beh^j\|_{H^1}^2 
		+ c \tau \! \sum_{j=k}^n \! \big( \|\bdh^j\|_{L^2}^2 +  \|\bs_h^j\|_{H^1}^2 \big) + c \! \sum_{i=0}^{k-1} \|\beh^i\|_{L^2}^2 ,
		\end{equation*}
		and  a discrete Gronwall inequality  implies the stated stability result for $n \leqslant \bar n$. It then follows from this stability bound, 
		the smallness condition of the lemma  and the inverse estimate from $H^1$ to 
		$L^\infty$ that \eqref{eq:assumed bounds} is satisfied also for $\bar n+1$. This completes the induction step for \eqref{eq:assumed bounds} 
		and proves the stated error bound.
	\end{proof}

	\section{Stability of the full discretization for BDF of orders 3 to 5}\label{Se:orders 3 to 5}

	Stability for full discretizations using the BDF methods of orders $3$ to $5$ can be shown 
	under additional conditions on the damping parameter $\alpha$ and the stepsize $\tau$.
	
	\begin{lemma}[Stability for orders $k=3,4,5$]
		\label{lemma:stability-full - BDF 3,4,5}
		Consider the linearly implicit $k$-step BDF discretization~\eqref{BDF1-h} for $3 \leqslant k \leqslant 5$ 
		with finite elements of polynomial degree $r \geqslant 2$.
		Let $\mh^n$ and $\msh^n$ satisfy \eqref{BDF1-h} and \eqref{BDF1str-full}, respectively, and suppose 
		that the regularity assumptions of Lemma~\ref{lemma:stability-full - BDF 1 and 2} hold. 
		Furthermore, assume that the damping parameter $\alpha$ satisfies
		\begin{equation}\label{alpha-eta}
		\alpha > \alpha_k := \frac{\eta_k}{1-\eta_k}
		\end{equation}
		with the multiplier $\eta_k$ of Lemma~\ref{lemma:NO},
		and that $\tau $ and $h $ satisfy the mild CFL-type condition, for some $\bar c>0$,
		\begin{equation}
		\label{eq:step size restrictions}
		\tau \leqslant \bar c h.
		\end{equation}
		Then, for sufficiently small $h\leqslant \bar h$ and $\tau\leqslant \bar\tau$, the error $\beh^n = \mh^n - \msh^n$ 
		satisfies the following bound, for $k\tau \leqslant n\tau \leqslant \bar t$,
		\begin{equation}
		\label{eq:stability bound full - k=3,4,5}
		\|\beh^n\|_{H^1(\Om)^3}^2
		\leqslant C \Big( \sum_{i=0}^{k-1} \|\beh^i\|_{H^1(\Om)^3}^2 
		+ \tau \sum_{j=k}^n \|\bdh^j\|_{L^2(\Om)^3}^2+ \tau \sum_{j=k}^n \|\bs_h^j\|_{H^1(\Om)^3}^2\Big) ,
		\end{equation}
		where the constant $C$ is independent of $\tau, h$ and $n$, but depends on $\alpha, R, K, M$,  and exponentially on~$\bar c\bar t$.
		This estimate holds under the smallness condition that the right-hand side is bounded by $\hat c h^3$ 
		with a constant $\hat c$ $($note that the right-hand side is of size
		$O((\tau^{k} + h^{r})^2)$ in the case of a sufficiently regular solution$)$.
	\end{lemma}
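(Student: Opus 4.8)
The plan is to run the proof of Lemma~\ref{lemma:stability-full - BDF 1 and 2} almost verbatim, with Dahlquist's $G$-stability (Lemma~\ref{lemma:Dahlquist}) replaced by the Nevanlinna--Odeh multiplier technique of Lemma~\ref{lemma:NO}, and to control the extra terms this substitution produces. The preparations of part~(a) of that proof carry over: by an induction on the time level, the hypothesis $\|\beh^{n}\|_{L^\infty}\leqslant\rho$ for $n<\bar n$ keeps $\bigl|\sum_{j=0}^{k-1}\gamma_j\mh^{n-j-1}(x)\bigr|$ and $\bigl|\sum_{j=0}^{k-1}\gamma_j\msh^{n-j-1}(x)\bigr|$ in $[\tfrac12,\tfrac32]$ uniformly in $x\in\Om$, so that $\wmh^{n}$ and $\wmsh^{n}$ are well defined, and the $W^{1,\infty}$-stability of the Ritz projection together with Lemma~\ref{lem:stab} gives $\|\wmsh^{n}\|_{W^{1,\infty}}+\|\dmsh^{n}\|_{W^{1,\infty}}\leqslant C_R$. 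A new point compared with the case $k\leqslant2$ is that one also needs $\|\wmh^{n}\|_{W^{1,\infty}}\leqslant C$; this follows from $\|\wmh^{n}\|_{W^{1,\infty}}\leqslant\|\wmsh^{n}\|_{W^{1,\infty}}+C\|\weh^{n}\|_{W^{1,\infty}}$ and the inverse estimate $\|\weh^{n}\|_{W^{1,\infty}}\lesssim h^{-3/2}\|\weh^{n}\|_{H^1}$, which --- using the bound $\|\beh^{j}\|_{H^1}^2\lesssim\hat c\,h^3$ obtained by induction at the previous steps --- is small. This is precisely where the stronger smallness assumption (right-hand side $\leqslant\hat c\,h^3$) and the stronger CFL condition \eqref{eq:step size restrictions} are used; they also settle the base case of the $L^\infty$ induction. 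As before, Lemma~\ref{lem:diff} then yields $\|\bqh^{n}\|_{L^2}\lesssim\|\weh^{n}\|_{L^2}$, $\|\nb\bqh^{n}\|_{L^2}\lesssim\|\weh^{n}\|_{H^1}$, $\|\brh^{n}\|_{L^2}\lesssim\|\weh^{n}\|_{L^2}+\|\bdh^{n}\|_{L^2}$, with $\weh^{n}$ bounded in terms of $(\beh^{n-j-1})_j$ as in \eqref{eq:hat e L2 estimate}--\eqref{eq:hat e H1 estimate}.

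For the energy estimate, since the $k$-step BDF method with $3\leqslant k\leqslant 5$ is not $A$-stable, I would test the error equation \eqref{eq:full error equation weak} at time level $n$ with the \emph{multiplier combination}
\[
\bphi_h=\P_h(\wmh^{n})\bigl(\deh^{n}-\eta_k\deh^{n-1}\bigr)\in\T_h(\wmh^{n}),
\]
$\eta_k\in[0,1)$ being the Nevanlinna--Odeh multiplier of Lemma~\ref{lemma:NO}. Writing $\bphi_h=(\deh^{n}-\eta_k\deh^{n-1})+\bqh^{n}-\eta_k\bph^{n}$, the correction $\bph^{n}=\P_h(\wmh^{n})\deh^{n-1}-\deh^{n-1}=(\P_h(\wmh^{n})-\P_h(\wmh^{n-1}))\dmh^{n-1}-(\P_h(\wmh^{n})-\P_h(\wmsh^{n-1}))\dmsh^{n-1}$ appears because the discrete tangent spaces at consecutive time levels differ; by Lemma~\ref{lem:diff} it is controlled by $\|\wmh^{n}-\wmh^{n-1}\|\lesssim\tau+\|\weh^{n}\|+\|\weh^{n-1}\|$ together with the $W^{1,\infty}$ bounds on $\dmsh^{n-1}$ and $\dmh^{n-1}$ (the latter via an inverse inequality and \eqref{eq:step size restrictions}). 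For the elliptic term $(\nb\beh^{n},\nb(\deh^{n}-\eta_k\deh^{n-1}))$, the multiplier form of the summation-by-parts argument in Lemma~\ref{lemma:NO} furnishes a telescoping lower bound $\tau^{-1}\bigl(\|\nb\bfE_h^{n}\|_G^2-\|\nb\bfE_h^{n-1}\|_G^2\bigr)$, up to a term involving $\bs_h^{n}$, with a symmetric positive definite $G$ for which a norm equivalence of the type \eqref{eq:G norm equivalence} holds. Using $(\wmh^{n}\times\deh^{n},\deh^{n})=0$, the $\alpha$-term and the skew term contribute
\[
\alpha\|\deh^{n}\|_{L^2}^2-\alpha\eta_k(\deh^{n},\deh^{n-1})-\eta_k(\wmh^{n}\times\deh^{n},\deh^{n-1});
\]
in contrast to the $A$-stable case $k\leqslant2$, the skew term does \emph{not} drop out against the multiplier part.

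This is where the lower bound on the damping enters. Since $|\wmh^{n}|=1$ pointwise, the two cross terms above are bounded by $\eta_k(1+\alpha)\|\deh^{n}\|_{L^2}\|\deh^{n-1}\|_{L^2}$, so that after multiplication by $\tau$, summation over $n$, and Young's inequality the positive term $\alpha\,\tau\sum_n\|\deh^{n}\|_{L^2}^2$ dominates this contribution exactly when $\eta_k(1+\alpha)<\alpha$, i.e.\ $\alpha>\eta_k/(1-\eta_k)=\alpha_k$, which is the hypothesis \eqref{alpha-eta}. Under this condition one is left with a positive multiple of $\tau\sum_n\|\deh^{n}\|_{L^2}^2$, which then absorbs all the lower-order contributions --- those coming from $\bqh^{n}$, $\bph^{n}$, $\brh^{n}$ and $\bs_h^{n}$, where the smallness of $\tau+h$ is used to swallow the tangent-space mismatch terms --- next to the telescoped elliptic energy. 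From here the argument is identical to the case $k\leqslant2$: one connects $\|\beh^{n}\|_{L^2}^2$ with $\tau\sum_{j=k}^n\|\deh^{j}\|_{L^2}^2$ through the inverse power series of $\delta(\zeta)$ (zero-stability of BDF-$k$), applies a discrete Gronwall inequality via that norm equivalence, obtains \eqref{eq:stability bound full - k=3,4,5}, and finally checks, using the inverse estimate from $H^1$ to $L^\infty$ and the smallness assumption, that $\|\beh^{\bar n}\|_{L^\infty}\leqslant\rho$, closing the induction.

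The main obstacle is the interplay of the Nevanlinna--Odeh multiplier with the non-self-adjoint, state-dependent structure of the scheme: the multiplier forces the skew term $\wmh^{n}\times\deh^{n}$ to be paired against $\deh^{n-1}$ rather than against $\deh^{n}$, which creates a genuinely indefinite contribution absorbable only when $\alpha$ exceeds $\alpha_k$; and, simultaneously, the multiplier combination lives in the tangent space $\T_h(\wmh^{n})$, which changes from one step to the next, so that the projection-mismatch terms $\bph^{n}$ must be estimated carefully --- this is where the CFL condition \eqref{eq:step size restrictions}, stronger than the one needed for $k\leqslant2$, and Lemma~\ref{lem:diff} are essential. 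Apart from these two points, the proof reruns the analysis of Section~\ref{Se:orders 1 and 2}.
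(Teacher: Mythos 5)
Your overall plan — run the proof of Lemma~\ref{lemma:stability-full - BDF 1 and 2} again with the Nevanlinna--Odeh multiplier and a stronger smallness/CFL hypothesis, and trace the dependence on $\alpha$ through the $\alpha$-term and the skew term — is the right strategy, and your derivation of the threshold $\alpha>\eta_k/(1-\eta_k)$ from the cross terms $-\alpha\eta_k(\deh^n,\deh^{n-1})-\eta_k(\wmh^n\times\deh^n,\deh^{n-1})$ is essentially the paper's. However, there is a genuine gap in how you bring the multiplier into the scheme.

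You propose to test the \emph{single} error equation at time level $n$ with $\P_h(\wmh^n)(\deh^n-\eta_k\deh^{n-1})$, and you claim that the resulting elliptic term $(\nb\beh^n,\nb(\deh^n-\eta_k\deh^{n-1}))$ telescopes by the Nevanlinna--Odeh argument. It does not. Writing $\deh^m=\tfrac1\tau\sum_j\delta_j\beh^{m-j}+\bs_h^m$, the quadratic form (up to $\bs_h$ terms) is
\[
\Bigl(\nb\beh^n,\ \tfrac1\tau\bigl[(1-\eta_k\zeta)\delta(\zeta)\bigr]\nb\beh\Bigr),
\]
i.e.\ the polynomial $1$ paired with $(1-\eta_k\zeta)\delta(\zeta)$. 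The Dahlquist/Nevanlinna--Odeh telescoping estimate \eqref{multiplier} instead requires the pairing $(1-\eta_k\zeta)\beh$ with $\delta(\zeta)\beh$, i.e.\ the quadratic form $(\nb\beh^n-\eta_k\nb\beh^{n-1},\nb\deh^n)$. These two forms are not the same and are not equivalent up to boundary terms: for $k=3$, for instance, the coefficient of $(\nb\beh^{n-1},\nb\beh^{n-1})$ is $3\eta_3$ in the correct form and $0$ in yours, and yours produces a $(\nb\beh^n,\nb\beh^{n-4})$ coupling that the correct form does not have. Worse, the candidate telescoping condition for your form would be $\Real\bigl[(1-\eta_k\zeta)\delta(\zeta)\bigr]>0$ for $|\zeta|<1$, which is \emph{not} equivalent to $\Real\frac{\delta(\zeta)}{1-\eta_k\zeta}>0$ (they differ by a factor $(1-\eta_k\zeta)^2$, which is not positive real). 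A local expansion near $\zeta=1$ (with $w=1-\zeta$, $|w|$ small) gives
\[
\Real\Bigl[\tfrac{\delta(\zeta)}{1-\eta_k\zeta}\Bigr]\,|1-\eta_k\zeta|^2\approx +\eta_k|w|^2>0,
\qquad
\Real\bigl[(1-\eta_k\zeta)\delta(\zeta)\bigr]\approx -\eta_k|w|^2<0,
\]
so your form fails the positivity condition; by the converse direction of G-stability, no positive definite $G$ exists that makes it telescope.

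The paper avoids this by invoking \emph{both} the $n$th and $(n-1)$st error equations. It tests the $n$th equation with $\bphih=\P_h(\wmh^n)\deh^n=\deh^n+\bqh^n\in\T_h(\wmh^n)$ and the $(n-1)$st with $\bpsih=\P_h(\wmh^{n-1})\deh^n=\deh^n+\bqh^n+\bph^n\in\T_h(\wmh^{n-1})$, where $\bph^n=-(\P_h(\wmh^n)-\P_h(\wmh^{n-1}))\deh^n$, and then forms (eq.~$n$)$-\eta_k\,$(eq.~$n-1$). The elliptic contribution is then $(\nb\beh^n-\eta_k\nb\beh^{n-1},\nb\deh^n)$ plus corrections in $\bqh^n$, $\bph^n$, $\bs_h^n$ --- exactly the Nevanlinna--Odeh form --- and the summation-by-parts inequality \eqref{multiplier} applies. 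Note that this also gives a \emph{different} and simpler $\bph^n$ than yours (it involves $\deh^n$, not $\dmh^{n-1}$ and $\dmsh^{n-1}$), which is easier to control. To fix your argument you should follow the two-equation combination (which is the same one sketched for the linear model problem in the Appendix leading to \eqref{edot-test}); the rest of your preparations, the $L^\infty$ induction with $\rho=c_0h$, the CFL usage, and the $\alpha>\alpha_k$ calculation can then be carried over.
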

	
	Together with the defect bounds of Section~\ref{Se:full-discr}, this stability lemma proves Theorem~\ref{theorem:err-bdf-full - BDF 3+}.
	We remark that the thresholds $\alpha_k>0$ defined here are the same as those appearing in Theorem~\ref{theorem:err-bdf-full - BDF 3+}.
	
	\begin{proof}
		The proof of this lemma combines the arguments of the proof of Lemma~\ref{lemma:stability-full - BDF 1 and 2} 
		with a nonstandard variant of the multiplier technique of Nevanlinna and Odeh, as outlined in the Appendix. 
		Since the size of the parameter $\alpha$ determines which BDF methods satisfy the stability estimate, the dependence on $\alpha$ 
		will be carefully traced all along the proof.
		%

		
		(a) \emph{Preparations.}
		As in the previous proof, we make again the induction hypothesis \eqref{eq:assumed bounds} for some $\bar n$ with $\bar n\tau \leqslant \bar t$, 
		but this time with $\rho=c_0 h$ for some positive constant $c_0$:
		\begin{equation}
		\label{eq:assumed bounds - h}
		\|\beh^n\|_{L^\infty} 
		\leqslant c_0 h,  \qquad n < \bar n .
		\end{equation}
		By an inverse inequality, this implies  that $\|\beh^n\|_{W^{1,\infty}}$ has an $h$- and $\tau$-independent bound, and hence also 
		$\|\mh^n\|_{W^{1,\infty}}$ for $n<\bar n$. Together with \eqref{eq:extrapolations L infty control}, this implies 
		\begin{equation}\label{mhn}
		\|\wmh^n\|_{W^{1,\infty}} \leqslant C
		\end{equation}
		and further
		\begin{equation}\label{wehn2}
		\|\weh^n\|_{L^\infty} \leqslant Ch.
		\end{equation}

		As in the Appendix, we aim to subtract $\eta_k$ times the error equation for time step $n-1$ 
		from the error equation for time 
		step  $n$, and then to test with $\bphih = \P_h(\wmh^n) \de^n_h \in \T_h(\wmh^n)$ 
		(similarly as in the proof of Lemma~\ref{lemma:stability-full - BDF 1 and 2}). 
		%
		%
		However, this is not possible directly due to the different test spaces at different time steps: 
		%
		\begin{subequations}
			\begin{align}
			\label{eq:error eqn - n}
			{}& \begin{aligned}
			{}& \alpha (\deh^n,\bphih) + (\weh^n \times \dmsh^n,\bphih) 
			\\ {}& 
			+ (\wmh^n \times \deh^n,\bphih) + (\nb \beh^n ,\nb \bphih) = - (\brh^n,\bphih) ,
			\end{aligned} 
			\intertext{for all $\bphih \in \T_h(\wmh^n)$, and }
			\label{eq:error eqn - (n-1)}
			{}& \begin{aligned}
			{}& \alpha (\deh^{n-1},\bpsih) + (\weh^{n-1} \times \dmsh^{n-1},\bpsih) \\
			{}& + (\wmh^{n-1} \times \deh^{n-1},\bpsih) + (\nb \beh^{n-1} ,\nb \bpsih)
			=- (\brh^{n-1},\bpsih) ,
			\end{aligned} 
			\end{align}
		\end{subequations}
		for all $\bpsih \in \T_h(\wmh^{n-1})$. 
		
		As in \eqref{eq:de_h^n perturbed - fully discrete}, we have
		\begin{equation}
		\label{eq:test function dot e_h fully discrete}
		\bphih = \P_h(\wmh^n) \deh^n = \deh^n + \bqh^n , \quad \textnormal{ with } \quad \bqh^n = - (\P_h(\wmh^n) - \P_h(\wmsh^n)) \dmsh^n ,
		\end{equation}
		where $\bqh^n$ is bounded by \eqref{eq:qhn bound}.
		
		In turn, the test function $\bpsih = \P_h(\wmh^{n-1})\deh^n \in \T_h(\wmh^{n-1})$ is a perturbation of $\bphih = \deh^n + \bqh^n$, 
		since using \eqref{eq:test function dot e_h fully discrete} we obtain
		\begin{align*}
		\bpsih {}& = \P_h(\wmh^{n-1})\deh^n \\
		{}& = \P_h(\wmh^{n})\deh^n - (\P_h(\wmh^{n}) -\P_h(\wmh^{n-1})) \deh^n \\
		{}& = \deh^n + \bqh^n + \bph^n \quad\text{ with }\quad \bph^n=- ( \P_h(\wmh^n) - \P_h(\wmh^{n-1}) ) \deh^n.
		\end{align*}
		The perturbation $\bph^n$ is estimated using the second bound in Lemma~\ref{lem:diff} ($i$) with $p=\infty$, $q=2$, and noting \eqref{mhn}. 
		We obtain
		\begin{equation*}
		\begin{aligned}
		\|\bph^n\|_{L^2}  
		\leqslant {}& \|( \P_h(\wmh^n) - \P_h(\wmh^{n-1}) ) \deh^n\|_{L^2} \\
		\leqslant {}& c \|\deh^n\|_{L^2} \|\wmh^n -\wmh^{n-1}\|_{L^\infty} \\
		\leqslant {}& c \|\deh^n\|_{L^2} \Big( \|\weh^n\|_{L^\infty} + \|\wmsh^n - \wmsh^{n-1}\|_{L^\infty}  + \|\weh^{n-1}\|_{L^\infty} \Big) \\
		\leqslant {}& c \|\deh^n\|_{L^2} \Big( \|\weh^n\|_{L^\infty} + 
		\sum_{j=0}^{k-1} |\gamma_j| \int_{t_{n-j-2}}^{t_{n-j-1}} \| \RRRh \partial_t \m(t) \|_{L^\infty} \,\d t
		+ \|\weh^{n-1}\|_{L^\infty} \Big) .
		\end{aligned}
		\end{equation*}
		We have $\|\mathbf{R}_h \partial_t \m(t) \|_{L^\infty}\leqslant c \| \partial_t \m(t) \|_{W^{1,\infty}} $ by \cite[Theorem~8.1.11]{BrennerScott}. 
		In view of \eqref{wehn2} we obtain, for $\tau\leqslant \bar C h$,
		\begin{equation}
		\label{eq:bound for p^n - alternative}
		\|\bph^n\|_{L^2}  \leqslant Ch \|\deh^n\|_{L^2},
		\end{equation}
		and by an inverse estimate,
		\begin{equation}
		\label{eq:bound for p^n - gradient}
		\|\nabla \bph^n\|_{L^2}  \leqslant C \|\deh^n\|_{L^2}.
		\end{equation}
		We also recall the bound \eqref{eq:rhn bound} for $\|\brh^n\|_{L^2}$.
		
		\medskip
		
		(b) \emph{Energy estimates.} By subtracting \eqref{eq:error eqn - n}$-\eta_k$\eqref{eq:error eqn - (n-1)} with 
		the above choice of test functions, we obtain
		\begin{equation}
		\label{eq:error eqn fully discrete}
		\begin{aligned}
		& \alpha (\deh^n - \eta_k \deh^{n-1},\deh^n + \bqh^n) 
		+ (\weh^n \times \dmsh^n - \eta_k \weh^{n-1} \times \dmsh^{n-1},\deh^n + \bqh^n) \\
		& + (\wmh^n \times \deh^n - \eta_k \wmh^{n-1} \! \times \! \deh^{n-1},\deh^n + \bqh^n) 
		+  (\nb \beh^n  - \eta_k \nb \beh^{n-1} ,\nb (\deh^n + \bqh^n)) \\
		& - \eta_k \big[ \alpha (\deh^{n-1},\bph^n) 
		+ (\weh^{n-1} \times \dmsh^{n-1},\bph^n) 
		\\ & \phantom{ \ - \eta_k \big[ }
		+ (\wmh^{n-1} \times \deh^{n-1},\bph^n) 
		+  (\nb \beh^{n-1} ,\nb \bph^n) \big] \\
		= & - (\brh^n - \eta_k\brh^{n-1} ,\deh^n + \bqh^n) - \eta_k (\brh^{n-1},\bph^n).
		\end{aligned}
		\end{equation}
		%
		We estimate the terms of the error equation \eqref{eq:error eqn fully discrete} separately and track carefully the 
		dependence on $\eta_k$ and $\alpha$.
		
		The term $\alpha (\deh^n - \eta_k \deh^{n-1},\deh^n)$ is bounded from below, using Young's inequality and absorptions, by
		\begin{equation*}
		\alpha (\deh^n - \eta_k \deh^{n-1},\deh^n) \geqslant \alpha \big( 1-\tfrac12\eta_k \big) \|\deh^n\|_{L^2}^2 - \tfrac{\alpha}{2}\eta_k \|\deh^{n-1}\|_{L^2}^2,
		\end{equation*} 
		while the term $ (\nb \beh^n  - \eta_k \nb \beh^{n-1} ,\nb \deh^n)$ is bounded from below, via the relation \eqref{multiplier} and \eqref{eq:s definition}, by
		\begin{equation*}
		(\nb \beh^n  - \eta_k \nb \beh^{n-1} ,\nb \deh^n) \geqslant  \frac{1}{\tau} \Big(\|\nb \bfEh^n\|_G^2 - \|\nb \bfEh^{n-1}\|_G^2 \Big) 
		+ (\nb \beh^n  - \eta_k \nb \beh^{n-1} ,\nb \bs_h^n),
		\end{equation*}
		with $\bfEh^n = (\beh^{n-k+1},\dotsc,\beh^n)$, and where the $G$-weighted semi-norm is 
		generated by the matrix $G=(g_{ij})$ from Lemma~\ref{lemma:Dahlquist} for the rational function $\delta(\zeta)/(1-\eta_k\zeta)$.
		
		The remaining terms outside the rectangular bracket are estimated using the Cauchy--Schwarz and Young inequalities 
		(the latter often with a sufficiently small but fixed $h$- and $\tau$-independent weighting factor $\mu>0$) and $\|\wmh^n\|_{L^\infty}=1$ 
		and orthogonality. We obtain, with varying constants $c$ (which depend on $\alpha$ and are inversely proportional to $\mu$)
		\begin{align*}
		{}& \alpha (\deh^n - \eta_k \deh^{n-1},\bqh^n) + (\weh^n \times \dmsh^n - \eta_k \weh^{n-1} \times \dmsh^{n-1},\deh^n + \bqh^n) \\
		{}& \quad + (\wmh^n \times \deh^n - \eta_k \wmh^{n-1} \times \deh^{n-1},\deh^n + \bqh^n) + 
		( \nb \be^n - \eta_k \nb \beh^{n-1} ,\nb \bqh^n) \\
		{}& \leqslant \bigl( \alpha \mu + \mu + \tfrac12 \eta_k \bigr) \|\deh^n\|_{L^2}^2 
		+ \bigl( \alpha\mu \eta_k +  \tfrac12 \eta_k \bigr) \|\deh^{n-1}\|_{L^2}^2
		\\
		{}& \quad + c \bigl(  \| \bqh^n\|_{L^2} + \|\weh^n\|_{L^2}^2 + \|\weh^{n-1}\|_{L^2}^2 \bigr)
		+\tfrac12 \bigl (\|\nb \beh^n\|_{L^2}^2 +\eta_k^2 \|\nb \beh^{n-1}\|_{L^2}^2 +  \| \nb\bqh^n\|_{L^2} \bigr)
		\\
		{}& \leqslant \bigl( \alpha \mu + \mu + \tfrac12 \eta_k \bigr) \|\deh^n\|_{L^2}^2 
		+ \bigl( \alpha\mu \eta_k +  \tfrac12 \eta_k \bigr) \|\deh^{n-1}\|_{L^2}^2
		+ c \sum_{j=0}^k \| \beh^{n-j-1} \|_{H^1}^2 ,
		%
		\end{align*}
		where in the last inequality we used \eqref{eq:hat e L2 estimate} and \eqref{eq:hat e H1 estimate} to estimate $\weh^n$.
		
		The terms inside the rectangular bracket are bounded similarly, using \eqref{eq:bound for p^n - alternative} and \eqref{eq:bound for p^n - gradient}
		and the condition $\tau\leqslant \bar C h$, by 
		\begin{align*}
		{}&  \alpha (\deh^{n-1},\bph^n) + (\weh^{n-1} \times \dmsh^{n-1},\bph^n) 
		+ (\wmh^{n-1} \times \deh^{n-1},\bph^n) 
		+  (\nb \beh^{n-1} ,\nb \bph^n) 
		\\
		{}& \leqslant
		\mu \| \deh^n \|_{L^2}^2 + ch \| \deh^{n-1} \|_{L^2}^2
		+c \bigl( \|\weh^{n-1}\|_{L^2}^2 +  \|\nb \beh^{n-1}\|_{L^2}^2 \bigr)
		\\
		{}& \leqslant
		\mu  \| \deh^n \|_{L^2}^2 + c \sum_{j=0}^k \| \beh^{n-j-1} \|_{H^1}^2.
		\end{align*}
		%
		Here $\mu$ is an arbitrarily small positive constant (independent of $\tau$ and $h$), and $c$ depends on the choice of $\mu$.
		
		In view of \eqref{eq:rhn bound}, the terms with the defects $\brh^n$ are bounded by
		\begin{align*}
		{}& 
		- (\brh^n - \eta_k\brh^{n-1} ,\deh^n + \bqh^n) - \eta_k (\brh^{n-1},\bph^n) 
		\\
		{}& \leqslant
		\mu  \| \deh^n \|_{L^2}^2 + c \bigl( \| \brh^n \|_{L^2}^2 +  \| \brh^{n-1} \|_{L^2}^2 +  \| \bqh^n \|_{L^2}^2 \bigr)
		\\
		{}& \leqslant
		\mu \| \deh^n \|_{L^2}^2 + c \sum_{j=0}^k \| \beh^{n-j-1} \|_{L^2}^2 + c \sum_{j=0}^1 \| \bdh^{n-j} \|_{L^2}^2.
		\end{align*}
		Combination of these inequalities yields
		\begin{align*}
		&\Bigl( \alpha(1-\tfrac12\eta_k) -\tfrac12\eta_k -\mu \Bigr)  \| \deh^n \|_{L^2}^2 -
		\Bigl( \tfrac\alpha 2 \eta_k + \tfrac12\eta_k +\mu\alpha\eta_k \Bigr) \|\deh^{n-1}\|_{L^2}^2
		\\
		&\quad +  \frac{1}{\tau} \Big(\|\nb \bfEh^n\|_G^2 - \|\nb \bfEh^{n-1}\|_G^2 \Big) 
		\\
		&\leqslant c \sum_{j=0}^k \| \beh^{n-j-1} \|_{H^1}^2 + c \sum_{j=0}^1 \| \bdh^{n-j} \|_{L^2}^2 + c \| \nabla \bs_h^n \|_{L^2}^2.
		\end{align*}
		Under condition \eqref{alpha-eta} we have
		\[
		\omega :=\alpha ( 1-\eta_k) - \eta_k > 0.
		\]
		Multiplying both sides by $\tau$ and summing up from $k$ to $n$ with $n\leqslant \bar n$ yields, for sufficiently small $\mu$,
		\begin{align*}
		& \tfrac12\omega \tau \sum_{j=k}^n \|\deh^j\|_{L^2}^2 +   \|\nb \bfEh^n\|_G^2 \\
		&\leqslant  c\tau \|\deh^{k-1}\|_{L^2}^2 +\|\nb\bfEh^{k-1}\|_G^2 + c \tau\sum_{j=0}^{n-1} \| \beh^{j} \|_{H^1}^2 + c \tau\sum_{j=k}^n \| \bdh^{j} \|_{L^2}^2 
		+ c \tau\sum_{j=k}^n\| \nabla \bs_h^j \|_{L^2}^2.
		\end{align*}
		The proof is then completed using exactly the same arguments as in
		the last part of the proof of Lemma~\ref{lemma:stability-full - BDF 1 and 2}, by establishing an estimate between $\|\beh^n\|_{L^2}^2$ 
		and $\tau \sum_{j=k}^n \|\deh^j\|_{L^2}^2$ and using a discrete Gronwall inequality, and completing the induction step for
		\eqref{eq:assumed bounds - h}.
	\end{proof}


	\section{Numerical experiments}\label{Se:numer-exp}
	To obtain significant numerical results, we prescribe the exact solution $\m$ on given  three-dimensional  domains $\Om:=[0,1]\times [0,1]\times [0,L]$ with $L\in\{1/100,1/4\}$.
	The discretizations of these domains will consist of a few layers of elements in $z$-direction (one layer for $L=1/100$ and ten layers for $L=1/4$) and a later specified number 
	of elements in $x$ and $y$ directions.  This mimics the common case of thin film alloys as for example in the standard problems of the Micromagnetic Modeling Activity 
	Group at NIST Center for Theoretical and Computational Materials Science (\texttt{ctcms.nist.gov}).  Moreover, this mesh structure helps to keep the computational 
	requirements reasonable and allow us to compute the experiments on a desktop PC.
	We are aware that these experiments are only of preliminary nature and are just supposed to confirm the theoretical results. A more thorough investigation 
	of the numerical properties of the developed method is needed. This will require us to incorporate preconditioning, parallelization of the computations, 
	as well as lower order energy contributions in the effective field~\eqref{Heff} to be able to compare to benchmark results from computational physics. 
	This, however, is beyond the scope of this paper,  and will be the topic of a subsequent work.  
	
	\vspace*{0.2cm}
	
	We consider the time interval $[0,\bar t\,]$ with $\bar t=0.2$ and define two different exact solutions. Since within our computational budget 
	either the time discretization error or the space discretization error dominates, we construct the solutions such that the first one is harder to 
	approximate in space, while the second one is harder to approximate in time. Both solutions are constant in $z$-direction as is often observed in thin-film applications. 
	
	\subsection{Implementation}
	The numerical experiments were conducted using  the finite element package FEniCS (\texttt{www.fenicsproject.org}) on a desktop computer. 
	As already discussed in Section~\ref{SSe:LLG-discr-full}, there are several ways to implement the tangent space restriction. We decided to solve 
	a saddle point problem (variant~(a) in Section~\ref{SSe:LLG-discr-full}) for simplicity of implementation. For preconditioning, we used the black-box AMG preconditioner that comes 
	with FEniCS. Although this might not be the optimal solution, it keeps the number of necessary iterative solver steps within reasonable bounds. 
	Assuming perfect preconditioning, the cost per time-step is then proportional to the number of mesh-elements. We  observed this behavior approximately, 
	although further research beyond the scope of this work is required to give a definite conclusion.
	\subsection{Exact solutions}
	We choose the damping parameter $\alpha=0.2$ and define $g(t):=(\bar t+0.1)/(\bar t+0.1-t)$ as well as $d(x):=(x_1-1/2)^2 + (x_2-1/2)^2$, 
	which is the squared distance of the projection 
	of $x$ to $[0,1]\times[0,1]$ and the point $(1/2,1/2)$. For some constant $C=400$  (a choice made to have pronounced effects),  define
	\begin{equation}\label{eq:exactsol1}
	\m(x,t):=\begin{pmatrix}
	C \e^{-\frac{g(t)}{1/4-d(x)}}(x_1-1/2)\\
	C \e^{-\frac{g(t)}{1/4-d(x)}}(x_2-1/2)\\
	\sqrt{ 1- C^2 \e^{-2\frac{g(t)}{1/4-d(x)}}d(x)}
	\end{pmatrix} \text{if } d(x)\leqslant \frac 14 \text{ and } \m(x,t):=\begin{pmatrix}0\\0\\1\end{pmatrix}\text{else.}
	\end{equation}
	It is easy to check that $|\m(x,t)|=1$ for all $(x,t)\in \varOmega \times [0,\bar t\,]$. Moreover, $\partial_n \m(x,t)=0$ for all $x\in \partial\varOmega$.
	We may calculate the time derivative of $\m$ in a straightforward fashion, i.e., $\partial_t\m(x,t) =0 $ for $d(x)> 1/4$ and
	\begin{equation*}
	\partial_t\m(x,t)= \begin{pmatrix}
	\frac{-g^\prime(t)}{1/4-d(x)} C \e^{-\frac{g(t)}{1/4-d(x)}}(x_1-1/2)\\
	\frac{-g^\prime(t)}{1/4-d(x)} C \e^{-\frac{g(t)}{1/4-d(x)}}(x_2-1/2)\\
	\frac{g^\prime(t)}{1/4-d(x)} C^2 \e^{-2\frac{g(t)}{1/4-d(x)}}\frac{d(x)}{\m_3(x,t)}
	\end{pmatrix}\quad\text{if }d(x)\leqslant \frac 14.
	\end{equation*}
	Here, $\m_3$ denotes the third component of $\m$ as defined above.
	
	\vspace*{0.2cm}
	
	The second exact solution is defined via
	\begin{equation}\label{eq:exactsol2}
	\widetilde \m(x,t):=\begin{pmatrix}
	-(x_1^3-3x_1^2/2 +1/4)\sin(3\pi t/\bar t)\\
	\sqrt{1-(x_1^3-3x_1^2/2 +1/4)^2}\\
	-(x_1^3-3x_1^2/2 +1/4)\cos(3\pi t/\bar t)
	\end{pmatrix}.
	\end{equation}
	Due to the polynomial nature in the first and the third component, and the well-behaved square-root, the space approximation error does not dominate the time approximation.
	
	\subsection{The experiments}
	
	We now may compute the corresponding forcings 
	$\bm{H}$ resp. $\widetilde{\bm{H}}$ to obtain the prescribed solutions by inserting into~\eqref{llg-projection}, i.e.,
	\begin{equation*}
	\bm{H}=\alpha \partial_t \m + \m\times \partial_t\m -  \varDelta \m.
	\end{equation*}  
	(Note that we may disregard the projection $\mathbf{P}(\m)$ from~\eqref{llg-projection} since we solve in the tangent space anyway.)
	We compute $\bm{H}$ numerically by first interpolating $\m$ and $\partial_t\m$ and then computing the derivatives. 
	This introduces an additional error which is not accounted for in the theoretical analysis. However, the examples below confirm the expected 
	convergence rates and hence conclude that this additional perturbation is negligible.
	Figure~\ref{fig:exact} shows slices of the exact solution at different time steps. Figure~\ref{fig:convtau} shows the convergence with respect 
	to the time step size $\tau$, while Figure~\ref{fig:convh} shows convergence with respect to the spatial mesh size $h$. 
	All the experiments confirm the expected rates for smooth solutions.

	\begin{figure}
		\includegraphics[width=0.33\textwidth]{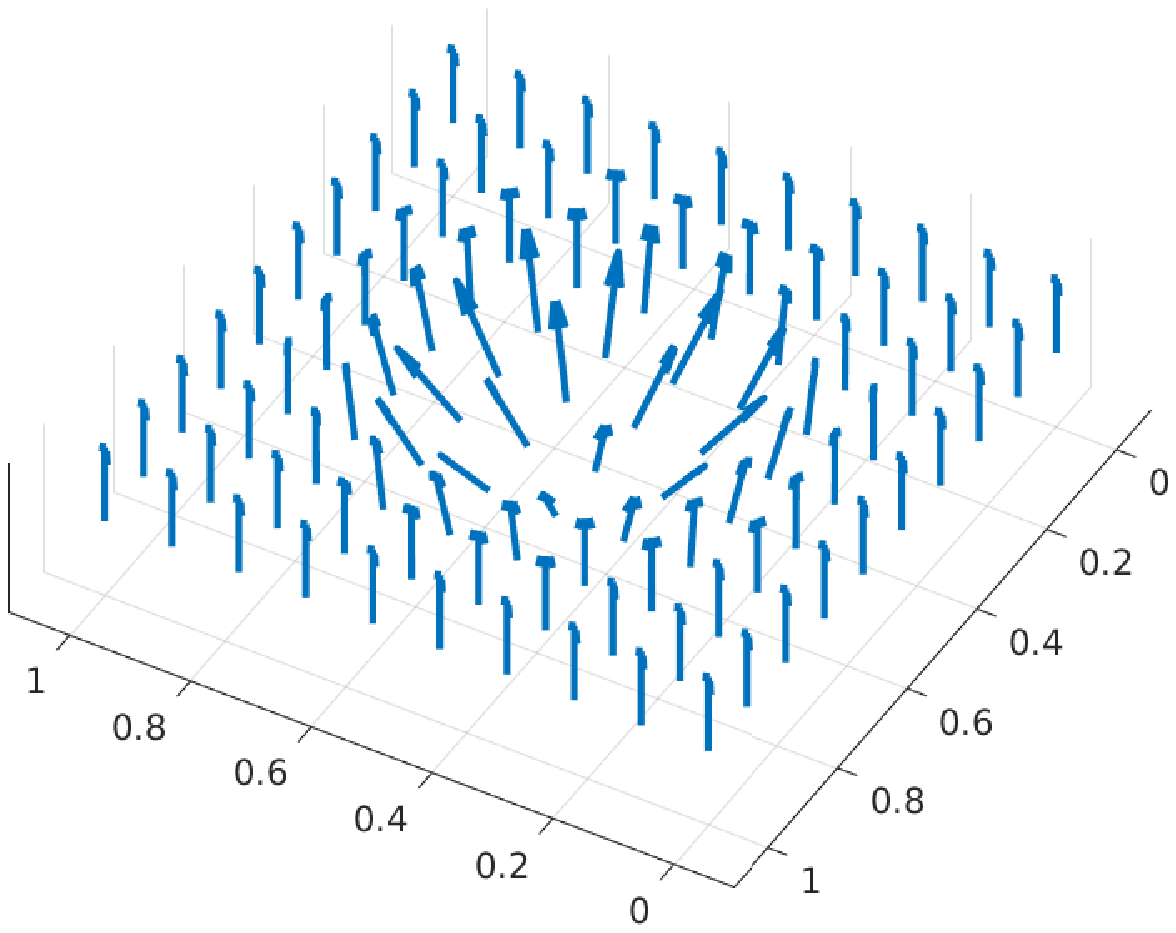}\includegraphics[width=0.33\textwidth]{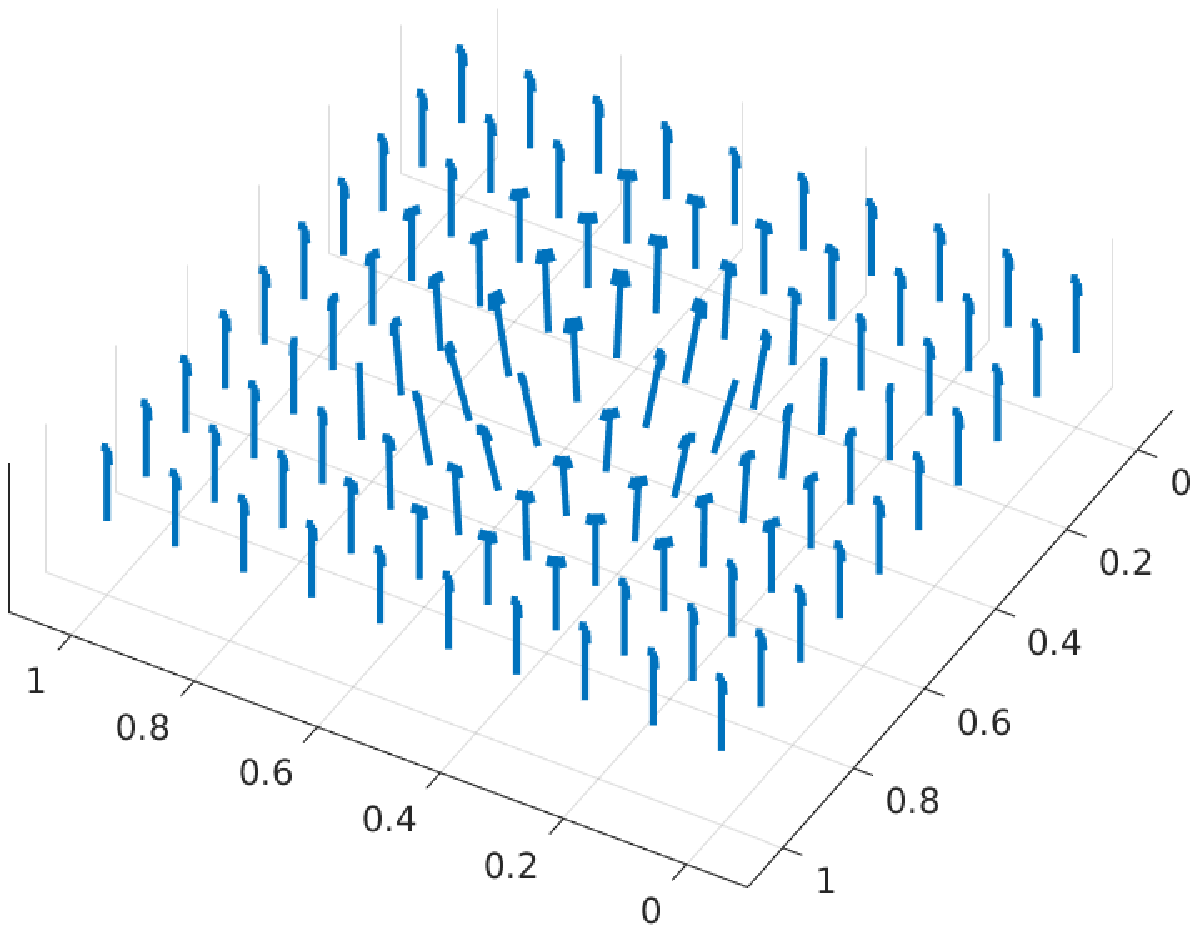}%
		\includegraphics[width=0.33\textwidth]{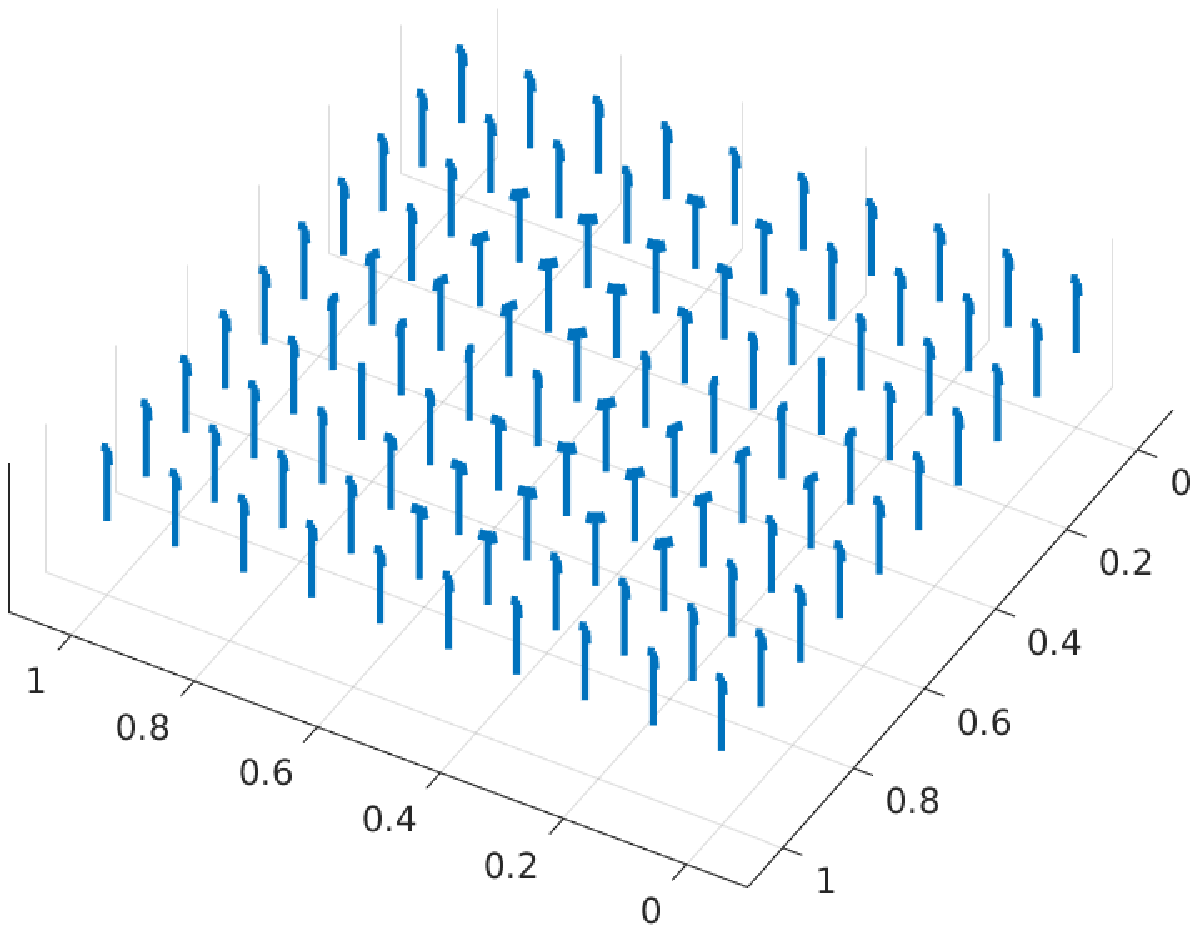}\\
		\includegraphics[width=0.33\textwidth]{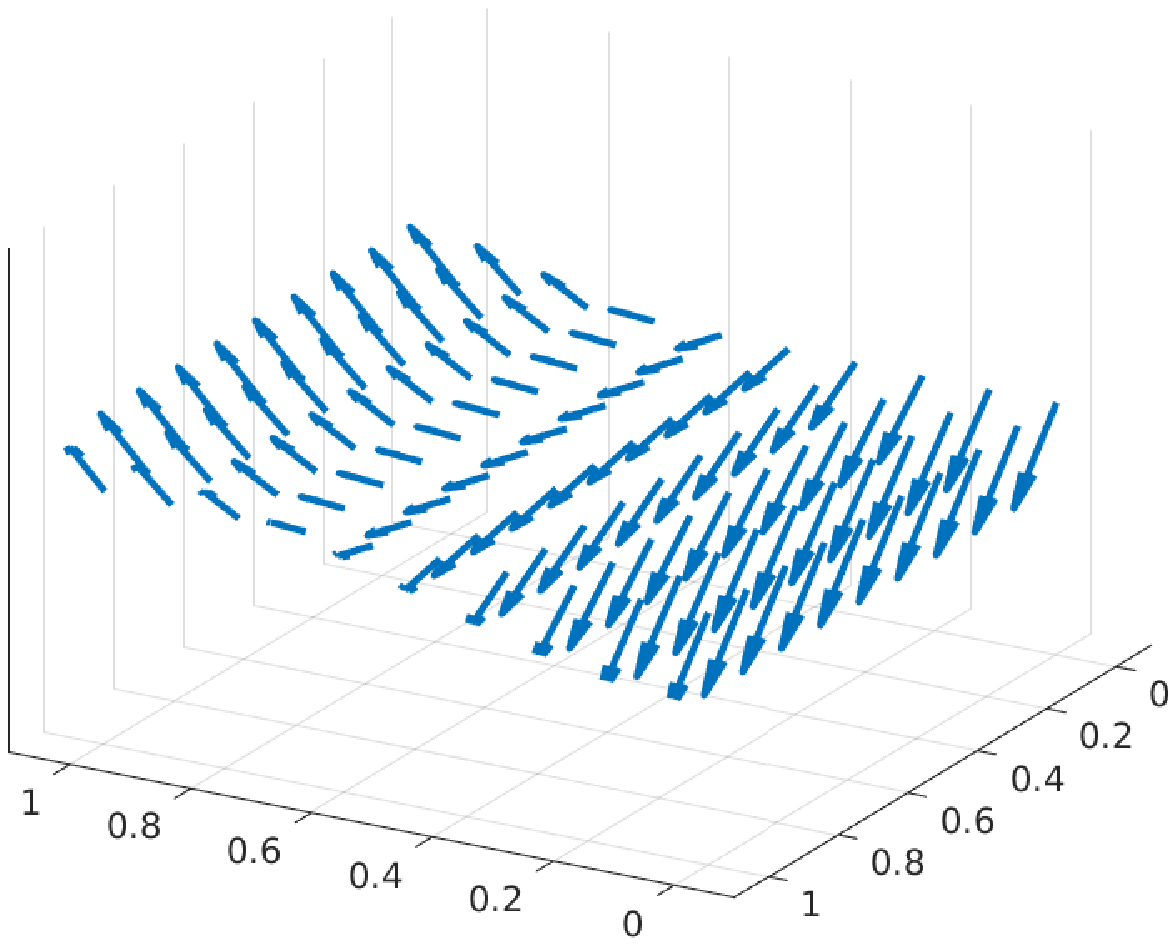}\includegraphics[width=0.33\textwidth]{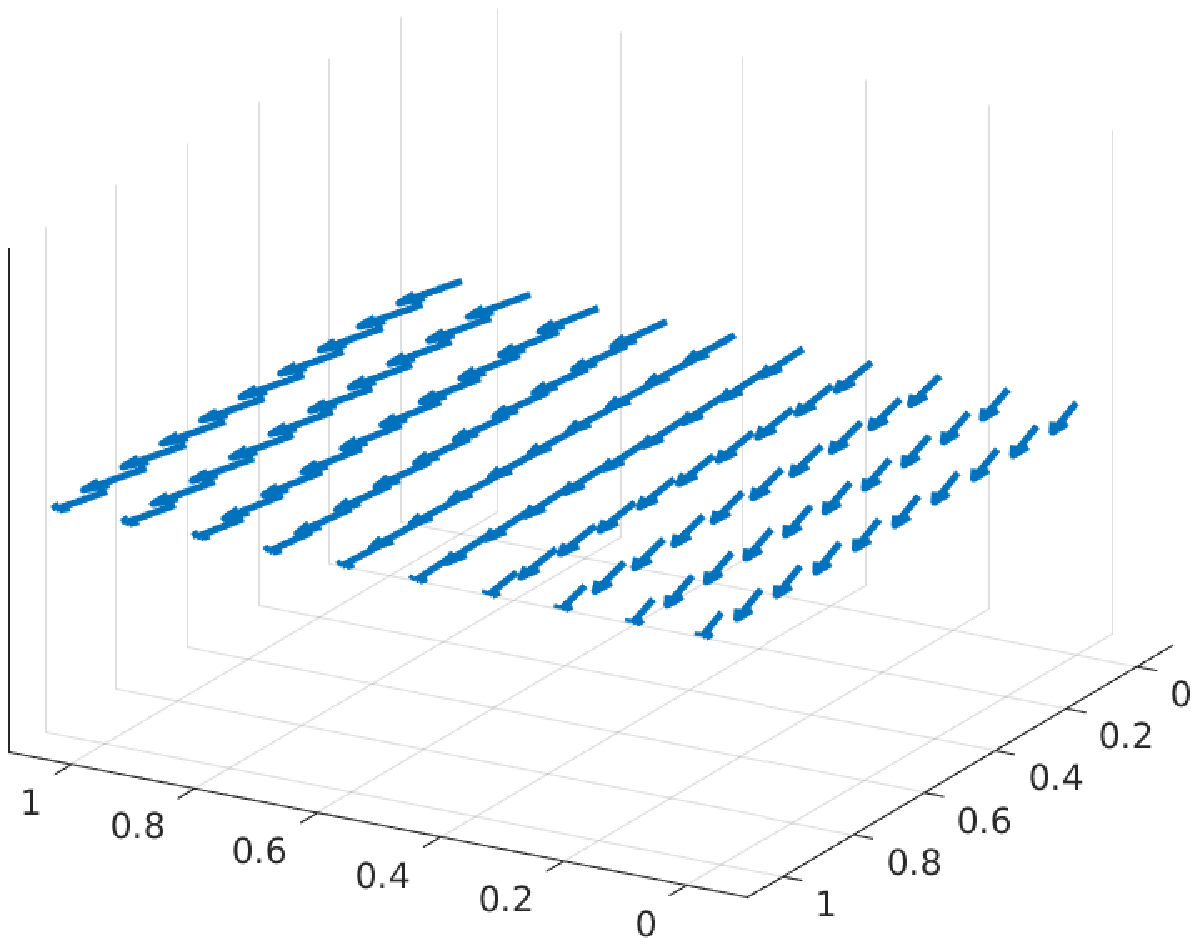}%
		\includegraphics[width=0.33\textwidth]{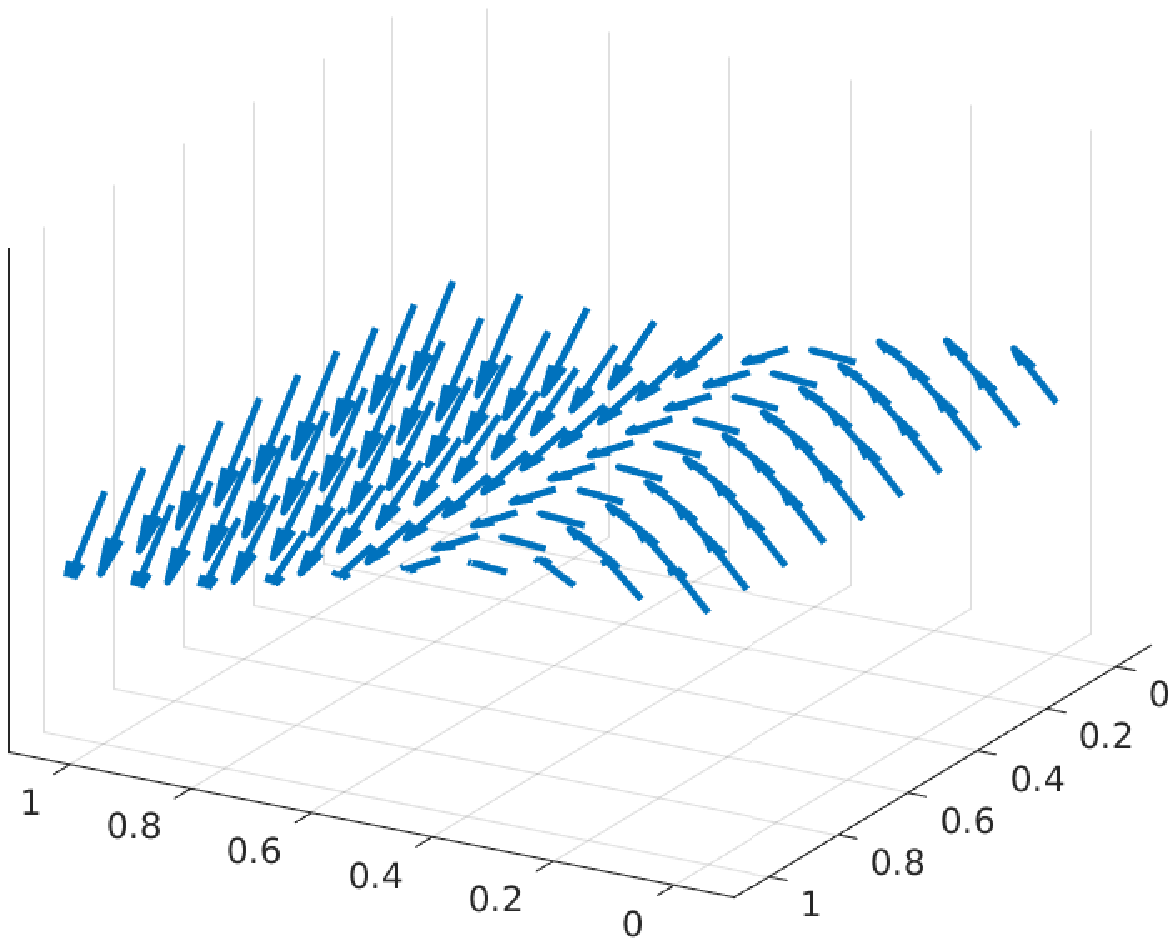}
		\caption{The first row shows the exact solution $\m(x,t)$ from~\eqref{eq:exactsol1} for $x\in [0,1]\times[0,1]\times\{0\}$ and $t\in\{0,0.05,\bar t\}$ 
			(from left to right), whereas the second row shows the exact solution $\widetilde\m(x,t)$ from~\eqref{eq:exactsol2} for $x\in [0,1]\times[0,1]\times\{0\}$ 
			and $t\in\{0,0.2/6,0.2/3\}$ (from left to right). While the problems are three-dimensional, the solutions are constant in $z$-direction and we only show one slice of the solution.}
		\label{fig:exact}
	\end{figure}  
	
	\begin{figure} 
		{\psfrag{v8}[][][.65]{\raisebox{0.2cm}{$\,\,10^{-8}$}}
			\psfrag{v6}[][][.65]{\raisebox{0.2cm}{$\,\,10^{-6}$}}
			\psfrag{v4}[][][.65]{\raisebox{0.2cm}{$\,\,10^{-4}$}}
			\psfrag{v2}[][][.65]{\raisebox{0.2cm}{$\,\,10^{-2}$}}
			\psfrag{v0}[][][.65]{\raisebox{0.2cm}{$\,\,\,10^0$}}
			\psfrag{h3}[][][.65]{\raisebox{0.2cm}{$\,\,\,\,\,\,\,\,10^{-3}$}}
			\psfrag{h2}[][][.65]{\raisebox{0.2cm}{$\,\,\,\,\,\,\,\,10^{-2}$}}
			\psfrag{h1}[][][.65]{\raisebox{0.2cm}{$\,\,\,\,\,\,\,\,10^{-1}$}}
			\psfrag{first}{\tiny{$\,k=1$}}
			\psfrag{second}{\tiny{$\,k=2$}}
			\psfrag{third}{\tiny{$\,k=3$}}
			\psfrag{fourth}{\tiny{$\,k=4$}}
			\psfrag{tau}{\tiny{timestep $\tau$}}
			\includegraphics[width=0.45\textwidth]{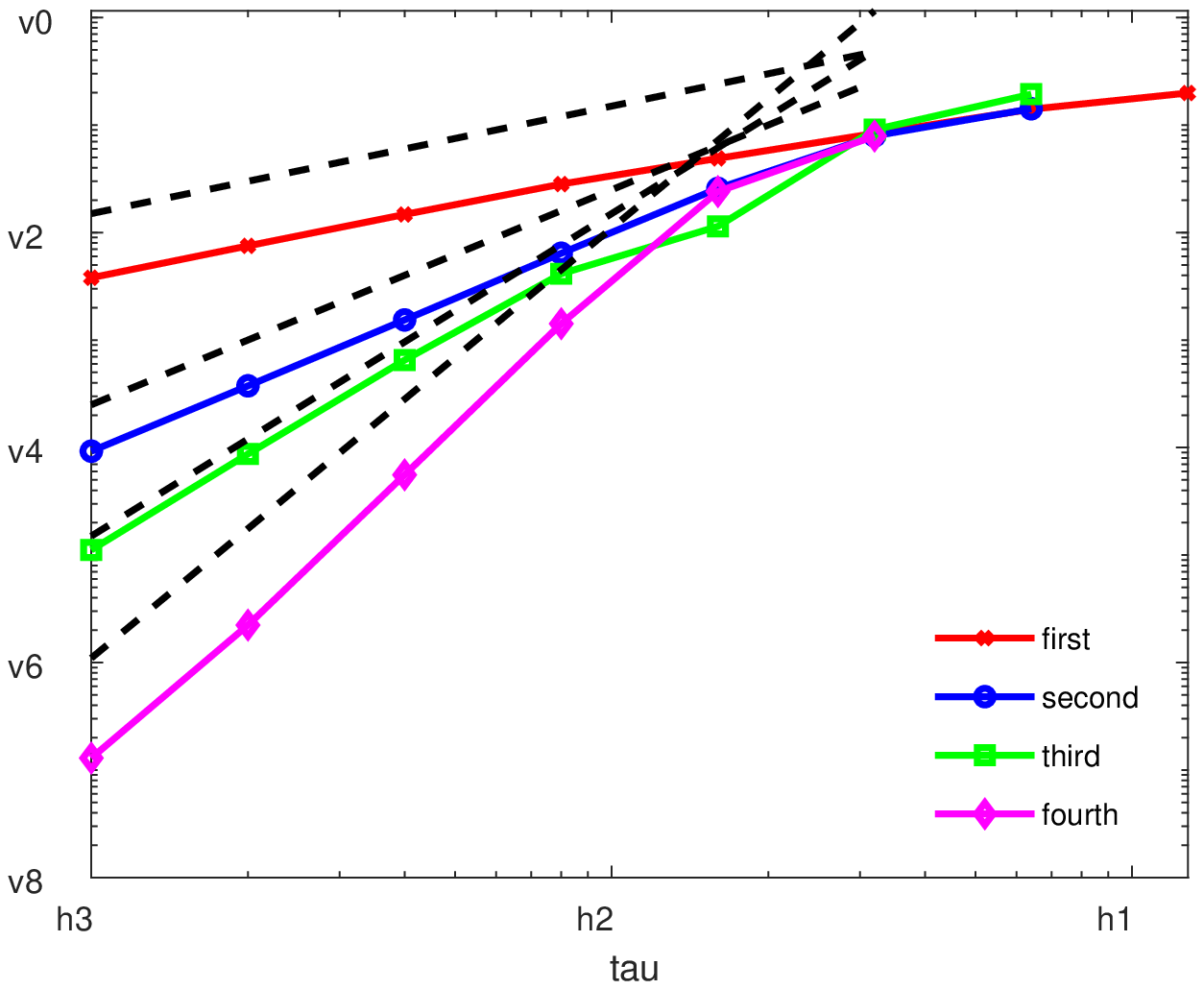}}\hspace{3.2mm}
		{
			\psfrag{v6}[][][.65]{\raisebox{0.2cm}{$\,10^{-6}$}}
			\psfrag{v5}[][][.65]{\raisebox{0.2cm}{$\,10^{-5}$}}
			\psfrag{v4}[][][.65]{\raisebox{0.2cm}{$\,10^{-4}$}}
			\psfrag{v3}[][][.65]{\raisebox{0.2cm}{$\,10^{-3}$}}
			\psfrag{v2}[][][.65]{\raisebox{0.2cm}{$\,10^{-2}$}}
			\psfrag{v1}[][][.65]{\raisebox{0.2cm}{$\,10^{-1}$}}
			\psfrag{v0}[][][.65]{\raisebox{0.2cm}{$\,\,10^0$}}
			\psfrag{v1plus}[][][.65]{\raisebox{0.2cm}{$\!\!\!\!\!10^1$}}
			\psfrag{h3}[][][.65]{\raisebox{0.2cm}{$\,\,\,\,\,\,\,\,10^{-3}$}}
			\psfrag{h2}[][][.65]{\raisebox{0.2cm}{$\,\,\,\,\,\,\,\,10^{-2}$}}
			\psfrag{h1}[][][.65]{\raisebox{0.2cm}{$\,\,\,\,\,\,\,\,10^{-1}$}}
			\psfrag{first}{\tiny{$\,k=1$}}
			\psfrag{second}{\tiny{$\,k=2$}}
			\psfrag{third}{\tiny{$\,k=3$}}
			\psfrag{fourth}{\tiny{$\,k=4$}}
			\psfrag{tau}{\tiny{timestep $\tau$}}
			\includegraphics[width=0.45\textwidth]{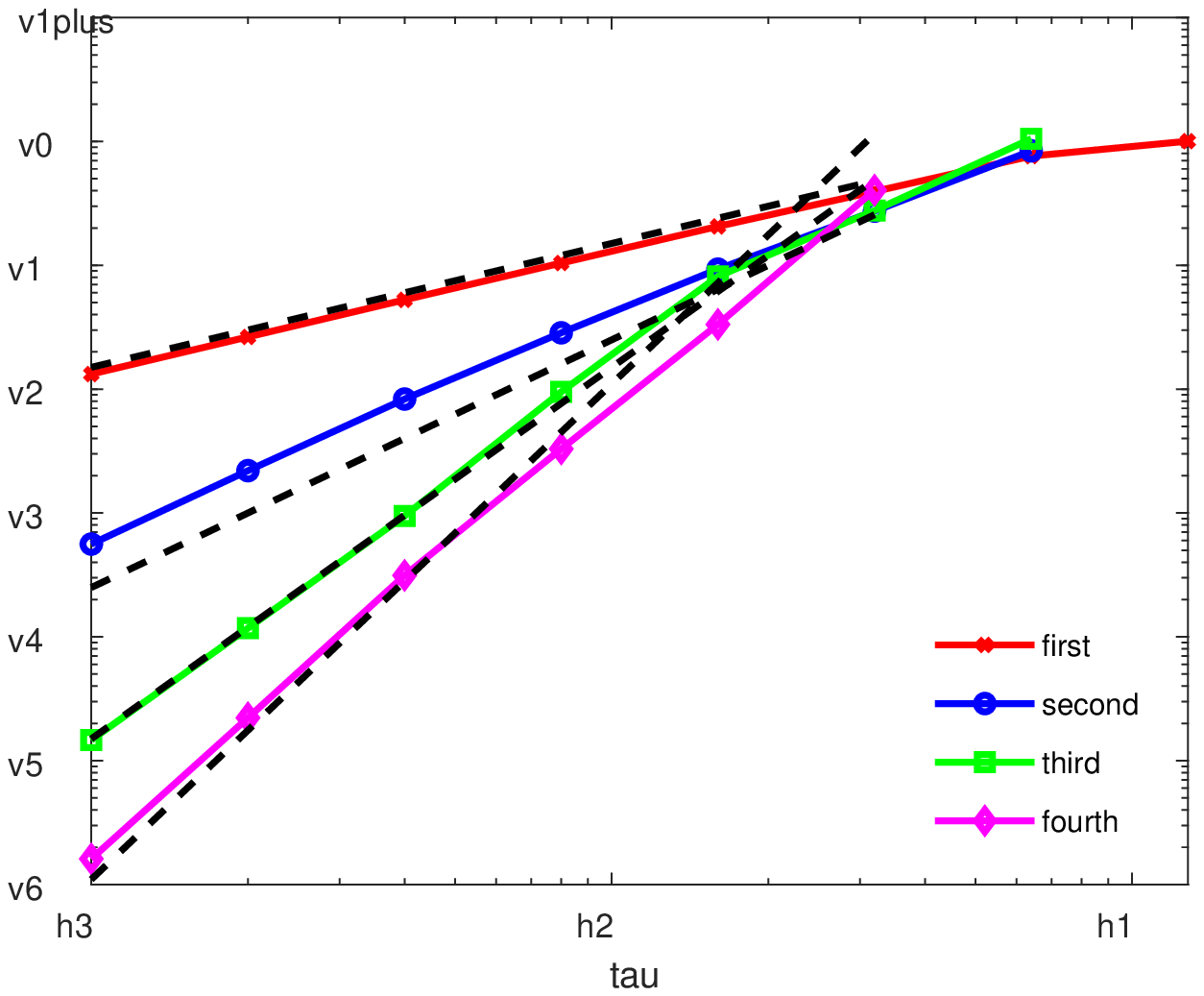}}
		\caption{The plots show the error between computed solutions and exact solution $\widetilde \m$ for a given time stepsize 
			with a spatial polynomial degree of $r=2$ and a spatial mesh size $1/40$ which results in $\approx 6\cdot 10^4$ degrees 
			of freedom per time step in the left plot. In the right plot we use a thicker domain $D=[0,1]\times [0,1]\times [0,1/4]$ with $10$ 
			elements in $z$-direction. This results in $\approx 4\cdot 10^5$ degrees of freedom per timestep. 
			We use the $k$-step methods of order $k\in\{1,2,3,4\}$ and observe the expected rates $\mathcal{O}(\tau^k)$ indicated by the dashed lines. 
			The coarse levels of the higher order methods are missing because the $k$th step is already beyond the final time $\bar t$.} 
		\label{fig:convtau}
	\end{figure}

	Finally, we consider an example with nonsmooth initial data and constant right-hand side. The initial data are given by
	\begin{equation}\label{eq:nonsmoothinit}
	\m_0(x):=\begin{pmatrix}
	x_1-1/2\\
	x_2-1/2\\
	\sqrt{ 1- d(x)}
	\end{pmatrix}\text{ if } d(x)\leqslant \frac 14\text{ and } \m_0(x):=\begin{pmatrix}0\\0\\1\end{pmatrix} \text{else.}
	\end{equation}
	With the constant forcing field $\bm{H}:=(0,1,1)^T$ we compute a numerical approximation to the unknown exact solution. 
	Note that we do not expect any smoothness of the solution (even the initial data is not smooth). Figure~\ref{fig:energy} 
	nevertheless shows a physically
	consistent decay of the energy $\|\nabla \m(t)\|_{L^2(\varOmega)^3}$ over time as well as a good agreement between 
	different orders of approximation. Moreover, the computed approximation shows little deviation from unit length as would 
	be expected for smooth solutions.
	\begin{figure} 
		\psfrag{first}{\tiny{$\ r=1$}}
		\psfrag{second}{\tiny{$\ r=2$}}
		\psfrag{third}{\tiny{$\ r=3$}}
		\psfrag{fourth}{\tiny{$\ r=4$}}
		\psfrag{h}{\tiny{meshsize $h$}} 
		\psfrag{v4}[][][.65]{\raisebox{0.2cm}{$\,\,\,\,10^{-4}$}}
		\psfrag{v3}[][][.65]{\raisebox{0.3cm}{$\,\,\,\,10^{-3}$}}
		\psfrag{v2}[][][.65]{\raisebox{0.3cm}{$\,\,\,\,10^{-2}$}}
		\psfrag{v1}[][][.65]{\raisebox{0.3cm}{$\,\,\,\,10^{-1}$}}
		\psfrag{v0}[][][.65]{\raisebox{0.3cm}{$\,\,\, \,10^0$}}
		\psfrag{h1}[][][.65]{\raisebox{0.2cm}{$\ \ \ \, 10^{-1}$}}
		\includegraphics[width=0.6\textwidth]{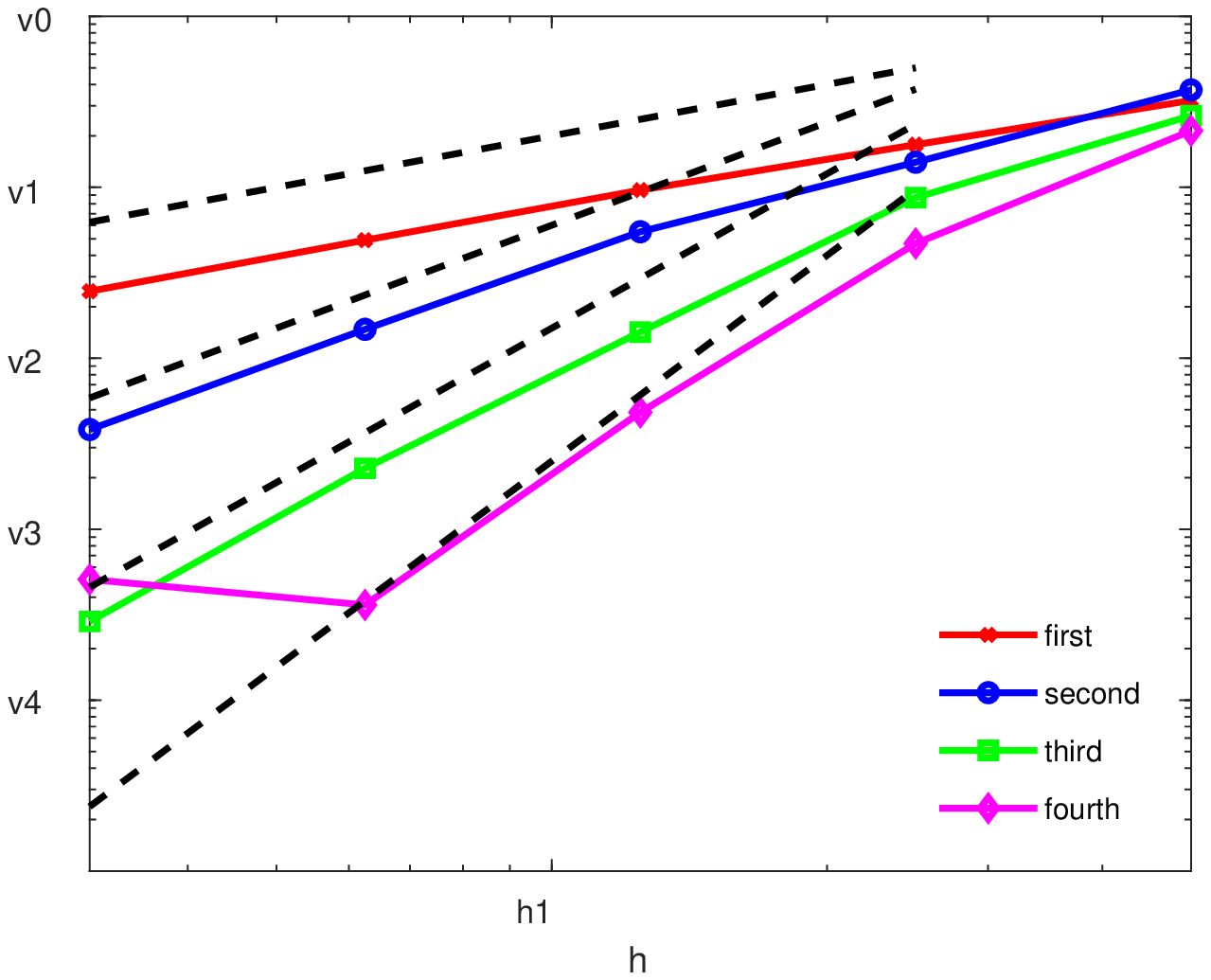}
		\caption{The plot shows convergence in meshsize $h$ with respect to the exact solution $\m$ from~\eqref{eq:exactsol1} 
			on the domain $D=[0,1]\times [0,1]\times [0,1/100]$ with one layer of elements in $z$-direction. We used the second order 
			BDF method with $\tau=10^{-3}$ and spatial polynomial degrees $r\in\{1,2,3,4\}$. The mesh sizes range from $1/2$ to $1/32$. 
			We observe the expected rates $\mathcal{O}(h^r)$ indicated by the dashed lines. The finest mesh-size for $r=4$ does reach 
			the expected error level. This is due to the fact that the time-discretization errors start to dominate in that region.}
		\label{fig:convh}
	\end{figure}
	\begin{figure}
		{
			\psfrag{v15}[][][.65]{$\,\,0.15$}
			\psfrag{v14}[][][.65]{$\,\,0.14$}
			\psfrag{v13}[][][.65]{$\,\,0.13$}
			\psfrag{v12}[][][.65]{$\,\,0.12$}
			\psfrag{v11}[][][.65]{$\,\,0.11$}
			\psfrag{v1}[][][.65]{$\,\,\,0.1$}
			\psfrag{v09}[][][.65]{$\,\,0.09$}
			\psfrag{v08}[][][.65]{$\,\,0.08$}
			\psfrag{v07}[][][.65]{$\,\,0.07$}
			\psfrag{v06}[][][.65]{\raisebox{0.1cm}{$\,\,0.06$}}
			\psfrag{v05}[][][.65]{\raisebox{0.1cm}{$\,\,0.05$}}
			\psfrag{h0}[][][.65]{\raisebox{0.1cm}{$0$}}
			\psfrag{h05}[][][.65]{\raisebox{0.1cm}{$\,\,\,\,\,0.05$}}
			\psfrag{h1}[][][.65]{\raisebox{0.1cm}{$\,\,\,\,0.1$}}
			\psfrag{h15}[][][.65]{\raisebox{0.1cm}{$\,\,\,\,\,0.15$}}
			\psfrag{h2}[][][.65]{\raisebox{0.1cm}{$\,\,\,0.2$}}
			\psfrag{time}{\tiny{time}}
			\psfrag{1}{\tiny{$\ r=k=1$}}
			\psfrag{2}{\tiny{$\ r=k=2$}}
			\psfrag{3}{\tiny{$\ r=k=3$}}
			\psfrag{4}{\tiny{$\ r=k=4$}}
			\psfrag{firstfirstfirstfirst}{\tiny{$\ r=k=1$}}
			\psfrag{second}{\tiny{$\ r=k=2$}}
			\psfrag{third}{\tiny{$\ r=k=3$}}
			\psfrag{fourth}{\tiny{$\ r=k=4$}}
			\includegraphics[width=0.45\textwidth]{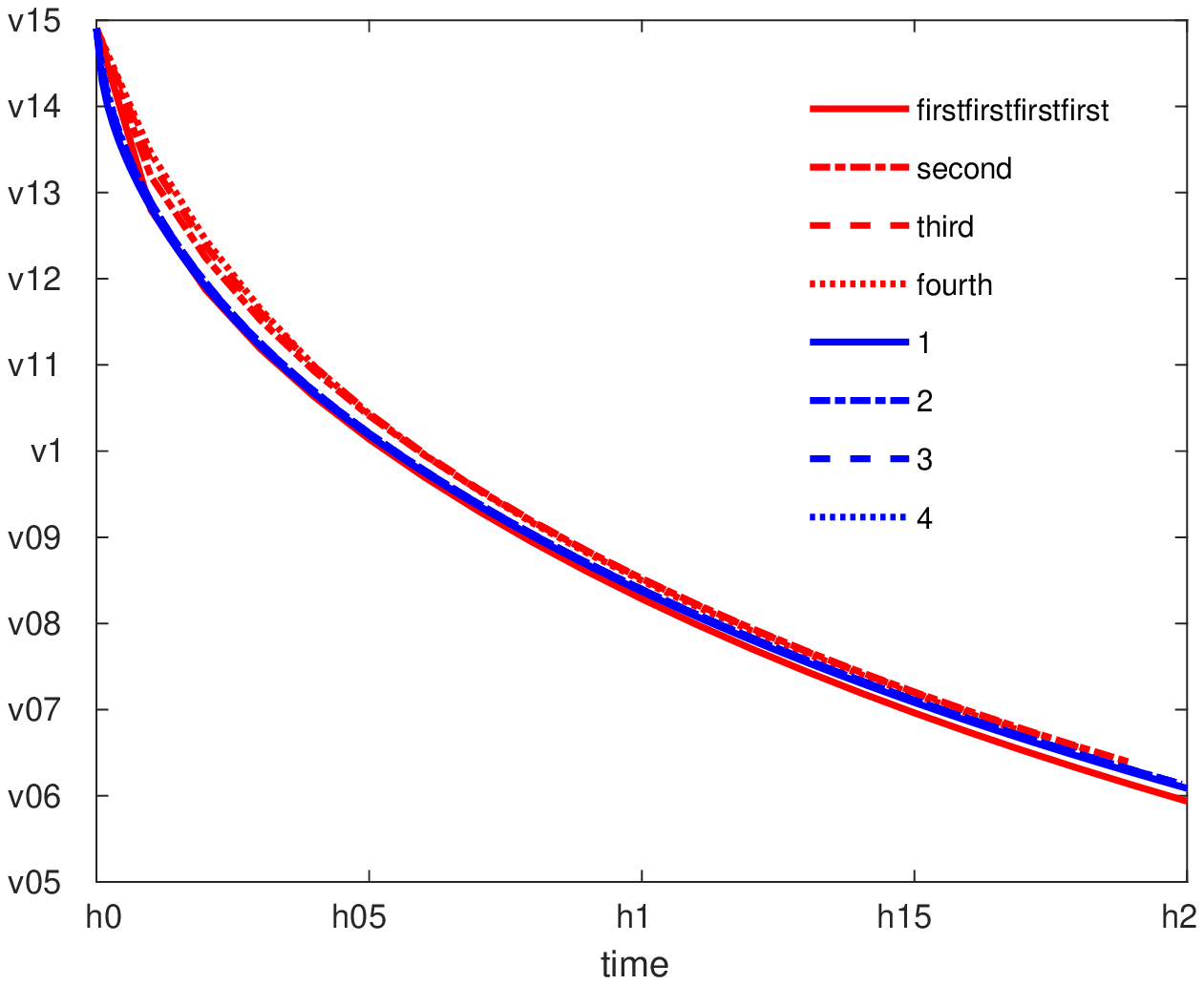}}\hspace{5mm}
		{ \psfrag{v1}[][][.65]{\raisebox{0.2cm}{$\,\,\,10^{-1}$}}
			\psfrag{v2}[][][.65]{\raisebox{0.2cm}{$\,\,\,10^{-2}$}}
			\psfrag{v3}[][][.65]{\raisebox{0.2cm}{$\,\,\,10^{-3}$}}
			\psfrag{h0}[][][.65]{\raisebox{0.1cm}{$0$}}
			\psfrag{h05}[][][.65]{\raisebox{0.1cm}{$\,\,\,\,0.05$}}
			\psfrag{h1}[][][.65]{\raisebox{0.1cm}{$\,\,\,\,0.1$}}
			\psfrag{h15}[][][.65]{\raisebox{0.1cm}{$\,\,\,\,\,0.15$}}
			\psfrag{h2}[][][.65]{\raisebox{0.1cm}{$\,\,\,0.2$}}
			\psfrag{time}{\tiny{time}}
			\includegraphics[width=0.45\textwidth]{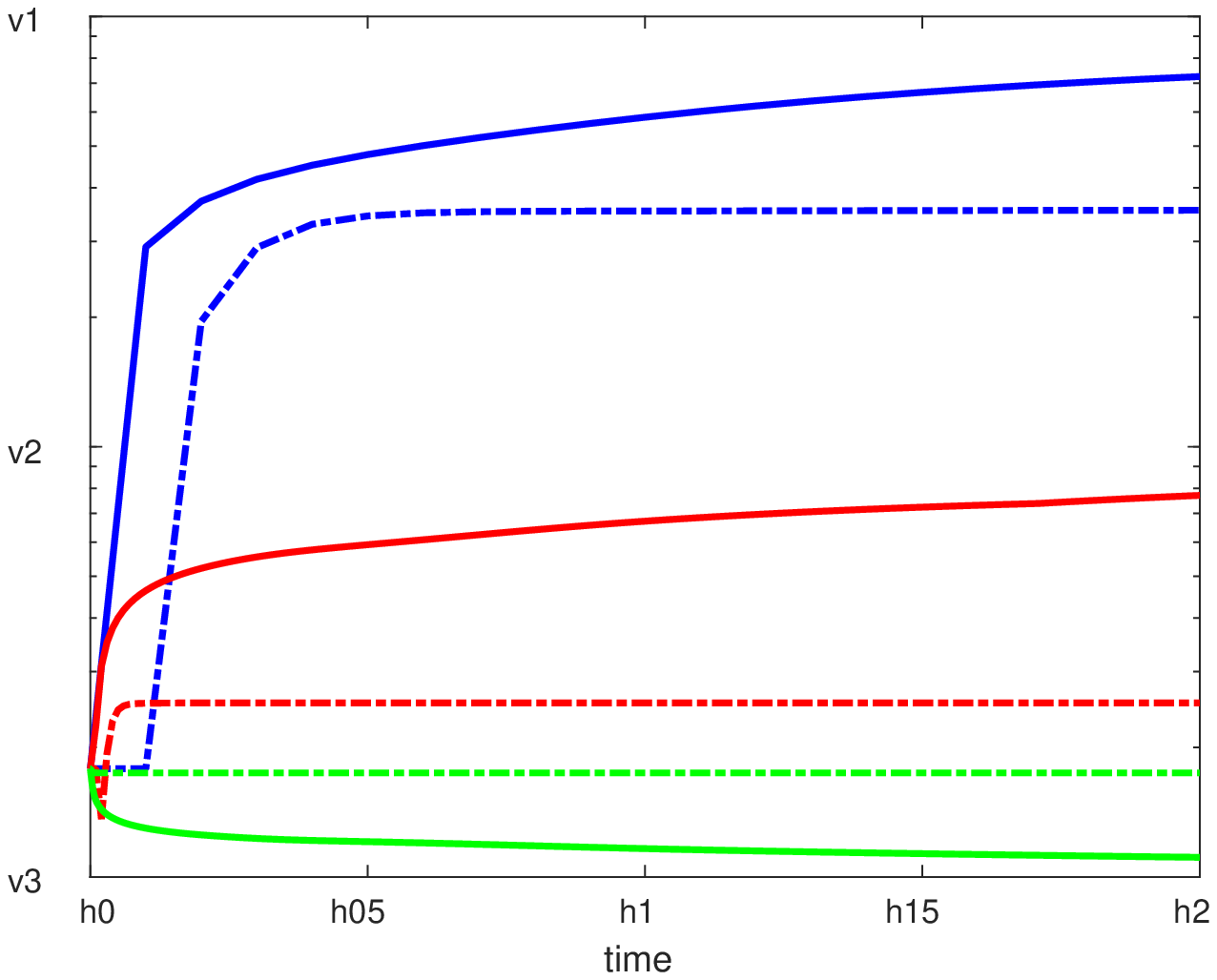}}
		\caption{Left plot: Decay of energies $\|\nabla \m(t)\|_{L^2(\varOmega)^3}$ for the approximations to the unknown solution with $\m_0$ 
			and $\bm{H}$  given in \eqref{eq:nonsmoothinit} and one line after~\eqref{eq:nonsmoothinit}. We plot four approximations of the $k$-step 
			method with polynomial degree $r$ for $r=k\in\{1,2,3,4\}$. 
			The spatial mesh-size is $1/40$ and the size of the timesteps is $10^{-3}$ (blue) and $10^{-2}$ (red). Right plot: Deviation from unit 
			length $\|1-|\m(t)|^2\|_{L^\infty(\Omega)}$ plotted over time for step sizes $\tau =10^{-2}$ (blue), $\tau=10^{-3}$ (red), and $\tau = 10^{-4}$ (green). 
			The solid lines indicate $k=1$, whereas the dashed lines indicate $k=2$. The spatial mesh-size is $1/40$ with $r=1$.}
		\label{fig:energy}
	\end{figure}

	\section{Appendix: Energy estimates for backward difference formulae}
	
	The stability proofs of this paper rely on energy estimates, that is, on the use of positive definite bilinear forms to bound 
	the error $e$ in terms of the defect $d$. This is, of course, a basic technique for studying the time-continuous problem 
	and also for backward Euler and Crank--Nicolson time discretizations (see, e.g., Thom\'ee \cite{Thomee}), but energy 
	estimates  still appear to be not well known for backward difference formula (BDF) time discretizations of order up to $5$, 
	which are widely used for solving stiff ordinary differential equations. To illustrate the basic mechanism, we here just consider 
	the prototypical linear parabolic evolution equation in its weak formulation, given by two positive definite symmetric bilinear 
	forms $(\cdot,\cdot)$ and $a(\cdot,\cdot)$ on Hilbert spaces $H$ and $V$ with induced norms $|\cdot|$ and $\|\cdot\|$, 
	respectively, and with $V$ densely and continuously embedded in $H$. The problem then is to find $u(t)\in V$ such that
	\begin{equation}\label{lin-par}
	(\partial_t u,v) + a(u,v) = (f,v) \qquad\forall v\in V,
	\end{equation}
	with initial condition $u(0)=u_0$. If $u^\star$ is a function that satisfies the equation up to a defect $d$, that is,
	\[
	(\partial_t u^\star,v) + a(u^\star,v) = (f,v) +(d,v)\qquad\forall v\in V,
	\]
	then the error $e=u-u^\star$ satisfies, in this linear case, an equation of the same form,
	\[
	(\partial_t e,v) + a(e,v) = (d,v) \qquad\forall v\in V,
	\]
	with initial value $e_0=u_0-u_0^\star$. Testing with $v=e$ yields 
	\[
	\frac12 \frac{\d}{\d t} |e|^2 + \|e\|^2 = (d,e).
	\]
	Estimating the right-hand side by $(d,e)\leqslant \|d\|_\star \, \|e\|\leqslant \tfrac12 \|d\|_\star^2 + \tfrac12 \|e\|^2$, with the dual norm $\|\cdot\|_\star$, 
	and integrating from time $0$ to $t$ results in the error bound
	\[
	|e(t)|^2 \leqslant |e(0)|^2 + \int_0^t \| d(s) \|_\star^2 \, \d s.
	\]
	On the other hand, testing with $v=\partial_t e$ yields
	\[
	|\partial_t e|^2 + \frac12 \frac{\d}{\d t} \| e \|^2 =(d,\partial_t e),
	\]
	which leads similarly to the error bound
	\[
	\|e(t)\|^2 \leqslant \|e(0)\|^2 + \int_0^t | d(s) |^2 \, \d s.
	\]
	This procedure is all-familiar, but it is not obvious how to extend it to time discretizations beyond the backward Euler 
	and Crank--Nicolson methods. The use of energy estimates for BDF methods relies on the following remarkable results.

	\begin{lemma}{\upshape (Dahlquist \cite{D}; see also \cite{BC} and \cite[Section V.6]{HW})}
		\label{lemma:Dahlquist}
		Let $\delta(\zeta) =\delta_k\zeta^k+\dotsb+\delta_0$ and 
		$\mu(\zeta)=  \mu_k\zeta^k+\dotsb+\mu_0$ be polynomials of degree at 
		most $k\ ($and at least one of them of degree $k)$
		that have no common divisor. 
		Let $(\cdot,\cdot)$ be an inner product with associated norm $|\cdot|.$
		If
		\[\Real \frac {\delta(\zeta)}{\mu(\zeta)}>0\quad\text{for }\, |\zeta|<1,\]
		then there exists a positive definite symmetric matrix $G=(g_{ij})\in \R^{k\times k}$ 
		such that for $v_0,\dotsc,v_k$ in 
		the real inner product space,
		\begin{equation*}
		\Big (\sum_{i=0}^k\delta_iv_{k-i},\sum_{j=0}^k\mu_jv_{k-j}\Big ) \geqslant
		\sum_{i,j=1}^kg_{ij}(v_{i},v_{j})
		-\sum_{i,j=1}^kg_{ij}(v_{i-1},v_{j-1}).
		\end{equation*}
	\end{lemma}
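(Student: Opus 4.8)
The plan is to reduce the statement to an inequality about real numbers, to realise the scalar inequality as a telescoping ``energy'' identity produced by a spectral factorisation tied to the hypothesis $\Real(\delta/\mu)>0$, and finally to deduce positive definiteness of the resulting matrix from the coprimality of $\delta$ and $\mu$. First I would reduce to the scalar case: by bilinearity of $(\cdot,\cdot)$, both sides of the asserted inequality are expressions of the form $\sum_{p,q}c_{pq}(v_p,v_q)$ whose coefficients $c_{pq}$ coincide with those occurring when the $v_j$ are real numbers. Hence it suffices to find a symmetric positive definite $G$ for which the $(k+1)\times(k+1)$ symmetric matrix of the quadratic form $x\mapsto\big(\sum_i\delta_ix_{k-i}\big)\big(\sum_j\mu_jx_{k-j}\big)-\sum_{i,j=1}^kg_{ij}x_ix_j+\sum_{i,j=1}^kg_{ij}x_{i-1}x_{j-1}$ is positive semidefinite; writing that matrix as $\sum_\ell\lambda_\ell u^{(\ell)}(u^{(\ell)})^{T}$ with $\lambda_\ell\ge0$ and substituting vectors for the variables turns the left minus the right side into $\sum_\ell\lambda_\ell\big|\sum_pu^{(\ell)}_pv_p\big|^2\ge0$, which is exactly the claimed inequality.

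Next I would perform the spectral factorisation. Since $\delta$ and $\mu$ have no common divisor, a zero of $\mu$ inside the open unit disk would be a genuine pole of $\delta/\mu$ and violate $\Real(\delta/\mu)>0$; hence $\mu$ is zero‑free there, $\delta/\mu$ is holomorphic with positive real part on $|\zeta|<1$, and by continuity $2\,\Real\big(\delta(\zeta)\overline{\mu(\zeta)}\big)=|\mu(\zeta)|^2\cdot2\,\Real\big(\delta(\zeta)/\mu(\zeta)\big)\ge0$ for $|\zeta|=1$. On the circle this is a real, symmetric, nonnegative trigonometric polynomial of degree $\le k$, so the Fejér--Riesz theorem yields a real polynomial $\chi$ of degree $\le k$ with no zero in $|\zeta|<1$ such that, as an identity of Laurent polynomials, $\delta(\zeta)\mu(1/\zeta)+\delta(1/\zeta)\mu(\zeta)=\chi(\zeta)\chi(1/\zeta)$. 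Extracting the constant coefficient of the product of this identity with $\big(\sum_nv_n\zeta^n\big)\big(\sum_nv_n\zeta^{-n}\big)$ gives the Parseval‑type identity $2\sum_n(\delta*v)_n(\mu*v)_n=\sum_n(\chi*v)_n^2$ for every finitely supported real sequence $v$, where $(p*v)_n:=\sum_ip_iv_{n-i}$.

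I would then read off $G$ from a per‑step version of this identity. One checks that there is a unique symmetric matrix $\widehat G=(\widehat g_{ij})_{i,j=1}^k$ such that, for each $n$, the identity of quadratic forms in $(v_{n-k},\dots,v_n)$
\[
2(\delta*v)_n(\mu*v)_n=(\chi*v)_n^2+\|V_n\|_{\widehat G}^2-\|V_{n-1}\|_{\widehat G}^2,\qquad V_n:=(v_{n-k+1},\dots,v_n),
\]
holds: its ``interior'' coefficient equations determine $\widehat G$ recursively from the last row and column inward along the diagonals, and the consistency of the remaining first‑row equations is forced by summing the displayed identity over $n$ and comparing with the Parseval identity (the ``defect'' between the two sides, being supported only in the first row and column, must vanish). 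Taking $n=k$ and discarding the nonnegative term $(\chi*v)_k^2$ gives the scalar inequality of the first paragraph with $G=\tfrac12\widehat G$, and hence, by the reduction, the lemma — once $\widehat G$ is known to be positive definite.

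The hard part is exactly this last point. Summing the per‑step identity from $n=k$ onward shows that $\|\xi\|_{\widehat G}^2$ equals $\sum_{n\ge k}(\chi*v)_n^2-2\sum_{n\ge k}(\delta*v)_n(\mu*v)_n$ for \emph{every} finitely supported extension $v$ of a prescribed initial state $\xi=(v_0,\dots,v_{k-1})$, so that $\widehat G$ is the ``storage function'' of the linear difference system governed by $\delta$ and $\mu$, and the nonnegativity and strict positivity of this storage function are precisely a dissipation inequality for a passive (positive‑real) system: positive \emph{semi}definiteness reflects the A‑stability condition just established, while strict positive definiteness uses that the coprimality of $\delta$ and $\mu$ makes the realisation minimal — a nontrivial null vector of $\widehat G$ would generate a nonzero trajectory of vanishing supply, which unravels into a nonconstant common factor of $\delta$ and $\mu$. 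Equivalently, one may invoke the discrete‑time Kalman--Yakubovich--Popov (positive‑real) lemma for a minimal state‑space realisation, which delivers a positive definite $G$ together with the nonnegative slack term in one stroke. I expect the bookkeeping in the construction of $\widehat G$ and, above all, the verification of strict positive definiteness to be the only genuinely delicate steps; for a pair with a common factor only a positive semidefinite $G$ is available, which is why the coprimality hypothesis is essential there.
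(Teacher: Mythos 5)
The paper does not prove this lemma; it cites it as a classical result (Dahlquist; proofs in Baiocchi \& Crouzeix and in Hairer \& Wanner, Section V.6). Your proposal reconstructs, in effect, the Baiocchi--Crouzeix argument. The reduction to the scalar case, the Fej\'er--Riesz factorisation, the Parseval identity, and the recursive construction of $\widehat G$ from the per-step identity are all sound. (A more direct way to see the consistency of the remaining first-row/column coefficients: they are the sums of $R_{ij}:=\delta_i\mu_j+\delta_j\mu_i-\chi_i\chi_j$ along fixed $i-j$, which vanish because they are the Laurent coefficients of $\delta(\zeta)\mu(1/\zeta)+\delta(1/\zeta)\mu(\zeta)-\chi(\zeta)\chi(1/\zeta)\equiv 0$.)

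The genuine gap is the positive definiteness of $\widehat G$. Your forward-summed formula
\[
\|\xi\|^2_{\widehat G}=\sum_{n\geqslant k}(\chi*v)_n^2-2\sum_{n\geqslant k}(\delta*v)_n(\mu*v)_n
\]
is correct, but the minus sign means neither $\widehat G\succeq 0$ nor $\widehat G\succ 0$ can be read off from it, and ``positive semidefiniteness reflects the A-stability condition just established'' is an assertion, not an argument. Extracting a sign requires choosing an extension of $\xi$ that annihilates the second sum (for instance $(\mu*v)_n=0$ for $n\geqslant k$, which is possible since $\mu_0\neq 0$) and then justifying the passage to an $\ell^2$ limit --- straightforward when $\mu$ is zero-free on $|\zeta|\leqslant 1$, but requiring a further perturbation when $\mu$ has zeros on $|\zeta|=1$, which the hypotheses permit. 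The strict-definiteness step, too, is more delicate than your sketch: a null vector yields a trajectory annihilated for $n\geqslant k$ by both $\mu$ (by the chosen recursion) and $\chi$ (from the vanishing sum), and turning this into $v=0$ via coprimality of $\delta$ and $\mu$ still takes work, since $\chi$ and $\mu$ need not be coprime. Your appeal to the discrete KYP lemma is a legitimate high-level pointer, but it transfers rather than closes the gap: one must still (a) derive minimality of the realisation from coprimality of $\delta$ and $\mu$, and (b) reconcile the KYP solution (which satisfies a matrix inequality with an unspecified slack) with the $\widehat G$ that is uniquely determined by your per-step equality with the Fej\'er--Riesz slack. As written, the last step of your proposal is a plausible roadmap rather than a completed proof.
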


	In combination with the preceding result for the multiplier $\mu(\zeta)=1-\eta_k\zeta,$ 
	the following property of BDF methods up to order $5$ becomes important.
	
	\begin{lemma}{\upshape (Nevanlinna \& Odeh \cite{NO})}\label{lemma:NO}
		For $k\leqslant 5,$  there exists 
		$0\leqslant \eta_k<1$ such that 
		for
		$\delta (\zeta)= \sum_{ \ell=1}^k \frac 1 \ell  (1-\zeta)^ \ell$, 
		\[\Real \frac {\delta(\zeta)}{1-\eta_k\zeta}>0\quad\text{for }\, |\zeta|<1.\]
		The smallest possible values of $\eta_k$ are
		\[\eta_1=\eta_2=0,\ \eta_3=0.0836,\ \eta_4=0.2878,\ \eta_5=0.8160.
		\]
	\end{lemma}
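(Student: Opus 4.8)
The plan is to reduce the claim, via the minimum principle for harmonic functions, to an elementary inequality on the boundary circle $|\zeta|=1$, and then to verify that inequality for each order $k\leqslant 5$ with the stated multiplier $\eta_k$.

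Since $0\leqslant\eta_k<1$, the rational function $g_k(\zeta):=\delta(\zeta)/(1-\eta_k\zeta)$ is holomorphic on the closed unit disk $\overline D$, where $D:=\{\zeta:|\zeta|<1\}$ (its only pole is at $1/\eta_k\notin\overline D$; for $\eta_k=0$ it is the polynomial $\delta$). Hence $\Real g_k$ is harmonic in $D$ and continuous on $\overline D$, so $\min_{\overline D}\Real g_k=\min_{\partial D}\Real g_k$. Once we have verified $\Real g_k\geqslant 0$ on $\partial D$ (with equality at $\zeta=1$, because $\delta(1)=0$), this gives $\Real g_k\geqslant 0$ on $\overline D$; and if $\Real g_k$ vanished at an interior point it would be identically zero by the strong minimum principle, forcing $g_k$ to be constant (a holomorphic function with constant real part is constant), which it is not. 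Thus $\Real g_k>0$ in $D$. Writing $\zeta=\e^{\i\theta}$ and using $|1-\eta_k\e^{\i\theta}|^2=1-2\eta_k\cos\theta+\eta_k^2>0$, it therefore suffices to prove
\[
P_k(\theta):=\Real\bigl[\delta(\e^{\i\theta})\,(1-\eta_k\e^{-\i\theta})\bigr]\geqslant 0,\qquad \theta\in[0,2\pi].
\]
Because $\delta$ has real coefficients, $P_k$ is an even trigonometric polynomial, hence equals $p_k(\cos\theta)$ for a real polynomial $p_k$ of degree at most $k$; since $P_k(0)=0$ one may factor out $(1-\cos\theta)$, so the claim reduces to a polynomial inequality $q_k(x)\geqslant 0$ for $x\in[-1,1]$.

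For $k=1,2$ one takes $\eta_k=0$; then $\Real\delta(\zeta)>0$ on $D$ is precisely the classical A-stability of the first- and second-order BDF methods, and the corresponding low-degree polynomial $q_k$ is nonnegative on $[-1,1]$ by inspection. For $k=3,4,5$ I would insert the stated values of $\eta_k$ and verify $q_k\geqslant 0$ on $[-1,1]$ by a finite computation: isolate the real roots of $q_k$ (by a Sturm sequence or by interval arithmetic) and check that none lies in $(-1,1)$ and that $q_k\geqslant 0$ at the endpoints. For the ``smallest possible'' statement one keeps $\eta$ free in the family $q(\,\cdot\,;\eta)$: then $\eta_k$ is the infimum of those $\eta\in[0,1)$ with $q(\,\cdot\,;\eta)\geqslant 0$ on $[-1,1]$, and at $\eta=\eta_k$ the minimum is $0$, attained at some $x_0\in[-1,1]$ with $q(x_0;\eta_k)=0$ and, if $x_0$ is interior, $\partial_x q(x_0;\eta_k)=0$; solving this double-contact system for $(x_0,\eta_k)$ produces the listed numerical values, while for any smaller $\eta$ the minimum becomes negative.

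The main obstacle is the explicit verification for $k=4$ and, above all, $k=5$: there $\eta_k$ is only an approximate four-digit number, the polynomials $q_k$ have unwieldy coefficients, and the point where $\Real g_k$ touches zero is a near-double root of $q_k$, so a naive numerical evaluation is delicate — and for $k=5$ the value $\eta_5\approx 0.816$ is moreover uncomfortably close to $1$, where $g_5$ has its (excluded) pole. Making the argument rigorous requires either exact computation with the algebraic numbers involved or carefully controlled interval arithmetic, i.e.\ one must certify that $\eta\mapsto\min_{[-1,1]}q(\,\cdot\,;\eta)$ changes sign exactly at the stated $\eta_k$.
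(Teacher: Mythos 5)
The paper does not prove this lemma; it is quoted from Nevanlinna \& Odeh \cite{NO}, with the exact algebraic values of the optimal multipliers credited to Akrivis \& Katsoprinakis \cite{AK}, so there is no in-paper argument to compare against. Your sketch follows the natural route and is in the same spirit as those references: since $g_k=\delta/(1-\eta_k\zeta)$ is holomorphic on the closed unit disk (the pole at $1/\eta_k$ lies outside), harmonicity of $\Real g_k$ reduces the claim to $|\zeta|=1$, where, after dividing out the factor $1-\cos\theta$ that vanishes because $\delta(1)=0$, one obtains a polynomial inequality $q_k(x;\eta_k)\geqslant 0$ for $x\in[-1,1]$; the strong minimum principle, together with the observation that $g_k$ is nonconstant, upgrades nonnegativity on the closed disk to the claimed strict positivity on the open disk; and the optimal $\eta_k$ is characterized by the double-contact condition $q_k(x_0;\eta_k)=\partial_x q_k(x_0;\eta_k)=0$. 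All of this is sound.

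What you do not carry out — and you honestly flag this — is the actual verification for $k=3,4,5$, and that is where the real content lies. The printed $\eta_k$ are four-digit roundings of algebraic numbers, and precisely because the optimum is where $q_k(\,\cdot\,;\eta_k)$ acquires a double root on $[-1,1]$, a naive floating-point check near that root cannot distinguish ``touches zero from above'' from ``dips slightly below zero''; rigor requires either the closed-form algebraic multipliers from \cite{AK} or validated interval arithmetic, as you say. One small correction: your worry that $\eta_5\approx 0.816$ is ``uncomfortably close to the pole of $g_5$'' is misplaced, since that pole sits at $1/\eta_5\approx 1.23$, safely outside the closed unit disk; the only delicacy for $k=5$ is the near-tangency of $q_5$ to zero at the contact point, not any analytic singularity.
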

	
	Precise expressions for the optimal multipliers for the BDF methods of orders $3, 4$ and $5$ are given by Akrivis \& Katsoprinakis \cite{AK}.
	
	An immediate consequence of Lemma \ref{lemma:NO} and Lemma \ref{lemma:Dahlquist} is the
	relation
	\begin{equation}
	\label{multiplier}
	\Big (\sum_{i=0}^k\delta_iv_{k-i},v_k-\eta_k v_{k-1}\Big )\geqslant
	\sum_{i,j=1}^kg_{ij}(v_{i},v_{j})
	-\sum_{i,j=1}^kg_{ij}(v_{i-1},v_{j-1})
	\end{equation}
	with a positive definite symmetric matrix $G=(g_{ij})\in \R^{k\times k}$;  
	it is this inequality  that plays a crucial role in our energy estimates, and the same inequality for the inner product $a(\cdot,\cdot)$. \medskip
	
	The error equation for the BDF time discretization of the linear parabolic problem \eqref{lin-par} reads
	\[
	(\dot e^n, v ) + a(e^n,v) = (d^n,v) \qquad\forall v\in V,\qquad\text{where }\quad \dot e^n = \frac1\tau\sum_{j=0}^k \delta_j e^{n-j},
	\]
	with starting errors $e^0,\dotsc,e^{k-1}$. When we test with $v=e^n-\eta_k e^{n-1}$, the first term can be estimated 
	from below by \eqref{multiplier}, the second term is bounded from below by $ (1-\tfrac12\eta_k) \|e^n\|^2 - \tfrac12\eta_k \|e^{n-1}\|^2 $, 
	and the right-hand term is estimated from above by the Cauchy-Schwarz inequality. Summing up from $k$ to $n$
	then yields the error bound
	\begin{equation}
	\label{e-test}
	| e^n |^2 + \tau \sum_{j=k}^n \| e^j \|^2 \leqslant C_k \Bigl(\sum_{i=0}^{k-1} \bigl(|e^i|^2 + \tau \| e^i \|^2 \bigr) + \tau \sum_{j=k}^n \|d^j\|_\star^2\Bigr),
	\end{equation}
	where $C_k$ depends only on the order $k$ of the method. 
	This kind of estimate for the BDF error has recently been used for a variety of linear and nonlinear parabolic problems \cite{LMV13,AL15,ALL17,KL17}.  
	
	On the other hand, when we first subtract $\eta_k$ times the error equation for $n-1$ from the error equation with $n$ and then test 
	with $\dot e^n$, we obtain
	\[
	(\dot e^n -\eta_k \dot e^{n-1}, \dot e^n) + a( e^n -\eta_k e^{n-1},\dot e^n) = (d^n -\eta_k d^{n-1}, \dot e^n).
	\]
	Here, the second term is bounded from below by \eqref{multiplier} with the $a(\cdot,\cdot)$ inner product, the first term 
	is bounded from below by $ (1-\tfrac12\eta_k) |\dot e^n|^2 - \tfrac12\eta_k |\dot e^{n-1}|^2 $, and the right-hand term is 
	estimated from above by the Cauchy--Schwarz inequality. Summing up from $k$ to $n$
	then yields the error bound
	\begin{equation}\label{edot-test}
	\| e^n \|^2 + \tau \sum_{j=k}^n | \dot e^j |^2 \leqslant C_k \Bigl(\sum_{i=0}^{k-1} \|e^i\|^2 + \tau \sum_{j=k}^n |d^j|^2\Bigr).
	\end{equation}
	It is this type of estimate that we use in the present paper for the nonlinear problem considered here. It has previously been used in \cite{KLL18}.

	\subsection*{Acknowledgment}
	The work of Michael Feischl, Bal\'azs Kov\'acs and Christian Lubich is supported by Deutsche Forschungsgemeinschaft -- Project-ID 258734477 -- SFB 1173.

	\bibliographystyle{amsplain}

\end{document}